\author{L. Leh\'ericy\footnote{Laboratoire de Math\'ematiques d'Orsay, Univ. Paris-Sud, CNRS, Universit\'e Paris-Saclay, 91405 Orsay, France.} , A. Touron$^*$\footnote{EDF R\&D}}
\title{Consistency of the maximum likelihood estimator in hidden Markov models with trends}
\date{}
\newcommand{\xfarT}{{x^*_\Tcal}}
\newcommand{\xfarU}{{x_\Ucal}}
\newcommand{\Kast}{{K^*}}
\newcommand{\xast}{{x^*}}
\newcommand{\Xcaltheta}{{[K^\theta]}}
\newcommand{\ThetaOK}{\Theta^{\text{OK}}_n}
\begin{document}

\maketitle

\begin{abstract}
A hidden Markov model with trends is a hidden Markov model whose emission distributions are translated by a trend that depends on the current hidden state and on the current time.
Unlike standard hidden Markov models, such processes are not homogeneous and cannot be made homogeneous by a simple de-trending step.
We show that when the trends are polynomial, the maximum likelihood estimator is able to recover the trends together with the other parameters and is strongly consistent.
More precisely, the supremum norm of the difference between the true trends and the estimated ones tends to zero.
The convergence of the maximum likelihood estimator is illustrated on simulated data.
\end{abstract}

\noindent
\textbf{Keywords:} hidden Markov model, maximum likelihood, inhomogeneous process, consistency

\setcounter{tocdepth}{2}
 \tableofcontents

\section{Introduction}

A hidden Markov model (shortened HMM in the following) is a joint process $(X_t, Y_t)_{t \geq 1}$ where $(X_t)_{t \geq 1}$ is an (unobserved) Markov chain and conditionally on $(X_t)_{t \geq 1}$, the observations $(Y_t)_{t \geq 1}$ are independent and the distribution of $Y_s$ (the \emph{emission distribution}) depends only on $X_s$. In most generalizations of HMM, the joint process $(X_t, Y_t)_{t \geq 1}$ is still a Markov chain. We say that the process is \emph{inhomogeneous} when this chain is inhomogeneous, that is when the distribution of $(X_t,Y_t)$ conditionally to $(X_{t-1}, Y_{t-1})$ depends on $t$. In particular, in inhomogeneous HMM, the transition matrix and emission distributions may vary over time.



\bigskip

As far as we know, few inhomogeneous generalizations of HMM have been studied theoretically.
Let us review them before stating our contribution.
They can be divided in four categories.

\cite{diehn2018inhomogeneousHMM} focus on the case where a rapidly fading phenomenon affects the distribution of the observations. Their model is a trivariate process $(X_t, Y_t, Z_t)_{t \geq 1}$ where only $(Z_t)_{t \geq 1}$ is observed, such that $(X_t, Y_t)_{t \geq 1}$ is an homogeneous HMM and $(X_t, Z_t)_{t \geq 1}$ is an inhomogeneous HMM. Their key assumption is that the distance between $Z_t$ and $Y_t$ tends to zero fast enough when $t$ tends to infinity. In this sense, the process $(Z_t)_{t \geq 1}$ is a perturbation of the process $(Y_t)_{t \geq 1}$ by a rapidly fading inhomogeneous noise.
The main theoretical result of their article is that the estimators based on maximizing either the true likelihood $\theta \mapsto p_{Z_1^n | \theta}(Z_1^n)$ or a quasi-likelihood $\theta \mapsto p_{Y_1^n | \theta}(Z_1^n)$ are consistent (here $p_{Y_1^n | \theta}$ is the density of the vector $(Y_1, \dots, Y_n)$ under the parameter $\theta$).
This can be seen as a proof that the maximum likelihood estimator for homogeneous HMM is robust to a temporary perturbation of the data.

A second generalization has been considered by \cite{touron2018SHMM_consistency} to handle periodic phenomenon. The author introduces periodic hidden Markov models, where the transition matrix and the emission densities vary periodically over time. He shows that such models are identifiable under general assumptions and that the maximum likelihood estimator is consistent. 
The consistency proof relies on a transformation of the process into an homogeneous HMM. Models with periodic parameters are of particular interest in the study of meteorological phenomena. For instance, \cite{ailliot2012} use Markov-Switching Autoregressive (MS-AR) models with periodic emission distributions and transition matrices in order to account for both the daily and yearly seasonalities of wind speed.

Inhomogeneity can also be introduced in the process by considering that the transition probabilities of the hidden Markov chain depend on an exogeneous variable. \cite{bellone2000hidden} and~\cite{hughes1999} chose this approach to model precipitations, taking a vector of atmospheric variables as the exogeneous variable. However, no theoretical results on the consistency of the MLE are given in these papers.

Finally, we may assume that the transition probabilities or the emission densities of the HMM change over time in a way not encompassed by the previous cases.
For example, this allows to model phenomena with slow or long-term evolution, such as economic or meteorological data recordings spanning over several decades, as in \cite[Part II]{touron2019PhD}, where the author is interested in generating realistic temperature recordings over the last half century. To account for the global warming, he uses HMM with trends similar to the ones studied here. We could not find any theoretical result on this type of generalization outside of our paper.

Note that some models that are presented as time-inhomogeneous in the literature are actually homogeneous according to our definition.
This is the case of the Markov-switching models considered since the work of~\cite{hamilton89} and more recently by \cite{pouzo2016misspecified} and \cite{ailliot2015consistency} for instance. These models are a generalization of HMM where the hidden state $X_t$ depends both of the previous hidden state $X_{t-1}$ and on previous observations, let's say $Y_{t-1}$ for an order one model, and where the observation $Y_t$ depends both on the corresponding hidden state $X_t$ and on previous observations. The authors state that this model is ``non-homogeneous'' because the transition kernel of the hidden Markov chain depends on previous observations.
Our motivation for not calling these models inhomogeneous is that the joint process $(X_t, Y_t)_{t \geq 1}$ is a homogeneous Markov chain. As such, their proofs are based on the same approach as for HMM and other homogeneous generalizations such as the autoregressive models with Markov regime of~\cite{douc2004asymptotic}.


\bigskip

In this paper, we introduce a new inhomogeneous generalization: hidden Markov models with trends.
A HMM with trends is a trivariate process $(X_t, Y_t, Z_t)_{t \geq 1}$ taking values in $\Xcal \times \Rbb^d \times \Rbb^d$ for some finite set $\Xcal$ and some integer $d$ (which we assume to be 1 for the sake of simplicity) where only the process $(Y_t)_{t \geq 1}$ is observed, such that $(X_t, Z_t)_{t \geq 1}$ is a homogeneous HMM and that there exists a vector of functions $(T_x)_{x \in \Xcal}$ from $\Nbb^*$ to $\Rbb^d$, the \emph{trends}, such that
\begin{equation*}
Y_t = T_{X_t}(t) + Z_t.
\end{equation*}
Here, the trends are polynomial functions of time. As a consequence, they may diverge.

Adding trends to a HMM allows to deal with non periodic and non vanishing inhomogeneities. However, this makes existing approaches such as~\cite{douc2004asymptotic} inapplicable since they rely heavily on the homogeneity of the process. Our main contribution is a new method of proof that shows that the maximum likelihood estimator recovers the parameter of the homogeneous HMM $(X_t, Z_t)_{t \geq 1}$ as well as the trends with respect to the supremum norm, see Theorem~\ref{th_MLEconsistency}.

\bigskip
Let us give an overview of the proof of this theorem.

First, we introduce the notion of ``blocks". This notion relies solely on the true parameters. A block is a set of hidden states whose trends are equal up to translation. In particular, since the trends are polynomials, if two trends are not in the same block, they will eventually diverge from one another.

For each time $t$, let $B_t$ be the block of $X_t$. The block variables $(B_t)_{t \geq 1}$ are unobserved, yet the log-likelihood of the process $(Y_t)_{t \geq 1}$ is asymptotically the same as the log-likelihood of the process $(Y_t, B_t)_{t \geq 1}$, see Theorem~\ref{th_approx_blocs}: since trends of different blocks diverge, it eventually becomes apparent which block a given observation comes from.

Given a constant $M > 0$, we say that a function belongs to the tube of a block at time $n$ if its distance to the trends of this block remains bounded by $M$ during the first $n$ time steps.
In the following, the bound $M$ on the distance must not depend on $n$.

The second step of the proof shows that the trends of the maximum likelihood estimator end up in close proximity to the true trends: Theorem~\ref{th_localization} shows that eventually, the trends of the maximum estimator are all in the tube of a block and each tube contains at least one trend of the maximum likelihood estimator.

Let us give an idea of the proof of Theorem~\ref{th_localization}. When a tube contains no estimated trend, each observation from its block is far from the estimated trends, which inflicts a heavy loss on the likelihood. On the other hand, if an estimated trend is in no tube, then eventually it is far from all observations. We assume that the transition matrix is lower bounded, so each state should be visited a non-zero proportion of the time. However, when a trend is far from all observations, the posterior proportion of observations generated by the corresponding state tends to zero: each time this superfluous state isn't seen worsens the likelihood a little more, to the point that eventually, removing the trend improves the likelihood of the estimator. Thus, asymptotically, each estimated trend is in a tube and no tube is empty.

The third step is to de-trend the observations by substracting a representative $\Tbb^*_{B_t}$ of the trends of the block:
\begin{equation*}
Z'_t = Y_t - \Tbb^*_{B_t}(t).
\end{equation*}

The process $(X_t, (Z'_t, B_t))_{t \geq 1}$ obtained this way is also a HMM with trends, yet it is still not homogeneous under the parameter $\theta$ since the residual trends $T^\theta_x - \Tbb^*_{b}$ of $(Z'_t)_{t \geq 1}$ (where $b$ is the block that contains $T^\theta_x$), while bounded, are not constant. Fortunately, as the number of observations grow, these residual trends become ``flatter", because they are polynomials with bounded degree that are bounded on $[0,n]$. Thus, during small time intervals, the de-trended process behaves like a homogeneous HMM under the parameter $\theta$, which allows to explicitly compute the limit of the log-likelihood, see Theorem~\ref{th_conv_int_loglik}.
The identifiability of the model and the consistency of the maximum likelihood estimator follow from the properties of this limit.

Simulations based on two parametric models illustrate the consistency of the maximum likelihood estimator. Moreover, an empirical study of the rates of convergence suggests that it converges with parametric rates.

Section~\ref{sec_model} introduces the model and assumptions used in the following. Section~\ref{sec_main} presents our main result: the consistency of the maximum likelihood estimator (Theorem~\ref{th_MLEconsistency}), as well as an outline of its proof. The technical proofs are postponed to the appendices. Section~\ref{sec_simulations} contains the numerical study. Finally, Appendices A to D contain the proofs of the different steps of the consistency proof.

\paragraph*{Notation} For each positive integer $K$, $[K]$ denotes the set $\{1, \dots, K\}$. For $a \leq b$ integers, write $Y_a^b$ instead of $(Y_a, \dots, Y_b)$.

\section{Model and assumptions}
\label{sec_model}

Let $\Kast$ be a positive integer. Let $\gamma^* = (\gamma^*_{x})_{x \in [\Kast]}$ be a vector of probability densities on $\Rbb$ with respect to the Lebesgue measure. Let $(X_t)_{t \geq 1}$ be a Markov chain on $[\Kast]$ with transition matrix $Q^*$ and initial distribution $\pi^*$. For all $x \in [\Kast]$, let $(Z_t^{x})_{t \geq 1}$ be a sequence of i.i.d. random variables in $\Rbb$ such that these sequences are mutually independent and independent on $(X_t)_{t \geq 1}$ and such that for all $x \in [\Kast]$, $Z^{x}_1$ has density $\gamma^*_{x}$ with respect to the Lebesgue measure. Let $Z^{\max}_t = \max_{x \in [\Kast]} |Z_t^{x}|$ and $Z_t = Z_t^{X_t}$. Finally, let $T^* = (T^*_{x})_{x \in [\Kast]}$ be a family of functions $\Rbb_+ \longrightarrow \Rbb$ and let $Y_t = Z_t + T^*_{X_t}(t)$ for all integers $t \geq 1$. The $(T^*_{x})_{x \in [\Kast]}$ are called \emph{trends}.

The process $(X_t, Z_t)_{t \geq 1}$ is a homogeneous hidden Markov model with parameter $(\Kast, \pi^*, Q^*, \gamma^*)$ and $(X_t, Y_t)_{t \geq 1}$ is a hidden Markov model with trends with parameter $(\Kast, \pi^*, Q^*, \gamma^*, T^*)$.

\begin{remark}
The random variables $Z^{\max}_t$ are i.i.d. and independent of $(X_t)_{t \geq 1}$. They allow to bound $Z_t$ uniformly for all possible values of $X_t$.
\end{remark}

Consider a sample $(Y_1, \dots, Y_n)$ generated by a hidden Markov model with trends $(X_t, Y_t)_{t \geq 1}$ with parameter $\theta^* := (\Kast, \pi^*, Q^*, \gamma^*, T^*)$, which we call the \emph{true parameter}. The goal is to recover this parameter. In the following, we write $\Pbb^*$ the distribution of the process $(X_t, Y_t)_{t \geq 1}$ and $\Ebb^*$ the corresponding expectation.

\bigskip

Let $\sigma_- \in (0,1)$.
\begin{description}
\item[\textbf{(Aerg)}] A stochastic matrix $Q$ satisfies \textbf{(Aerg)} when all its coefficients are lower bounded by $\sigma_-$:
\begin{equation*}
\forall x,x' , \quad Q(x,x') \geq \sigma_-.
\end{equation*}
\end{description}

For each $K \in \Nbb^*$, let $\Sigma_K^{\sigma_-}$ be the set of stochastic matrices of size $K$ which satisfy \textbf{(Aerg)} and $\Delta_K$ be the set of probability vectors of size $K$. Let $\Gamma$ be a set of density functions on $\Rbb$ and for each $d \in \Nbb$, write $\Rbb_d[X]$ the set of polynomials whose degree is at most $d$.

Let $K$ and $d$ be positive integers. The model considered in this paper is
\begin{equation*}
\Theta = \bigcup_{K'=1}^K \left(
	\{[K']\}\times \Delta_{K'} \times \Sigma_{K'}^{\sigma_-} \times \Gamma^{K'} \times (\Rbb_d[X])^{K'} \right)
\end{equation*}

Each $\theta = (K^\theta, \pi^\theta, Q^\theta, \gamma^\theta, T^\theta) \in \Theta$ is a parameter of a HMM with trends. Write $\Pbb^\theta$ the distribution under the parameter $\theta$. Assume that the true parameter belong to the model: $\theta^* \in \Theta$. 

Endow $\Gamma$ with the pointwise convergence topology (i.e. $g_n\underset{n \rightarrow \infty}{\longrightarrow}g$ if and only if for all $z\in\Rbb$, $g_n(z)\underset{n \rightarrow \infty}{\longrightarrow}g(z)$). Assume that $\Gamma$ is compact.

We will use the following assumptions.

\begin{description}
\item[\textbf{(Amax)}] \textit{Envelope function.} There exists a nonincreasing function $g : \Rbb_+ \longrightarrow \Rbb_+$ such that $g \underset{+ \infty}{\longrightarrow} 0$ and
\begin{equation*}
\forall \gamma \in \Gamma \quad \forall z \in \Rbb \quad \gamma(z) \leq g(|z|).
\end{equation*}

\item[\textbf{(Amin)}] \textit{Lower bound function.} There exists a nonincreasing function $m : \Rbb_+ \longrightarrow \Rbb_+$ such that
\begin{equation*}
\forall \gamma \in \Gamma \quad \forall z \in \Rbb \quad \gamma(z) \geq m(|z|) > 0.
\end{equation*}

\item[\textbf{(Aint)}] \textit{Integrability of the lower bound function.} The function $m$ defined in \textbf{(Amin)} satisfies
\begin{equation*}
\forall M > 0 \quad \Ebb^* | \log m(M + Z^{\max}_t) | < \infty.
\end{equation*}

\item[\textbf{(Acentering)}] \textit{Centering of the emission densities.} 0 is a median of the emission densities, that is:
\begin{equation*}
\forall \gamma \in \Gamma \quad
\int_{z \leq 0} \gamma(z) dz = \frac{1}{2}.
\end{equation*}

\item[\textbf{(Aid)}] \textit{Identifiability.} 
$Q^*$ is invertible and there exists $t \in \Nbb$ such that the densities $(\gamma^*_x(\cdot - T^*_x(t)))_{x \in [\Kast]}$ are pairwise distinct.

Equivalently, $Q^*$ is invertible and the couples $(\gamma^*_{x}(\cdot - \Delta(x)), \bbf^*(x))_{x \in [\Kast]}$ are pairwise distinct, where the functions $\bbf^*$ and $\Delta$ are defined in Definition~\ref{def_trend_blocks}.

\item[\textbf{(Areg)}] \textit{Regularity of the emission densities.} There exists a modulus of continuity $\omega$ (that is a nondecreasing function $\Rbb_+ \longrightarrow \Rbb_+ \cup \{ + \infty \}$ that is continuous at 0 and such that $\omega(0) = 0$) and a nondecreasing function $L : \Rbb_+ \longrightarrow \Rbb_+$ such that
\begin{equation*}
\forall (z, \eta) \in \Rbb^2
\quad \forall \gamma \in \Gamma
\quad \left| \log \frac{\gamma(z+\eta)}{\gamma(z)} \right| \leq L(|z|) \omega(|\eta|)
\end{equation*}
and such that
\begin{equation*}
\forall M > 0, \quad \Ebb^*[L(M + Z^{\max}_1)] < \infty.
\end{equation*}

\end{description}

As an example, consider $\Gamma$ as the set of densities of the distributions $\Ncal(0, s^2)$ where $s \in [s_-, s_+]$ for $0 < s_- \leq s_+ < \infty$. Then \textbf{(Amax)}, \textbf{(Amin)} and \textbf{(Acentering)} are immediate, \textbf{(Aint)} is straightforward since $|\log m|$ can be chosen as growing at most quadratically to infinity, the part of \textbf{(Aid)} on the densities holds when all states with equal trends have different variances, and the functions $L$ and $\omega$ in \textbf{(Areg)} can be computed explicitely.

\section{Consistency of the MLE}
\label{sec_main}

Let $n \in \Nbb^*$. The maximum likelihood estimator $\hat{\theta}_n = (K^{\hat{\theta}_n}, \pi^{\hat{\theta}_n}, Q^{\hat{\theta}_n}, \gamma^{\hat{\theta}_n}, T^{\hat{\theta}_n})$ is an element of
\begin{equation*}
\argmax_{\theta \in \Theta} \frac{1}{n} \log p^\theta_{Y_1^n}(Y_1^n).
\end{equation*}
where $p^\theta_{Y_1^n}$ is the density of the distribution of $Y_1^n$ with respect to the Lebesgue measure under the parameter $\theta$.

The main result of this paper is the following theorem.

\begin{theorem}
\label{th_MLEconsistency}
Assume that $\Kast$ is known and that \textbf{(Amax)}, \textbf{(Amin)}, \textbf{(Aint)}, \textbf{(Acentering)}, \textbf{(Aid)} and \textbf{(Areg)} hold.
Then almost surely, there exists a sequence of (random) permutations $\tau_n$ of $[\Kast]$ such that
\begin{equation*}
\begin{cases}
\forall x,x' \in [\Kast], \quad Q^{\hat{\theta}_n}(\tau_n(x),\tau_n(x')) \underset{n \rightarrow \infty}{\longrightarrow} Q^*(x,x'), \\
\forall z \in \Rbb, \quad \forall x \in [\Kast], \quad \gamma^{\hat{\theta}_n}_{\tau_n(x)}(z) \underset{n \rightarrow \infty}{\longrightarrow} \gamma^*_x(z), \\
\forall x \in [\Kast], \quad \| T^{\hat{\theta}_n}_{\tau_n(x)} - T^*_x \|_{\infty,[0,n]} \underset{n \rightarrow \infty}{\longrightarrow} 0.
\end{cases}
\end{equation*}
\end{theorem}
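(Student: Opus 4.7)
The plan is to follow the three-stage outline indicated in the introduction, leveraging the theorems already announced (\ref{th_approx_blocs}, \ref{th_localization}, \ref{th_conv_int_loglik}). First I would define the block partition of $[\Kast]$ by the equivalence relation ``$x \sim x'$ iff $T^*_x - T^*_{x'}$ is constant'' and, for each block $b$, fix a canonical representative $\Tbb^*_b$. Because the $T^*_x$ are polynomials of bounded degree, any two trends from different blocks diverge at a polynomial rate, while two trends from the same block stay within a bounded translation. This is what makes the block variable $B_t$ asymptotically recoverable from $Y_t$: the observations eventually lie in separated tubes, so I can invoke Theorem~\ref{th_approx_blocs} to replace $\frac{1}{n}\log p^\theta_{Y_1^n}(Y_1^n)$ by $\frac{1}{n}\log p^\theta_{Y_1^n,B_1^n}(Y_1^n,B_1^n)$ up to a vanishing error, both at $\theta^*$ and at $\hat\theta_n$.

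Next I would localize the estimated trends via Theorem~\ref{th_localization}. The two directions rely on likelihood comparisons. If some tube around the trends of a true block contains no estimated trend, then a positive fraction of observations (those from this block) is arbitrarily far from every estimated trend, and by the envelope bound \textbf{(Amax)} each such observation contributes $\log g(\text{large})\to-\infty$ to the log-likelihood, whereas the true parameter gives a finite limit: contradiction. Conversely, if an estimated trend belongs to no tube, then eventually it is far from all observations; the posterior weight of that state collapses, and one shows that removing this state (and redistributing its mass via \textbf{(Aerg)}) strictly improves the likelihood, contradicting maximality. Combined with $K^{\hat\theta_n}=\Kast$, this produces, for $n$ large, a permutation $\tau_n$ matching the estimated trends to the true ones within bounded distance on $[0,n]$.

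The third step is the de-trending. Setting $Z'_t = Y_t - \Tbb^*_{B_t}(t)$, I would view the joint process $(X_t,(Z'_t,B_t))$ as a new HMM with trends in which every residual trend $T^\theta_{\tau_n(x)} - \Tbb^*_{b(x)}$ is bounded on $[0,n]$. Because a polynomial of bounded degree that is bounded on $[0,n]$ has derivatives of size $O(n^{-1})$ on $[0,n/2]$, on mesoscopic windows of length $o(n)$ these residual trends are almost constant, so the de-trended process is locally close to a \emph{homogeneous} HMM. Theorem~\ref{th_conv_int_loglik} then computes the almost-sure limit of the normalized conditional log-likelihood along such windows using a Kingman/ergodic argument analogous to \cite{douc2004asymptotic} on each window. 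The limit is a Kullback contrast that is maximized only at the true homogeneous parameter (up to state permutation), by the identifiability assumption \textbf{(Aid)} together with \textbf{(Acentering)} (which fixes the translation ambiguity between the trend and the emission density by forcing $0$ to be the median). The limit being strictly maximized at $\theta^*$ yields convergence of $Q^{\hat\theta_n}$, $\gamma^{\hat\theta_n}$, and of the residual trends to $0$ along $[0,n]$, hence the supremum-norm convergence of $T^{\hat\theta_n}_{\tau_n(x)}$ to $T^*_x$ on $[0,n]$.

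The main obstacle will be the third step: the usual route to HMM consistency relies crucially on stationarity and ergodicity of the joint process, which genuinely fail here since the trends can diverge. The windowing/flatness argument is the key technical device that bridges the inhomogeneous model and the homogeneous limit; controlling the error coming from slowly varying residual trends uniformly over $\theta\in\Theta$ (using \textbf{(Areg)} and the integrability in \textbf{(Aint)}) while simultaneously handling the random permutation $\tau_n$ and the approximation error from Theorem~\ref{th_approx_blocs} is where the delicate uniform estimates will be needed. Everything else—the block definition, the envelope/likelihood-gap arguments for localization, and the final contrast-based identification—is conceptually standard once the right framework is set up.
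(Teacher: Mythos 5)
Your proposal follows essentially the same route as the paper: block decomposition and Theorem~\ref{th_approx_blocs}, localization of the estimated trends via Theorem~\ref{th_localization}, de-trending and the locally-homogeneous approximation leading to the integrated log-likelihood of Theorem~\ref{th_conv_int_loglik}, and identification of its unique maximizer via \textbf{(Aid)} and \textbf{(Acentering)} (the paper makes this last step precise through a compactness/subsequence argument using Proposition~\ref{prop_compacite_Dcal} and Proposition~\ref{prop_identifiabilite_via_lint_0}). The outline is correct and correctly identifies where the technical difficulties lie.
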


We say that the maximum likelihood estimator converges \emph{up to permutation of the hidden states}. Without convention on the ordering of the hidden states, it is not possible to get rid of this permutation.

Note that trends converge in supremum norm. This is considerably stronger than assuming that the coefficients of the polynomial converge. It is also intrinsic in the sense that it shows the convergence of the trends when seen as continuous functions and not just as polynomials. Therefore, it could be extended to other types of continuous trends.

\bigskip

Since the trends are polynomial, two trends are either equal up to translation (we say that they are \emph{in the same block}) or diverge from one another.
The first part of the proof, resulting in Theorem~\ref{th_approx_blocs}, makes use of this fact to show that the block 
from which an observation comes can be assumed observed once the number of observations is large enough.

\begin{definition}[Blocks of trends]
\label{def_trend_blocks}
Let $\Rcal^*$ be the equivalence relation defined on $[\Kast]$ by $x \Rcal^* x'$ if and only if $T^*_x - T^*_{x'}$ is constant. Let
\begin{equation*}
\Bcal^* := [\Kast] / \Rcal^*
\end{equation*}
be the set of ``blocks" of true trends.

Let us denote by $\bbf^* : [\Kast] \longrightarrow \Bcal^*$ the quotient mapping and let
\begin{equation*}
B_t := \bbf^*(X_t)
\end{equation*}
be the block from which the observation $Y_t$ is generated.
For each $b \in \Bcal^*$, write
\begin{equation*}
\Tbb^*_b = T^*_{\min \{x | x \in b\}}
\end{equation*}
the reference trend of block $b$ and let
\begin{equation*}
\Delta : \xast \in [\Kast] \longmapsto T^*_\xast(1) - \Tbb^*_{\bbf^*(\xast)}(1)
\end{equation*}
be the function which maps the index of a trend to the difference between the corresponding trend and its reference trend.


For all $\theta \in \Theta$, write
\begin{align*}
\ell_n(\theta) &:= \log p^\theta_{Y_1^n}(Y_1^n) \\
\ell_n^{(Y,B)}(\theta) &:= \log p^\theta_{(Y,B)_1^n}((Y,B)_1^n)
\end{align*}
the log-likelihoods of $\theta$ with respect to the processes $(Y_t)_{t \geq 1}$ and $(Y_t,B_t)_{t \geq 1}$.
\end{definition}

Define the tube of size $M$ of a block \emph{at time $n$} as the set of functions $\Rbb_+ \rightarrow \Rbb$ which are at a distance smaller than $M$ of the reference trend of this block on $[0,n]$ (and as a consequence is at a distance smaller than $M$ during the first $n$ time steps).
Since the observations $(Y_t)_{t \geq 1}$ are clustered around the true trends (see Figure~\ref{fig_ex2_data} for instance), the estimated trends and the tubes will eventually match.
This behaviour is formalized in Theorem~\ref{th_localization} below by proving that there exists a large enough $M$ such that for $n$ large enough, $\hat{\theta}_n \in \ThetaOK(M)$, where for all $M > 0$ and $n \in \Nbb$, $\ThetaOK(M)$ is the subset of $\Theta$ defined by
\begin{align}
\label{eq_vraies_tendances_pas_toutes_seules0}
\forall \theta \in \ThetaOK(M), \qquad
& \forall \xast\in[\Kast],\quad \exists x\in\Xcaltheta,\quad \|T^*_\xast - T_x^{\theta}\|_{\infty,[0,n]}\leq M \\
\label{eq_tendances_parametres_pas_toutes_seules0}
\text{ and } \  & \forall x\in\Xcaltheta,\quad \exists \xast\in[\Kast],\quad \|T^*_\xast - T_x^{\theta}\|_{\infty,[0,n]}\leq M.
\end{align}
If $\theta \in \ThetaOK(M)$, each tube of size $M + \|\Delta\|_\infty$ contains at least one trend $T^\theta_x$, $x \in [K^\theta]$, and each trend $T^\theta_x$, $x \in [K^\theta]$ is in a tube of size $M + \|\Delta\|_\infty$, see Figure~\ref{dessin_ThetaOK} for an illustration.

\begin{figure}
\centering
\includegraphics[scale = 0.6]{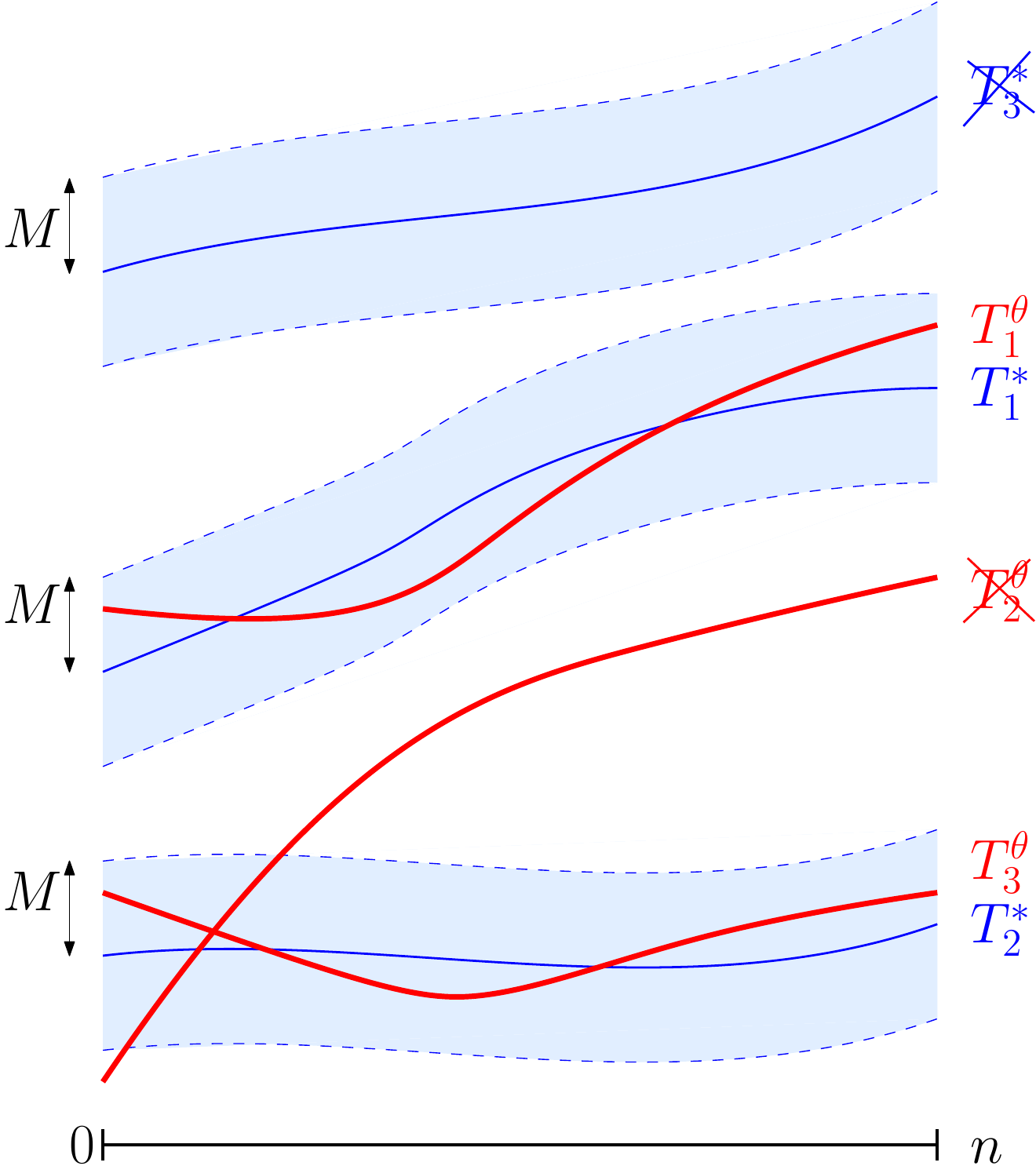}
\caption{Illustration of the tubes of trends and of a $\theta$ that is not in $\ThetaOK(M)$.}
\label{dessin_ThetaOK}
\end{figure}

The following theorem states that when $\theta \in \ThetaOK(M)$ (that is, informally, when the trends are loosely identified) and provided that the number of observations is large enough, the log-likelihood associated to the observed process $(Y_t)_{t\geq 1}$ can be approximated by the log-likelihood of the process $(Y_t,B_t)_{t\geq 1}$ where we know which block of trend each observation comes from.

Let us first specify what we mean by the log-likelihood of the process $(Y_t,B_t)_{t\geq 1}$. Since the true trends are either in the same block or diverge from one another, the tubes eventually have empty intersection. Once this is the case, given $\theta \in \ThetaOK(M)$ and $x \in \Xcaltheta$, we may define $\bbf^\theta(x)$ as the block whose tube contains the trend $T^\theta_x$. The log-likelihood $\ell_n^{(Y,B)}(\theta)$ is the log-likelihood of a inhomogeneous HMM whose emission densities at time $t$ are $(y,b) \in \Rbb \times \Bcal^* \mapsto \gamma^\theta_x(y - T^\theta_x(t)) \one(b = \bbf^\theta(x))$ for each $x \in \Xcaltheta$.

\begin{theorem}[Adding block information]
\label{th_approx_blocs}
Assume \textbf{(Amax)}, \textbf{(Amin)} and \textbf{(Aint)}.
Then for all $M > 0$, almost surely,
\begin{align*}
\sup_{\theta \in \Theta^\text{OK}_n(M)}
	\left| \frac{1}{n} \ell_n(\theta)
		- \frac{1}{n} \ell_n^{(Y,B)}(\theta) \right| \underset{n \rightarrow +\infty}{\longrightarrow} 0.
\end{align*}
\end{theorem}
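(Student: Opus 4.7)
The inequality $\ell_n(\theta) \geq \ell_n^{(Y,B)}(\theta)$ is immediate from the identity $p^\theta_{Y_1^n}(Y) = \sum_b p^\theta_{(Y,B)_1^n}(Y,b) \geq p^\theta_{(Y,B)_1^n}(Y,B)$. For the reverse, the starting point is the exact identity
\[
\ell_n(\theta) - \ell_n^{(Y,B)}(\theta) = -\log \Pbb^\theta\bigl(B^\theta_{1:n} = B_{1:n} \,\bigm|\, Y_{1:n}\bigr),
\]
where under $\Pbb^\theta$, $(X^\theta, Y^\theta)$ is an HMM with parameter $\theta$, $B^\theta_t := \bbf^\theta(X^\theta_t)$, and the conditioning is on $Y^\theta_{1:n} = Y_{1:n}$. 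The chain rule yields the per-time decomposition
\[
\ell_n(\theta) - \ell_n^{(Y,B)}(\theta) = \sum_{t=1}^n -\log \Pbb^\theta\bigl(B^\theta_t = B_t \,\bigm|\, Y_{1:n},\, B^\theta_{<t} = B_{<t}\bigr),
\]
so it suffices to bound each summand by a quantity whose Cesàro average vanishes $\Pbb^*$-almost surely, uniformly in $\theta \in \ThetaOK(M)$.

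For $t \geq 2$, I would apply the forward-backward representation
\[
\Pbb^\theta(X^\theta_t = x \mid Y_{1:n},\, B^\theta_{<t} = B_{<t}) \propto \alpha^B_t(x)\, \gamma^\theta_x(Y_t - T^\theta_x(t))\, \beta_t(x),
\]
where $\alpha^B_t$ is the conditional forward message. Assumption \textbf{(Aerg)} implies that both $\alpha^B_t$ and $\beta_t$ have ratios at most $1/\sigma_-$ across different states, so the posterior is controlled, up to a factor depending only on $K$ and $\sigma_-$, by the ratio of emissions at time $t$. On $\ThetaOK(M)$, every $\theta$-trend in the tube of $B_t$ satisfies $|Y_t - T^\theta_x(t)| \leq M + \|\Delta\|_\infty + Z^{\max}_t$, so $\gamma^\theta_x(Y_t - T^\theta_x(t)) \geq m(M + \|\Delta\|_\infty + Z^{\max}_t)$ by \textbf{(Amin)}; every $\theta$-trend in a different tube is at distance at least $D(t) - M - \|\Delta\|_\infty - Z^{\max}_t$ from $Y_t$, so $\gamma^\theta_x(Y_t - T^\theta_x(t)) \leq g(D(t) - M - \|\Delta\|_\infty - Z^{\max}_t)$ by \textbf{(Amax)}, where $D(t) := \min_{b \neq B_t} |\Tbb^*_b(t) - \Tbb^*_{B_t}(t)|$. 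Since differences of polynomials from distinct blocks are non-constant, $D(t) \to \infty$ polynomially, and combining these estimates yields the $\theta$-uniform bound
\[
\Pbb^\theta\bigl(B^\theta_t \neq B_t \,\bigm|\, Y_{1:n},\, B^\theta_{<t} = B_{<t}\bigr) \leq r_t(M) := C\, \frac{g\bigl((D(t) - M - \|\Delta\|_\infty - Z^{\max}_t)_+\bigr)}{m(M + \|\Delta\|_\infty + Z^{\max}_t)},
\]
with $C$ depending only on $K$ and $\sigma_-$.

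To sum, I separate times by the size of $r_t(M)$. When $r_t(M) \leq 1/2$, the summand is at most $2 r_t(M)$. Otherwise, a fallback bound obtained by lower-bounding the conditional probability via a single state $x^* \in [K^\theta]$ with $\bbf^\theta(x^*) = B_t$ controls the summand by $c + u_t$, with $c = c(K, \sigma_-, g(0))$ and $u_t := |\log m(M + \|\Delta\|_\infty + Z^{\max}_t)|$ an iid integrable sequence by \textbf{(Aint)}. It remains to show $\frac{1}{n}\sum_t (r_t(M) \wedge 1) \to 0$ and $\frac{1}{n}\sum_t u_t\, \one(r_t(M) > 1/2) \to 0$ almost surely. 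The first comes from an SLLN for independent $[0,1]$-valued variables combined with $\Ebb[r_t(M) \wedge 1] \to 0$, itself obtained by dominated convergence from $r_t(M) \to 0$ in $\Pbb^*$-probability (use $D(t) \to \infty$ and the tightness of $Z^{\max}_t$). The second uses the truncation $u_t \one(r_t(M) > 1/2) \leq u_t \one(u_t > \Lambda) + \Lambda\, \one(r_t(M) > 1/2)$ for arbitrary $\Lambda > 0$, the iid SLLN for $u_t$, and $\Pbb^*(r_t(M) > 1/2) \to 0$. The single time $t = 1$ contributes $O(1/n)$ and is negligible. The main obstacle is precisely this summation step: since \textbf{(Aint)} provides integrability only for $|\log m|$, the factor $1/m$ in $r_t(M)$ need not be integrable and no direct expectation bound applies; the truncation separating typical from exceptional times is essential. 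Uniformity in $\theta$ is automatic, as every bound depends on $\theta$ only through $M$.
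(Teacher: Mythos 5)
Your proof is correct and reaches the conclusion by a genuinely different route from the paper. The paper stays in the filtering world: it writes $\frac{1}{n}\ell_n(\theta)=\frac{1}{n}\sum_t\log p^\theta_{Y_t|Y_1^{t-1}}(Y_t|Y_1^{t-1})$ and passes to $\frac{1}{n}\sum_t\log p^\theta_{Y_t,B_t|Y_1^{t-1},B_1^{t-1}}$ in three stages --- add $B_t$ to the prediction term, add the distant blocks $B_1^{t-a}$ via an exponential-forgetting lemma at cost $C\rho^a$, then add the recent blocks $B_{t-a+1}^{t-1}$ by a union bound over the window --- which forces a final optimization of the window length $a$ against the forgetting rate $\rho$. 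You instead use the exact smoothing identity $\ell_n(\theta)-\ell_n^{(Y,B)}(\theta)=-\log \Pbb^\theta(B_1^n\mid Y_1^n)$, telescope it by the chain rule, and control each term through the forward--backward factorization together with the uniform minorization $Q\geq\sigma_-$, which caps the state-to-state ratios of the forward and backward messages by $1/\sigma_-$ each; this eliminates both the forgetting lemma and the window parameter, at the harmless price of conditioning on the full record $Y_1^n$. From that point on the two arguments essentially coincide: your $r_t(M)$ is the paper's $h'(E(t),Z^{\max}_t)$ up to constants, and your two-regime summation (linearize $-\log(1-r_t)$ when $r_t\leq 1/2$, fall back to $c+|\log m(M+\|\Delta\|_\infty+Z^{\max}_t)|$ otherwise, then truncate) does the same job as the paper's monotonicity-in-$E$ plus dominated-convergence argument, with \textbf{(Aint)} needed for exactly the reason you identify. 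Two minor points you should make explicit: the wrong-block emission bound presupposes that $\bbf^\theta$ is well defined, i.e.\ that $n$ is large enough for the size-$(M+\|\Delta\|_\infty)$ tubes to be pairwise disjoint (the paper's $n\geq n_1$); and the $t=1$ term is not literally $O(1/n)$ if $\pi^\theta$ charges no state of the tube of $B_1$ (then $\ell_n^{(Y,B)}(\theta)=-\infty$), though this edge case is equally present in the paper's proof, whose lower bound $p^\theta(X_t=x\mid Y_1^{t-1})\geq\sigma_-$ also fails at $t=1$.
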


Theorem \ref{th_approx_blocs} in proved in Appendix \ref{sec_block}, together with the following corollary.

\begin{corollary}
\label{cor_Arate}
Assume \textbf{(Amax)}, \textbf{(Amin)} and \textbf{(Aint)}. Then there exists a finite $\ell(\theta^*)$ such that
\begin{equation*}
\frac{1}{n} \ell_n(\theta^*) \underset{n \rightarrow \infty}{\longrightarrow} \ell(\theta^*).
\end{equation*}
\end{corollary}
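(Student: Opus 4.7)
The plan is to bootstrap this corollary directly from Theorem~\ref{th_approx_blocs}, by reducing the log-likelihood of $(Y_t)_{t \geq 1}$ first to that of $(Y_t, B_t)_{t \geq 1}$, and then to that of a genuinely \emph{homogeneous} HMM obtained by de-trending.

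The first reduction is immediate: the true parameter $\theta^*$ trivially belongs to $\ThetaOK(M)$ for every $n$ and every $M > 0$, since the identity map on $[\Kast]$ makes the left-hand sides of~\eqref{eq_vraies_tendances_pas_toutes_seules0} and~\eqref{eq_tendances_parametres_pas_toutes_seules0} equal to zero. Applying Theorem~\ref{th_approx_blocs} at $\theta = \theta^*$ therefore yields
\[
\frac{1}{n} \ell_n(\theta^*) - \frac{1}{n} \ell_n^{(Y,B)}(\theta^*) \underset{n \to \infty}{\longrightarrow} 0 \qquad \Pbb^*\text{-a.s.,}
\]
so it suffices to prove convergence of $\frac{1}{n} \ell_n^{(Y,B)}(\theta^*)$.

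For the second reduction, I would introduce $Z'_t := Y_t - \Tbb^*_{B_t}(t)$. By Definition~\ref{def_trend_blocks}, $T^*_{X_t}(t) - \Tbb^*_{B_t}(t) = \Delta(X_t)$, hence $Z'_t = Z_t + \Delta(X_t)$ depends only on the homogeneous pair $(X_t, Z_t)$. Consequently $(X_t, (Z'_t, B_t))_{t \geq 1}$ is a homogeneous HMM whose hidden chain has transition matrix $Q^*$ and whose emission densities on $\Rbb \times \Bcal^*$ (with respect to Lebesgue $\otimes$ counting) are $\tilde{\gamma}_x(z', b) := \gamma^*_x(z' - \Delta(x)) \, \one(b = \bbf^*(x))$. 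Because the translation $Y_t \leftrightarrow Z'_t$ has Jacobian one and leaves $B_t$ unchanged,
\[
\ell_n^{(Y,B)}(\theta^*) = \log p^{\theta^*}_{(Z',B)_1^n}((Z'_1^n, B_1^n)).
\]
This is the log-likelihood of a homogeneous, uniformly ergodic HMM (by~\textbf{(Aerg)}), to which I would apply the standard ergodic theorem for HMMs in the spirit of~\cite{douc2004asymptotic} to obtain $\Pbb^*$-almost sure convergence to a finite limit $\ell(\theta^*)$.

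The only non-routine point is verifying the integrability hypothesis required by that ergodic theorem, namely $\Ebb^* |\log \tilde{\gamma}_{X_1}(Z'_1, B_1)| = \Ebb^* |\log \gamma^*_{X_1}(Z_1)| < \infty$. Using $|Z_1| \leq Z^{\max}_1$, monotonicity of $m$, the sandwich $m(|z|) \leq \gamma(z) \leq g(|z|)$, and~\textbf{(Aint)}, both tails of $\log \gamma^*_{X_1}(Z_1)$ are controllable; this is essentially bookkeeping once \textbf{(Amax)}--\textbf{(Aint)} are in place, and is where I expect the only genuinely technical work.
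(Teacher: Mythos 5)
Your proof follows the paper's own argument essentially verbatim: reduce to $\ell_n^{(Y,B)}(\theta^*)$ via Theorem~\ref{th_approx_blocs} (using that $\theta^*\in\ThetaOK(M)$ trivially), de-trend to exhibit $(X_t,(Z'_t,B_t))_{t\geq 1}$ as a homogeneous ergodic HMM, and invoke a classical convergence result for its normalized log-likelihood. The only difference is which classical result you cite (the paper uses the generalized Shannon--McMillan--Breiman theorem of Barron for existence and a Leroux-type bound for finiteness, while you invoke the Douc--Moulines--Ryd\'en framework and check integrability directly via \textbf{(Amax)}, \textbf{(Amin)} and \textbf{(Aint)}); both are standard and your integrability check is correct.
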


Corollary \ref{cor_Arate} is a key argument in the proof of the following theorem, proved in Appendix~\ref{sec_localisation_MLE}.

\begin{theorem}[Localization of the maximum likelihood estimator]
\label{th_localization}
Assume \textbf{(Amax)}, \textbf{(Amin)} and \textbf{(Aint)}.
Then there exists $M > 0$ such that almost surely, there exists $n_\text{loc} \in \Nbb$ such that for all $n \geq n_\text{loc}$,
\begin{equation*}
\hat{\theta}_n \in \ThetaOK(M).
\end{equation*}
\end{theorem}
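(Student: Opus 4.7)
The plan is to argue by contradiction using the maximality of the MLE. Since $\hat\theta_n$ maximizes $\ell_n$, we have $n^{-1}\ell_n(\hat\theta_n) \geq n^{-1}\ell_n(\theta^*)$, and by Corollary~\ref{cor_Arate} the right-hand side converges to the finite constant $\ell(\theta^*)$. I would find $M>0$ large enough that $n^{-1}\ell_n(\theta) < \ell(\theta^*)$ eventually for every $\theta \in \Theta \setminus \ThetaOK(M)$; this immediately yields $\hat\theta_n \in \ThetaOK(M)$ past some random $n_{\text{loc}}$. The failure of $\theta \in \ThetaOK(M)$ splits into two asymmetric cases, each requiring a different kind of upper bound on $\ell_n(\theta)$.

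Case~(a), some true trend $T^*_{x^*}$ is not $M$-approximated by any estimated trend. I would use the envelope from \textbf{(Amax)} to get the path-independent bound $\ell_n(\theta) \leq \sum_{t=1}^n \log g(\min_{x \in [K^\theta]} |Y_t - T^\theta_x(t)|)$. Setting $P_x = T^*_{x^*} - T^\theta_x$, each $P_x$ is a polynomial of degree $\leq d$ with sup norm $>M$ on $[0,n]$, and by Markov's inequality for polynomials, each $|P_x|$ exceeds $M/2$ on a sub-interval of length $\geq n/(4d^2)$ around its maximum. A combinatorial argument on these sub-intervals, using that each sublevel set $\{|P_x| \leq M/2\}$ consists of at most $2d$ intervals and that $K^\theta \leq K$ is uniformly bounded, produces a subset $G_n \subset [0,n]$ of positive asymptotic density on which $\min_x |P_x(t)| > M/2$ simultaneously for all $x$. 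By \textbf{(Aerg)} and the tightness of $Z^{\max}_t$, the intersection $G_n \cap \{X_t = x^*,\, Z^{\max}_t \leq M/4\}$ still has positive asymptotic density, and on it $\min_x |Y_t - T^\theta_x(t)| > M/4$ contributes $\log g(M/4)$ to the upper bound. Since $\log g(M/4) \to -\infty$ by \textbf{(Amax)}, choosing $M$ large enough drives $n^{-1}\ell_n(\theta) < \ell(\theta^*)$.

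Case~(b), some estimated trend $T^\theta_{x_0}$ is not $M$-approximated by any true trend. Here I would construct an alternative $\tilde\theta \in \Theta$ with $K^{\tilde\theta} = K^\theta - 1$ by deleting state $x_0$ and redistributing $Q^\theta(\cdot, x_0)$ and $\pi^\theta(x_0)$ uniformly over the remaining $K^\theta - 1$ states (which preserves \textbf{(Aerg)}); for any hidden path $x_1^n$ that avoids $x_0$, the ratio of prior probabilities satisfies $\tilde p(x_1^n)/p^\theta(x_1^n) \geq (1 + \sigma_-/(K-1))^n$. On the other hand, paths visiting $x_0$ contribute negligibly to $p^\theta(Y_1^n)$: by the same polynomial estimate as in case~(a), a positive asymptotic density of times $t$ sees $|Y_t - T^\theta_{x_0}(t)|$ large, hence $\gamma^\theta_{x_0}(Y_t - T^\theta_{x_0}(t))$ small, so a forward--backward bookkeeping using \textbf{(Aerg)} shows that the posterior marginal of $X^\theta_t = x_0$ at those times vanishes as $M \to \infty$. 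Combining these yields $\ell_n(\tilde\theta) \geq \ell_n(\theta) + n\log(1 + \sigma_-/(K-1)) - o(n)$, contradicting the MLE property for $n$ large. The main obstacle throughout is the combinatorial polynomial estimate producing the positive-density good set $G_n$: although each individual sublevel set is small by Markov, they could cover most of $[0,n]$ if arranged adversarially, and controlling the intersection requires careful use of the bounded degree $d$ and the uniform bound $K^\theta \leq K$.
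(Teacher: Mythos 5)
Your global strategy is the paper's: use Corollary~\ref{cor_Arate} to fix the benchmark $\ell(\theta^*)$, show $\limsup_n\sup_{\theta\notin\ThetaOK(M)}\frac1n\ell_n(\theta)<\ell(\theta^*)$, split the failure of $\theta\in\ThetaOK(M)$ into the two asymmetric cases, and in case~(b) compare $\theta$ with a parameter obtained by deleting the superfluous state. However, both of your cases rest on a step that is false as stated, and you have correctly identified it as the ``main obstacle'' without resolving it: from the hypothesis that $\|T^*_{\xast}-T^\theta_x\|_{\infty,[0,n]}>M$ for \emph{every} $x$, you cannot produce a set $G_n\subset\{1,\dots,n\}$ of positive asymptotic density on which $\min_x|T^*_{\xast}(t)-T^\theta_x(t)|>M/2$. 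Concretely, take $d=2$, $K^\theta=2$, $P_1(t)=2M\left((n-t)/n\right)^2$ and $P_2(t)=2M(t/n)^2$: both have sup norm $2M>M$ on $[0,n]$, yet $|P_1(t)|\le M/2$ for all $t\ge n/2$ and $|P_2(t)|\le M/2$ for all $t\le n/2$, so $\min(|P_1(t)|,|P_2(t)|)\le M/2$ at \emph{every} $t$. Two estimated trends can thus jointly cover a true trend pointwise while neither is close to it in sup norm; the observations from $\xast$ are then always near some estimated trend and the likelihood penalty you rely on never materializes. The number of intervals in each sublevel set is irrelevant here — it is their total length that can be of order $n$ — so no combinatorial refinement of the Markov-brothers bound can close this. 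The symmetric problem kills case~(b): an estimated trend can be pointwise near the union of the true trends at every time while being sup-norm far from each individual one, and then the posterior mass of that state need not vanish.

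The paper circumvents this with a two-step architecture that is absent from your proposal. First, it replaces the sup-norm conditions by density-based relaxations $\Tcal(\alpha,n,D)$ and $\Ucal(\beta,n,B)$ (``at most a fraction $\alpha$, resp.\ $\beta$, of times at which some true trend is pointwise $D$-far from all estimated trends, resp.\ some estimated trend is pointwise $B$-far from all true trends''); the negation of \emph{these} conditions hands you the positive-density pointwise-separated set directly, and your likelihood arguments (envelope bound plus \textbf{(Aerg)} in case~(a), state deletion in case~(b)) then go through — modulo the uniformity over $\theta$ of the bad set, which the paper handles via the connected-components count $A=(d+1)^K$, the segment-uniform Hoeffding bound of Lemma~\ref{lem_hoeffding}, and the top-quantile Lemma~\ref{lemma_top_quantile}. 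Second, and this is the genuinely missing ingredient, Proposition~\ref{lemma_theta_dans_I_implique_tendances_proches} (proved via Lagrange interpolation of the rescaled degree-$d$ differences at $d+1$ well-separated nodes and Arzelà–Ascoli) shows that pointwise $D$-closeness on a fraction $\alpha/K$ of the times forces sup-norm closeness within a larger constant $M(\alpha,D)$, whence $\Tcal(\alpha,n,D)\cap\Ucal(\beta,n,B)\subset\ThetaOK(M(\alpha,D)\vee M(\beta,B))$. Without this polynomial-rigidity step your dichotomy does not exhaust $\Theta\setminus\ThetaOK(M)$, and the proof does not close.
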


Let us give an intuition on the last steps of the proof.
We know by Theorem~\ref{th_approx_blocs} that $\frac{1}{n}\ell_n(\theta) \simeq \frac{1}{n}\ell_n^{(Y,B)}(\theta)$. That means we may de-trend the observations by letting $Z'_t = Y_t - \Tbb^*_{B_t}(t)$, and $\frac{1}{n}\ell_n(\theta) \simeq \frac{1}{n}\ell_n^{(Y,B)}(\theta) = \frac{1}{n}\ell_n^{(Z',B)}(\theta)$. Under a parameter $\theta \in \ThetaOK(M)$, this new HMM with trends $(X_t, (Z'_t, B_t))_{t \geq 1}$ is ``locally homogeneous". Let us explain what this means.

Rescale the trends of $Z'_t$ so that the time index lives in $[0,1]$ by defining
\begin{equation}
\label{def_Dthetan}
D^{\theta,n}_x : u \in [0,1] \longmapsto T^\theta_x(nu) - \Tbb^*_{\bbf^\theta(x)}(nu).
\end{equation}
Since the functions $D^{\theta,n}_x$ are bounded and polynomial, they live in a relatively compact subset of the continuous functions on $[0,1]$ for all $n$, $x$ and $\theta \in \ThetaOK(M)$. In particular, we control how fast they vary, and thus how well they can be approximated by piecewise constant functions (see Figure~\ref{dessin_approx_homogene}), which correspond to a HMM with a succession of homogeneous regimes.
\begin{figure}[h]
\centering
\includegraphics[scale = 0.8]{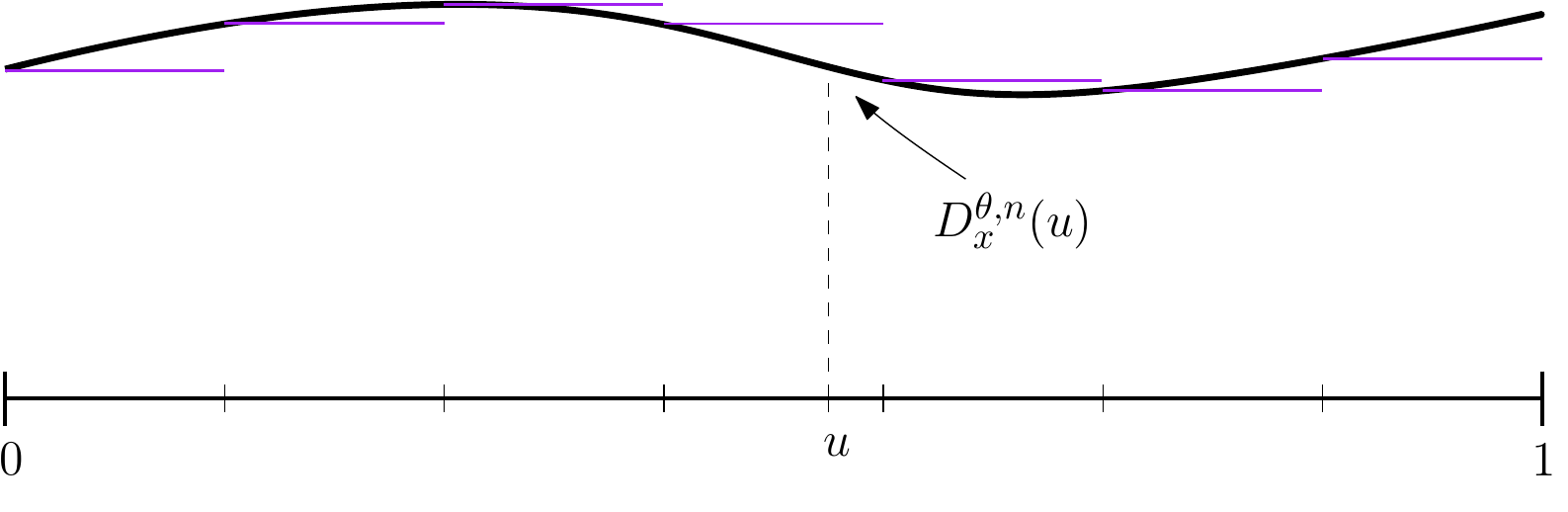}
\caption{Approximation of one trend of the de-trended HMM (black) by the corresponding trend in a succession of homogeneous HMM (purple).}
\label{dessin_approx_homogene}
\end{figure}

Let $\frac{1}{n} \ell_n^{(Y,B)}\left[N \right](\theta)$ be the log-likelihood corresponding to such an approximation when the sample (i.e. the time interval $[0,n]$ before rescaling) is cut into $N$ pieces, and let $\ell^\text{hom}(\theta,n,u)$ be the limit of the normalized log-likelihood of a homogeneous HMM with parameters given by $\theta$ frozen at time $nu$, that is of a homogeneous HMM
$(X_t, (Z'_t, B_t))_{t \geq 1}$ with
transition matrix $Q^\theta$ and emission densities $(z',b) \mapsto \gamma^\theta_x(z' - D^{\theta,n}_x(u)) \one(b = \bbf^\theta(x))$, $x \in [K^\theta]$. The previous paragraph means that the following sequence of approximations holds:
\begin{align*}
\frac{1}{n}\ell_n(\theta) & \simeq \frac{1}{n}\ell_n^{(Y,B)}(\theta) \\
						  & \simeq \frac{1}{n}\ell_n^{(Y,B)}\left[N \right](\theta) \\
						  & \simeq \frac{1}{N}\sum_{i=1}^N \ell^\text{hom}\left(\theta,n,\frac{i-1}{N}\right) \quad \text{when } n \rightarrow \infty \\
						  & \simeq \int_{[0,1]}\ell^{\text{hom}}(\theta,n,u)du \quad \text{by Riemann integration.}
\end{align*}

To conclude, we use that the latter quantity, the \emph{integrated log-likelihood}, is maximized only at the true parameters, as shown in Proposition~\ref{prop_identifiabilite_via_lint_0}.

\bigskip

Let us now state these intermediate results more rigorously.
For all $K' \in \Nbb^*$, for all $K'$-uple $\gamma = (\gamma_x)_{x \in [K']}$ of measurable functions and for all $\Dbf = (D_x)_{x \in [K']} \in \Rbb^{K'}$, let
\begin{equation*}
\tau(\gamma, \Dbf) := (z' \longmapsto \gamma_x(z' - D_x))_{x \in [K']}
\end{equation*}
be the vector of functions $\gamma$ translated by the vector $\Dbf$.

\begin{definition}[Log-likelihood of homogeneous HMM]
\label{def_lhom_0}
Let $K' \in \Nbb^*$ be a positive integer, $\pi$ be a probability measure on $[K']$, $Q$ be a transition matrix of size $K'$, $\gamma$ be a vector of $K'$ emission densities on $\Rbb$ and $\bbf$ be a function $[K'] \longrightarrow \Bcal^*$.
Let $(X_t, (\tilde{Z}_t,\tilde{B}_t))_{t \geq 1}$ be a homogeneous HMM taking values in $[K'] \times (\Rbb \times \Bcal^*)$ with parameter $(\pi, Q, (\gamma_x \otimes \one_{\bbf(x)})_{x \in [K']})$.

Denote by $\frac{1}{n} \ell_n^\text{hom}(\pi,Q,\gamma,\bbf)\{(\tilde{z},\tilde{b})_1^n\}$ (resp. $\ell^\text{hom}(Q,\gamma,\bbf)$) its normalized log-likelihood associated with the observations $(\tilde{z},\tilde{b})_1^n$ (resp. the limit of the log-likelihood, if it exists), that is
\begin{equation*}
\frac{1}{n} \ell_n^\text{hom}(\pi,Q,\gamma,\bbf)\{(\tilde{z},\tilde{b})_1^n\}
	= \frac{1}{n} \log \!\!\! \sum_{x_1^n \in [K']^n} \!\!\! \pi(x_1) Q(x_1, x_2) \dots Q(x_{n-1}, x_n) \prod_{t=1}^n \gamma_{x_t}(\tilde{z}_t) \one_{\bbf(x_t) = \tilde{b}_t}
\end{equation*}
and
\begin{equation}
\label{eq_cvg_logV_homogene_0}
\ell^\text{hom}(Q,\gamma,\bbf)
	= \lim_{n \rightarrow \infty} \frac{1}{n} \ell_n^\text{hom}(\pi,Q,\gamma,\bbf)\{(\tilde{Z},\tilde{B})_1^n\}.
\end{equation}
\end{definition}

The following Lemma ensures the existence of the limit of the normalized log-likelihood in Definition~\ref{def_lhom_0}. It relies on results from \cite{douc2004asymptotic} on homogeneous HMM.

\begin{lemma}
Assume \textbf{(Amax)}, \textbf{(Amin)} and \textbf{(Aint)}. Let $K' \in \Nbb^*$.
Then almost surely, for all $Q \in \Sigma_{K'}^{\sigma_-}$, $\gamma \in \Gamma^{K'}$, $\Dbf \in \Rbb^{K'}$ and $\bbf : [K'] \longrightarrow \Bcal^*$, the quantity
\begin{equation*}
\ell^\text{hom}(Q, \tau(\gamma,\Dbf), \bbf)
\end{equation*}
from Equation~\eqref{eq_cvg_logV_homogene_0} exists, does not depend on the choice of the initial measure $\pi$ and is finite when $(\tilde{Z}_t,\tilde{B}_t)_t = (Y_t - \Tbb^*_{B_t}(t),B_t)_t$.
\end{lemma}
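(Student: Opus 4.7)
The plan is to reduce the lemma to the convergence theorem of \cite{douc2004asymptotic} for the normalized log-likelihood of a (possibly misspecified) homogeneous HMM evaluated on a stationary ergodic observation process.

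The first step is to recognise that $(\tilde{Z}_t, \tilde{B}_t)_{t \geq 1}$ itself arises as the emission process of a homogeneous HMM. Since $B_t = \bbf^*(X_t)$ belongs to the block of $X_t$ and two trends of the same block differ by a constant equal to their difference at time $1$, we have $T^*_{X_t}(t) - \Tbb^*_{B_t}(t) = \Delta(X_t)$ for every $t$. Hence $\tilde{Z}_t = Z_t + \Delta(X_t)$ and $\tilde{B}_t = \bbf^*(X_t)$ are time-homogeneous functions of $(X_t, Z_t)$, so $(X_t, \tilde{Z}_t, \tilde{B}_t)_{t \geq 1}$ is itself a homogeneous HMM with parameters $(\Kast, \pi^*, Q^*)$ and emission densities $(z,b) \mapsto \gamma^*_x(z - \Delta(x)) \one(b = \bbf^*(x))$. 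Assumption \textbf{(Aerg)} applied to $Q^*$ yields uniform geometric ergodicity of the hidden chain, so $(\tilde{Z}_t, \tilde{B}_t)_t$ is asymptotically stationary and ergodic; replacing $\pi^*$ by the stationary distribution of $Q^*$ only perturbs $\ell_n^{\text{hom}}$ by $O(1/n)$, so I may assume stationarity without loss of generality.

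Next, $\frac{1}{n} \ell_n^\text{hom}(\pi, Q, \tau(\gamma, \Dbf), \bbf)\{(\tilde{Z}, \tilde{B})_1^n\}$ is precisely the normalized log-likelihood of the homogeneous HMM with transition matrix $Q \in \Sigma_{K'}^{\sigma_-}$, initial law $\pi$ and emission densities $\tilde{\gamma}_x(z, b) := \gamma_x(z - D_x) \one(b = \bbf(x))$ (with respect to the Lebesgue measure tensored with the counting measure on $\Bcal^*$), evaluated on the stationary ergodic observations $(\tilde{Z}_t, \tilde{B}_t)_t$ constructed above. The existence of the limit and its independence from $\pi$ then follow from the main convergence theorem of \cite{douc2004asymptotic}: one writes $\frac{1}{n}\ell_n^\text{hom}$ as a Cesàro average of conditional log-densities $\log p(\tilde{Z}_t, \tilde{B}_t \mid (\tilde{Z}, \tilde{B})_1^{t-1}; \pi, Q, \tilde{\gamma})$, extends the conditioning history back to $-\infty$ via the uniform forgetting of the filter supplied by \textbf{(Aerg)}, and concludes by Birkhoff's ergodic theorem applied to the resulting stationary functional. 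The ``for all $Q, \gamma, \Dbf, \bbf$'' uniformity is obtained by a standard separability argument, using the compactness of $\Sigma_{K'}^{\sigma_-}$ and $\Gamma$, the continuity of the limit in $\Dbf$, and the finiteness of the set of maps $\bbf$.

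Finally, I would verify the $\Pbb^*$-integrability required by Birkhoff's theorem and deduce finiteness of the limit for the specified observation process. Using $|\tilde{Z}_1 - D_x| \leq Z^{\max}_1 + \|\Delta\|_\infty + |D_x|$ together with the monotonicity of $m$ and $g$ yields $m(M + Z^{\max}_1) \leq \gamma_x(\tilde{Z}_1 - D_x) \leq g(0)$ for $M := \|\Delta\|_\infty + |D_x|$, so $|\log \gamma_x(\tilde{Z}_1 - D_x)|$ is dominated by $|\log g(0)| + |\log m(M + Z^{\max}_1)|$, which is $\Pbb^*$-integrable by \textbf{(Aint)}. The main obstacle I foresee is the vanishing of $\tilde{\gamma}_x$ on $\Rbb \times (\Bcal^* \setminus \{\bbf(x)\})$: the uniform lower bound on emission densities invoked in \cite{douc2004asymptotic} is not directly available, so finiteness of $\ell^\text{hom}$ is not automatic. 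The natural remedy is to condition on the observed block sequence $\tilde{B}_1^n$ and treat the conditional chain as an inhomogeneous Markov chain on the $\bbf$-compatible states, whose effective transitions remain bounded below by $\sigma_-$ thanks to \textbf{(Aerg)}; this secures finiteness in the intended applications, where $\bbf = \bbf^\theta$ is induced by some $\theta \in \ThetaOK$ so that every observed block has at least one preimage state.
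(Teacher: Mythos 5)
Your proposal follows essentially the same route as the paper: both reduce the statement to Proposition 2 of \cite{douc2004asymptotic} applied to the homogeneous, stationary and ergodic HMM $(X_t,(Z'_t,B_t))_{t\geq 1}$ with $Z'_t = Z_t + \Delta(X_t)$, using \textbf{(Aerg)} for the forgetting of the initial measure, \textbf{(Amax)}, \textbf{(Amin)} and \textbf{(Aint)} for the integrability, and a compactness/continuity argument to get the almost-sure statement simultaneously for all parameters. The one obstacle you flag --- the vanishing of $\gamma_x \otimes \one_{\bbf(x)}$ outside the block $\bbf(x)$ --- is resolved in the paper without your conditioning detour: the integrability hypothesis of \cite{douc2004asymptotic} only bears on $\bigl(\log \sum_{x} \gamma^\omega_x(V_1)\bigr)_-$, i.e.\ on the sum over states, and this sum is bounded below by $m\bigl(\max_x |D_x| + \|\Delta\|_\infty + Z^{\max}_1\bigr)$ as soon as at least one state is mapped to the observed block, which holds for the maps $\bbf = \bbf^\theta$ used in the sequel (and, as you correctly suspect, finiteness genuinely fails for a $\bbf$ whose image misses a block of $\Bcal^*$).
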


\begin{definition}[Integrated log-likelihood]\label{def_int_loglik1}
We call \emph{integrated log-likelihood} the following mapping:
\begin{equation*}
\ell^\text{int} : (Q,\gamma, \Dfrak, \bbf) \in
\bigcup_{K'=1}^K \Sigma_{K'}^{\sigma_-} \times \Gamma^{K'} \times \Lbf^\infty([0,1])^{K'} \times (\Bcal^*)^{K'} \longmapsto \int_0^1 \ell^\text{hom}(Q, \tau(\gamma,\Dfrak(u)), \bbf) du
\end{equation*}
\end{definition}

For all $M > 0$, let
\begin{equation}
\label{eq_def_Dcal1}
\Dcal(M) := \bigcup_{n \geq 4K(d+1)} \left\{ D^{\theta,n}_x \, | \, \theta \in \Theta^\text{OK}_n(M), x \in \Xcaltheta \right\}
\end{equation}
where $D^{\theta,n}_x$ is defined in Equation~\eqref{def_Dthetan}.
By definition of $\ThetaOK(M)$, $\Dcal(M)$ is a subset of $\Lbf^\infty([0,1])$ uniformly bounded by $M + \|\Delta\|_\infty$. Let $\Cl(\Dcal(M))$ be its closure in $\Lbf^\infty([0,1])$.

\begin{prop}
\label{prop_compacite_Dcal}
Let $M > 0$. Assume \textbf{(Amax)} and \textbf{(Amin)}.
Then $\Cl(\Dcal(M))$ is a compact subset of $\Ccal^0([0,1])$.
\end{prop}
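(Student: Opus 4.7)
The plan is to exploit the fact that every element of $\Dcal(M)$ is a polynomial of degree at most $d$, so that $\Dcal(M)$ lies in a fixed finite-dimensional subspace of $\Ccal^0([0,1])$, on which any bounded set is relatively compact.

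First I would verify that $\Dcal(M)$ is contained in $P_d := \{u \mapsto P(u), \ P \in \Rbb_d[X]\} \subset \Ccal^0([0,1])$. Indeed, by definition $D^{\theta,n}_x(u) = T^\theta_x(nu) - \Tbb^*_{\bbf^\theta(x)}(nu)$, and since $T^\theta_x$ and $\Tbb^*_{\bbf^\theta(x)}$ both belong to $\Rbb_d[X]$, the map $u \mapsto T^\theta_x(nu) - \Tbb^*_{\bbf^\theta(x)}(nu)$ is a polynomial in $u$ of degree at most $d$. Since $P_d$ is a $(d+1)$-dimensional subspace of $\Lbf^\infty([0,1])$, it is closed, and therefore $\Cl(\Dcal(M)) \subset P_d \subset \Ccal^0([0,1])$, which gives the first half of the statement.

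Second I would establish the compactness. By the definition of $\ThetaOK(M)$ and by the triangle inequality (using that $\Tbb^*_{\bbf^\theta(x)} - T^*_{x^*}$ is a constant of absolute value at most $\|\Delta\|_\infty$ for some $x^*$ with $\|T^\theta_x - T^*_{x^*}\|_{\infty,[0,n]} \leq M$), every $D^{\theta,n}_x$ with $\theta \in \Theta^{\text{OK}}_n(M)$ satisfies
\begin{equation*}
\|D^{\theta,n}_x\|_{\infty, [0,1]}
	= \|T^\theta_x - \Tbb^*_{\bbf^\theta(x)}\|_{\infty, [0,n]}
	\leq M + \|\Delta\|_\infty.
\end{equation*}
Hence $\Dcal(M)$ is a bounded subset of the finite-dimensional normed space $(P_d, \|\cdot\|_{\infty,[0,1]})$. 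In such a space closed bounded sets are compact, so $\Cl(\Dcal(M))$ is compact in $P_d$, and the topology induced from $\Lbf^\infty([0,1])$ agrees with the topology induced from $\Ccal^0([0,1])$ since both restrict to the unique Hausdorff norm topology on the finite-dimensional space $P_d$.

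There is no real obstacle here; the argument is a short finite-dimensionality argument. The only point that deserves a bit of attention is checking that the closure in $\Lbf^\infty$ does coincide with the closure in $\Ccal^0([0,1])$, which is immediate once one observes that $P_d$ is a closed subspace for both ambient norms and that these norms restrict to equivalent norms on $P_d$.
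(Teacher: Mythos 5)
Your proof is correct, but it takes a genuinely different and more elementary route than the paper. The paper obtains Proposition~\ref{prop_compacite_Dcal} as a corollary of Theorem~\ref{th_compacite_S_pour_un_seul_I}: there, each rescaled difference of trends is a polynomial of degree at most $d$ that is only known to be bounded by $D$ on a set of at least $n\varepsilon$ time points; the proof extracts $d+1$ points separated by at least $\varepsilon/(4K(d+1))$ at which the function is bounded by $D$, and then uses the continuity of the Lagrange interpolation map to exhibit the set as a subset of the continuous image of a compact set. You instead use that, by definition of $\ThetaOK(M)$, every $D^{\theta,n}_x$ is uniformly bounded by $M+\|\Delta\|_\infty$ on all of $[0,1]$, and that $\Dcal(M)$ sits inside the $(d+1)$-dimensional closed subspace $\Rbb_d[X]$ of $\Ccal^0([0,1])$, where bounded closed sets are compact. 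For this particular proposition your argument is shorter and entirely sufficient (note that, as in the paper, \textbf{(Amax)} and \textbf{(Amin)} play no role); the paper's heavier machinery is not wasted, however, because Theorem~\ref{th_compacite_S_pour_un_seul_I} is needed independently for Proposition~\ref{lemma_theta_dans_I_implique_tendances_proches}, where closeness is only available on a positive fraction of the time points and a uniform bound on $[0,1]$ is precisely what has to be deduced rather than assumed. Your closing remark that the closure in $\Lbf^\infty([0,1])$ coincides with the closure in $\Ccal^0([0,1])$ because $\Rbb_d[X]$ is closed in both is the right way to reconcile the statement with the paper's definition of $\Cl(\Dcal(M))$.
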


\begin{theorem}
\label{th_conv_int_loglik}
Assume \textbf{(Amax)}, \textbf{(Amin)}, \textbf{(Aint)} and \textbf{(Areg)}.
For all $M > 0$, the function $\ell^\text{int}$ is continuous on
$\bigcup_{K'=1}^K \Sigma_{K'}^{\sigma_-} \times \Gamma^{K'} \times \Cl(\Dcal(M))^{K'} \times (\Bcal^*)^{K'}$
and almost surely,
\begin{align}
\label{eq_cvg_unif_vers_lint}
\sup_{\theta \in \Theta^\text{OK}_n(M)}
	&\left| \frac{1}{n} \ell_n(\theta)
	- \ell^\text{int}(Q^\theta,\gamma^\theta,(D^{\theta,n}_x)_x,\bbf^\theta) \right|
	\underset{n \rightarrow \infty}{\longrightarrow} 0.
\end{align}
\end{theorem}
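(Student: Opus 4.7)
The plan is to make rigorous the chain of approximations sketched after Theorem~\ref{th_localization}, namely
\[
\tfrac{1}{n}\ell_n(\theta) \simeq \tfrac{1}{n}\ell_n^{(Y,B)}(\theta) \simeq \tfrac{1}{n}\ell_n^{(Y,B)}[N](\theta) \simeq \tfrac{1}{N}\sum_{i=1}^N \ell^{\text{hom}}\!\left(\theta,n,\tfrac{i-1}{N}\right) \simeq \ell^{\text{int}}(Q^\theta,\gamma^\theta,(D^{\theta,n}_x)_x,\bbf^\theta),
\]
each step uniform in $\theta \in \ThetaOK(M)$, and then to deduce continuity of $\ell^{\text{int}}$ from that of $\ell^{\text{hom}}$ on the relevant compact set. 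The first $\simeq$ is precisely Theorem~\ref{th_approx_blocs}. After de-trending by $Z'_t := Y_t - \Tbb^*_{B_t}(t)$, the log-likelihood $\ell_n^{(Y,B)}(\theta)$ rewrites as the log-likelihood of the inhomogeneous HMM $(X_t,(Z'_t,B_t))_t$ whose time-$t$ emission density is $(z',b) \mapsto \gamma^\theta_x(z' - D^{\theta,n}_x(t/n)) \one(b = \bbf^\theta(x))$ for $x \in \Xcaltheta$.

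For fixed $N$, I introduce the stitched log-likelihood $\ell_n^{(Y,B)}[N](\theta)$ obtained by (i) restarting the hidden chain at the start of each of the $N$ equal sub-intervals $I_{i,n}$ of $\{1,\dots,n\}$ and (ii) freezing $D^{\theta,n}_x$ to its value at the left endpoint $(i-1)/N$ on $I_{i,n}$. Two errors must then be controlled. The restart error is $O(N/n)$ per interval thanks to the uniform forgetting bound induced by \textbf{(Aerg)}, hence $O(N/n)$ overall. The freezing error is bounded via \textbf{(Areg)} by
\[
\frac{1}{n}\sum_{t=1}^n L(|Z'_t|)\cdot \omega\!\left(\sup_{D \in \Dcal(M),\, |u-v|\le 1/N} |D(u)-D(v)|\right).
\]
By Proposition~\ref{prop_compacite_Dcal}, $\Cl(\Dcal(M))$ is a compact subset of $\Ccal^0([0,1])$ and is therefore equicontinuous, so its common modulus of continuity at scale $1/N$ vanishes as $N\to\infty$. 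Combined with the ergodic theorem applied to the stationary sequence $L(|Z'_t|)$ (integrable thanks to \textbf{(Amax)}, \textbf{(Aint)} and \textbf{(Areg)} applied to $|Z'_1| \le |Z^{\max}_1| + \|\Delta\|_\infty$), this shows that the freezing error converges to $0$ as first $n \to \infty$ then $N \to \infty$, uniformly in $\theta \in \ThetaOK(M)$.

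On each sub-interval $I_{i,n}$, the stitched process is a genuine homogeneous HMM with parameters $(Q^\theta,\tau(\gamma^\theta,(D^{\theta,n}_x((i-1)/N))_x),\bbf^\theta)$. The preceding lemma, which relies on~\cite{douc2004asymptotic}, yields pointwise convergence of its normalized log-likelihood to $\ell^{\text{hom}}$ at these parameters. The key additional input is that this convergence is \emph{uniform} over the compact set $\Sigma_{K'}^{\sigma_-} \times \Gamma^{K'} \times \Cl(\Dcal(M))^{K'} \times (\Bcal^*)^{K'}$ (compact by compactness of $\Gamma$, of $\Sigma_{K'}^{\sigma_-}$, of the finite set $(\Bcal^*)^{K'}$, and of $\Cl(\Dcal(M))$). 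Uniformity follows by revisiting the mixing estimates of Douc et al.~under \textbf{(Aerg)}, combined with the uniform bounds on the one-step log-densities provided by \textbf{(Amax)}, \textbf{(Amin)} and \textbf{(Aint)}; as a by-product, $\ell^{\text{hom}}$ is continuous on this compact set. Averaging over $i$ yields the third $\simeq$; the last $\simeq$ follows by Riemann integration uniformly in $\theta \in \ThetaOK(M)$, since the integrand $u \mapsto \ell^{\text{hom}}(Q^\theta,\tau(\gamma^\theta,(D^{\theta,n}_x(u))_x),\bbf^\theta)$ is equicontinuous in $u$ by composition with the equicontinuous family $\Dcal(M)$ and the continuity of $\ell^{\text{hom}}$. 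Continuity of $\ell^{\text{int}}$ itself then follows from that of $\ell^{\text{hom}}$ together with dominated convergence in the definition of $\ell^{\text{int}}$.

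The main obstacle is the uniformity over the ``moving'' family $\ThetaOK(M)$: the trends $T^\theta_x$ are themselves polynomials that can diverge, so one cannot work with them directly on $[0,n]$. Proposition~\ref{prop_compacite_Dcal} circumvents this difficulty by passing to the rescaled bounded differences $D^{\theta,n}_x$, which live in a fixed compact subset of $\Ccal^0([0,1])$. The secondary technical difficulty is making the convergence in~\cite{douc2004asymptotic} uniform over the parameter set: this requires applying their forgetting estimates simultaneously over the above compact set, which is precisely where \textbf{(Aerg)} and the envelope/lower-bound conditions \textbf{(Amax)}, \textbf{(Amin)}, \textbf{(Aint)} are exploited uniformly.
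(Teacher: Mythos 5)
Your proposal is correct and follows essentially the same route as the paper: block approximation via Theorem~\ref{th_approx_blocs}, de-trending, freezing the rescaled residual trends $D^{\theta,n}_x$ into piecewise constants using the equicontinuity of the compact set $\Cl(\Dcal(M))$ together with \textbf{(Areg)}, uniform convergence of the homogeneous block log-likelihoods over the compact parameter set via \cite{douc2004asymptotic}, and a uniform Riemann approximation. The one substantive detail the paper handles that you gloss over is that the $i$-th block's observations start at time $s=(i-1)n/N$, which grows with $n$, so the almost sure convergence of its normalized log-likelihood is not a direct consequence of the convergence on $V_1^m$; the paper resolves this (Lemma~\ref{lem_cvg_lstat}, third item) through the telescoping identity $l_m(\pi_{\mathrm{filter}},\cdot)\{V_{s+1}^{s+m}\} = l_{s+m}(\pi_U,\cdot)\{V_1^{s+m}\} - l_s(\pi_U,\cdot)\{V_1^s\}$ combined with the $\sigma_-^{-2}$ robustness to the initial distribution, rather than by restarting the chain.
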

\noindent
Proposition~\ref{prop_compacite_Dcal} and Theorem~\ref{th_conv_int_loglik} are proved in the first part of Appendix \ref{sec_integrated_likelihood}.
\bigskip

Now, assume that $\Kast$ is known and take $K = \Kast$.
The following proposition ensures that the only maximizer of the integrated log-likelihood is the true parameter.
\begin{prop}
\label{prop_identifiabilite_via_lint_0}
Assume that \textbf{(Amax)}, \textbf{(Amin)}, \textbf{(Aint)}, \textbf{(Acentering)}, \textbf{(Aid)} and \textbf{(Areg)} hold. Let $(Q,\gamma, \Dfrak, \bbf) \in \Sigma_{K}^{\sigma_-} \times \Gamma^{K} \times \text{Cl}(\Dcal)^K \times (\Bcal^*)^K$ be a maximizer of $\ell^\text{int}$, then $\Dfrak$ is constant and $( Q, \gamma,\Dfrak, \bbf) = ( Q^*, \gamma^*,\Delta, \bbf^*)$ up to permutation of the hidden states.
\end{prop}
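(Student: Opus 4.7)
The plan is to reduce the integral identifiability statement to the classical pointwise identifiability of homogeneous HMMs, then propagate it to all $u \in [0,1]$ using the polynomial structure. First, I would observe that the de-trended block-augmented process $(Y_t - \Tbb^*_{B_t}(t), B_t)_{t \geq 1}$ is itself a homogeneous HMM whose parameter is precisely $(Q^*, \tau(\gamma^*, \Delta), \bbf^*)$. Invoking the asymptotic theory of \cite{douc2004asymptotic}, the quantity $\ell^\text{hom}(Q, \tau(\gamma, \Dbf), \bbf)$ equals an entropy term plus the opposite of a Kullback–Leibler divergence rate between the true law and the law induced by $(Q, \tau(\gamma, \Dbf), \bbf)$. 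It is therefore maximal if and only if these two laws coincide, and under \textbf{(Aid)} the standard identifiability results for finite-state HMM (invertibility of $Q^*$ together with pairwise distinct translated emission densities) force $(Q, \tau(\gamma, \Dbf), \bbf) = (Q^*, \tau(\gamma^*, \Delta), \bbf^*)$ up to a permutation $\tau$ of the hidden states.

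Now, for a maximizer $(Q, \gamma, \Dfrak, \bbf)$ of $\ell^\text{int}$, the inequality $\ell^\text{hom}(Q, \tau(\gamma, \Dfrak(u)), \bbf) \leq \ell^\text{hom}(Q^*, \tau(\gamma^*, \Delta), \bbf^*)$ for every $u$ together with equality of integrals forces pointwise equality for almost every $u$, each time with a permutation $\tau_u$ realizing the identification. Since the symmetric group $S_K$ is finite, one permutation $\tau$ works on a set $U_\tau \subset [0,1]$ of positive Lebesgue measure. The identifications involving $Q$ and $\bbf$ do not depend on $u$, so $Q(\tau(x), \tau(x')) = Q^*(x, x')$ and $\bbf(\tau(x)) = \bbf^*(x)$ hold unconditionally, while the emission identification rewrites, for every $z \in \Rbb$ and $u \in U_\tau$, as $\gamma_{\tau(x)}(z) = \gamma^*_x(z + \Dfrak_{\tau(x)}(u) - \Delta(x))$.

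The crux is to deduce that $\Dfrak$ is constant on $[0,1]$. For any $u, u' \in U_\tau$, comparing the preceding relations shows that $\gamma^*_x$ is invariant under the translation by $\Dfrak_{\tau(x)}(u) - \Dfrak_{\tau(x)}(u')$; but a non-zero translation invariance is impossible for an integrable probability density on $\Rbb$, so $\Dfrak_{\tau(x)}$ is constant on $U_\tau$. By Proposition~\ref{prop_compacite_Dcal}, and since $\Rbb_d[X]$ is finite-dimensional and hence closed in $\Ccal^0([0,1])$, every element of $\Cl(\Dcal)$ is a polynomial of degree at most $d$. A polynomial that takes a constant value on a set of positive Lebesgue measure (hence on an infinite set) must be the constant polynomial, so $\Dfrak_{\tau(x)}$ is constant on $[0,1]$, equal to some $c_{\tau(x)} \in \Rbb$.

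It remains to identify $c_{\tau(x)}$. The identity $\gamma_{\tau(x)}(\cdot) = \gamma^*_x(\cdot + c_{\tau(x)} - \Delta(x))$ combined with \textbf{(Acentering)} (both densities have median $0$) forces $c_{\tau(x)} - \Delta(x) = 0$, hence $\gamma_{\tau(x)} = \gamma^*_x$ and $\Dfrak_{\tau(x)} = \Delta(x)$. Together with the earlier relations on $Q$ and $\bbf$, this yields $(Q, \gamma, \Dfrak, \bbf) = (Q^*, \gamma^*, \Delta, \bbf^*)$ up to the permutation $\tau$, completing the argument. The principal obstacle is the first step: one must carefully verify that the Douc–Moulines–Rydén analysis and the associated identifiability result for finite-state HMM apply to the block-augmented homogeneous HMM and that \textbf{(Aid)}, in either of its equivalent forms, is the right hypothesis to guarantee uniqueness modulo label permutations.
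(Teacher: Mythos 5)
Your proof is correct and follows essentially the same route as the paper: reduce maximality of the integral to almost-everywhere maximality of the integrand, invoke identifiability of the block-augmented homogeneous HMM under \textbf{(Aid)}, rule out a non-constant $\Dfrak$ via the impossibility of translation invariance for a probability density, and pin down the constant with the median argument from \textbf{(Acentering)} and \textbf{(Amin)}. The only difference is cosmetic: where you extend constancy of $\Dfrak_{\tau(x)}$ from a positive-measure set $U_\tau$ to all of $[0,1]$ using the polynomial structure of $\Cl(\Dcal)$, the paper instead first shows the permutation is independent of $u$ (by finiteness of the symmetric group and continuity of $u \mapsto \gamma_x(\cdot - D_x(u))$ under \textbf{(Areg)}) and then works with the identity at every $u \in [0,1]$.
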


The proof of Proposition \ref{prop_identifiabilite_via_lint_0} is in the second part of Appendix \ref{sec_integrated_likelihood}.\bigskip

We may now prove the consistency of the maximum likelihood estimator. By Theorem~\ref{th_localization}, there exists $M > 0$ such that almost surely, there exists a (random) integer $n_\text{loc}$ such that for all $n \geq n_\text{loc}$, $\hat{\theta}_n \in \Theta^\text{OK}_n(M)$. For $n \geq n_\text{loc}$, let
\begin{equation*}
\begin{cases}
(Q_n,\gamma_n) := (Q^{\hat{\theta}_n},\gamma^{\hat{\theta}_n}), \\
\Dfrak_n := (D^{\hat{\theta}_n, n}_x)_{x \in [K]}, \\
\bbf_n := \bbf^{\hat{\theta}_n}.
\end{cases}
\end{equation*}

For all $n \geq n_\text{loc}$, $(Q_n,\gamma_n, \Dfrak_n, \bbf_n) \in \Sigma_{K}^{\sigma_-} \times \Gamma^{K} \times \text{Cl}(\Dcal)^K \times (\Bcal^*)^K$, and this set is compact by compactness of $\Gamma$ and Proposition~\ref{prop_compacite_Dcal}. Let $(Q,\gamma,\Dfrak,\bbf)$ be the limit of a convergent subsequence $(Q_{\varphi(n)},\gamma_{\varphi(n)}, \Dfrak_{\varphi(n)}, \bbf_{\varphi(n)})_{n \geq 1}$, then by continuity of $\ell^\text{int}$ and by the uniform convergence of equation~\eqref{eq_cvg_unif_vers_lint},
\begin{equation*}
\frac{1}{\varphi(n)} \ell_{\varphi(n)}(\hat{\theta}_{\varphi(n)})
	\underset{n \rightarrow \infty}{\longrightarrow} \ell^\text{int}(Q,\gamma,\Dfrak,\bbf) \leq \ell^\text{int}(Q^*,\gamma^*,\Delta,\bbf^*) = \ell(\theta^*),
\end{equation*}
by Proposition~\ref{prop_identifiabilite_via_lint_0}, and by definition of the maximum likelihood estimator
\begin{equation*}
\frac{1}{\varphi(n)} \ell_{\varphi(n)}(\theta^*)
	\leq \frac{1}{\varphi(n)} \ell_{\varphi(n)}(\hat{\theta}_{\varphi(n)}).
\end{equation*}

Hence $\ell^\text{int}(Q,\gamma,\Dfrak,\bbf) = \ell(\theta^*) = \ell^\text{int}(Q^*,\gamma^*,\Delta,\bbf^*)$, which means that $(Q,\gamma,\Dfrak,\bbf) = (Q^*,\gamma^*,\Delta,\bbf^*)$ up to permutation of the hidden states by Proposition~\ref{prop_identifiabilite_via_lint_0}. Thus, the MLE sequence $(Q_n,\gamma_n, \Dfrak_n, \bbf_n)_{n \geq 1}$ has only one possible limit: the true parameter. Theorem~\ref{th_MLEconsistency} follows.

\section{Simulations}
\label{sec_simulations}

\subsection{First experiment: rate of convergence}

In this first example, we consider the following HMM with trends $(X_t,Y_t)_{t\geq 1}$
with $\Kast = 3$ states. The emission distributions are centered Gaussian distributions with respective variances $(\sigma_1^*)^2 = 5$, $(\sigma_2^*)^2 = 10$ and $(\sigma_3^*)^2 = 15$. The trends are given by
$$T_1^*(t) = \alpha (t + 10^4)^2,\quad T_2^*(t) = T_1^*(t) - 5,\quad T_3^*(t) = 3T_1^*(t),$$
with $\alpha = 10^{-8}$. Thus $T_1^*$ and $T_2^*$ belong to the same block while $T_3^*$ diverges from the two other trends. Finally, the transition matrix is
$$Q^* = \begin{pmatrix}
0.7 & 0.2 & 0.1\\
0.2 & 0.6 & 0.2\\
0.1 & 0.1 & 0.8
\end{pmatrix}.$$

The model is as follows. The number of states $\Kast=3$ is assumed known. The set $\Gamma$ of possible emission distributions is taken as the set of centered Gaussian distributions, without constraint on the variance. The lower bound on the transition probabilities is chosen as $\sigma_- = 0$. Even if $\Gamma$ is not compact and $\sigma_-$ is not positive, contrary to the theoretical result, the maximum likelihood estimator is still able to recover the parameters. Finally, the maximum degree of the trends is $d=4$.
Note that the model is over-parametrized, as it contains all the trends in $\mathbb{R}_4[X]$ while the degree of the true trends is only $2$. This reflects the fact that in practice, we may not know the degree of the true trends.

We simulate $10$ independent realizations $(X_t,Y_t)_{1\leq t\leq n_{\max}}$ with $n_{\max} = 10^5$. Figure \ref{fig_ex2_data} shows the data points corresponding to one of these $10$ trajectories.
Figure \ref{fig_conv_trends} illustrates the fact that for each $x \in [\Kast]$, $\|T^*_x - T^{\hat{\theta}_n}_x\|_{\infty,[0,n]}\underset{n\to\infty}{\longrightarrow}0$, where $T^{\hat{\theta}_n}_x$ is the maximum likelihood estimator of $T^*_x$ computed from the first $n$ observations.

\begin{figure}[p]
\centering
\includegraphics[scale = 0.5]{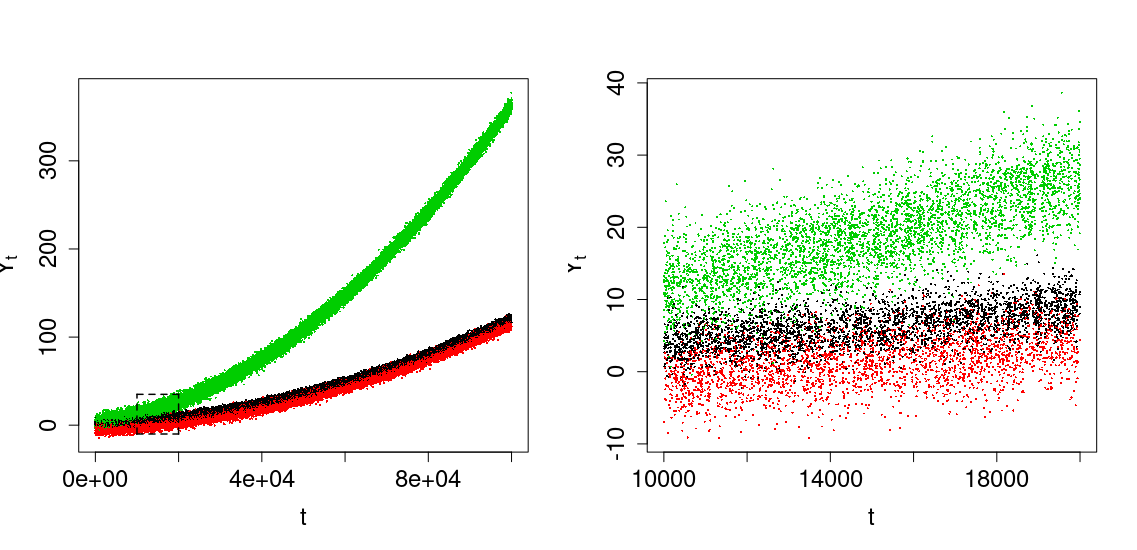}
\caption{Left panel: a simulated trajectory of length $100000$ of the observations of the HMM with trends $(X_t,Y_t)_{t\geq 1}$. Each color corresponds to a different state (state $1$: black, state $2$: red, state $3$: green). Right panel: a focus on $10000\leq t\leq 20000$.}\label{fig_ex2_data}
\end{figure} 

\begin{figure}[p]
\centering
\includegraphics[scale = 0.5]{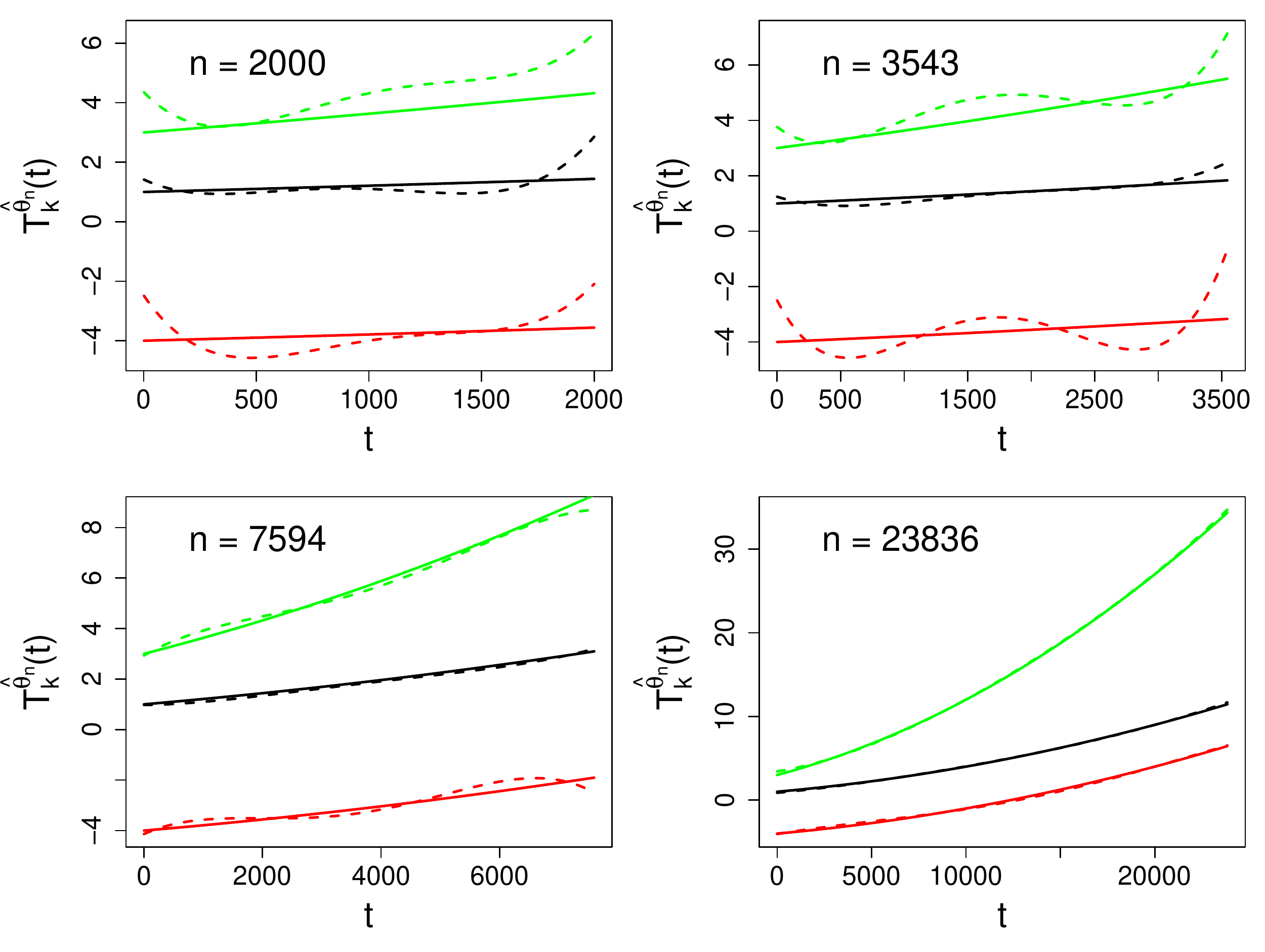}
\caption{Convergence of the estimated trends $T^{\hat{\theta}_n}_x$ (dashed lines) to the true trends $T^*_x$ (solid lines).}\label{fig_conv_trends}
\end{figure} 

For each simulated trajectory and for several $n\in\{1,\dots,n_{\max}\}$, we compute:
\begin{itemize}
\item[$\bullet$] The errors on the trends $\|T^*_x - T^{\hat{\theta}_n}_x\|_{\infty,[0,n]}$, $1\leq x\leq \Kast$,
\item[$\bullet$] The error on the transition matrix $\|Q^* - Q^{\hat{\theta}_n}\|_F$, where $\|\cdot\|_F$ denotes the Frobenius norm,
\item[$\bullet$] The error on the variances $\max_x|(\sigma^*_x)^2 - \hat{\sigma}^2_x|$.
\end{itemize}

We plotted the logarithm of these errors against $\log n$ (see Figure \ref{fig_ex2_errors}). These graphs suggest a linear decrease of the logarithm of the errors with respect to $\log n$. Therefore, we can conjecture that as $n$ tends to infinity, $n^\alpha \|Q^* - Q^{\hat{\theta}_n}\|_F$ is bounded in probability for some $\alpha > 0$, and that the same property holds for the other parameters (possibly with a different $\alpha$). Based on this experiment, it seems reasonable to conjecture that the maximum likelihood estimator achieves the parametric rate of convergence $\alpha = 0.5$. However, proving this claim would require further investigation that go beyond the scope of this paper. It is also worth noting that in this example, we chose simple parametric emission distributions, whereas the model described in Section \ref{sec_model} for which our main result holds only requires the set of emission densities to be compact, not necessarily finite-dimensional. 

\begin{figure}[t]
\centering
\includegraphics[width = \textwidth]{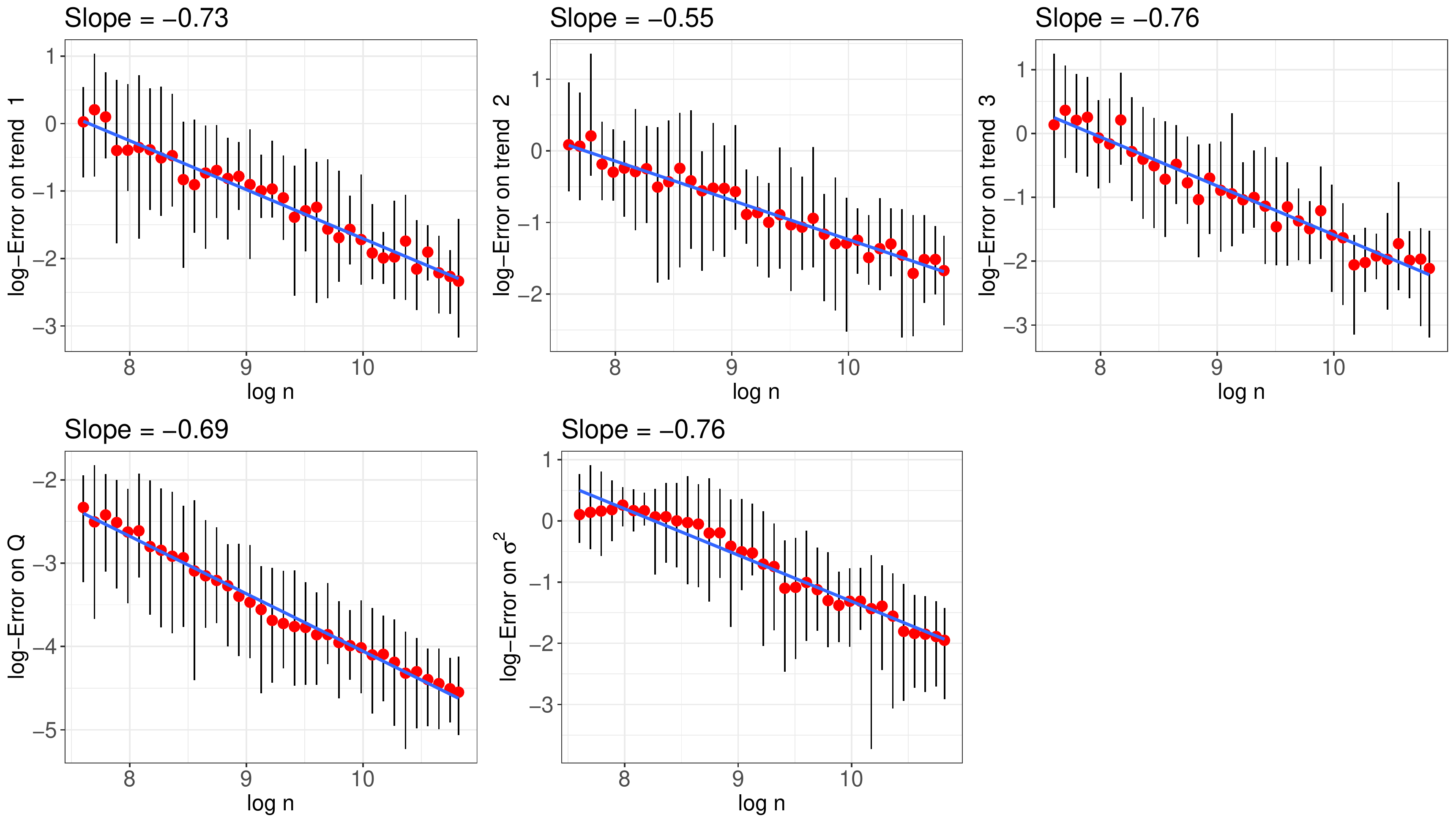}
\caption{Rate of convergence of the maximum likelihood estimator. The red dots are the means of the log-errors across the $10$ simulations, the blue line is the linear fit to these means and the black bars represent the range of the log-errors across the simulations.}\label{fig_ex2_errors}
\end{figure}

\subsection{Second experiment: the trends have not diverged yet}

In this section, we consider a HMM with trends whose trends have not diverged during the experiment, which is an assumption on which the proofs rely heavily. We show that the MLE is still able to recover the trends and the homogeneous parameter accurately. This is especially relevant for practical applications where one may not have enough time to see the trends diverge.

Fix the maximum number of observations to $n=10000$ and consider the following HMM with trends $(X_t,Y_t)_{t\geq 1}$ with $\Kast=2$ states. The trends are defined by $T^*_1(t) = 0$ and $T^*_2(t) = 3\left(\frac{t-\frac{n}{2}}{\frac{n}{2}}\right)^2-1$. The emission distributions are centered gaussian distributions with respective variances $(\sigma_1^*)^2 = 1$ and $(\sigma_2^*)^2 = 2$ and the transition matrix is
$$Q^* = \begin{pmatrix}
0.7 & 0.3 \\
0.2 & 0.8 \\
\end{pmatrix}.$$

\begin{figure}
\centering
\includegraphics[scale=0.5]{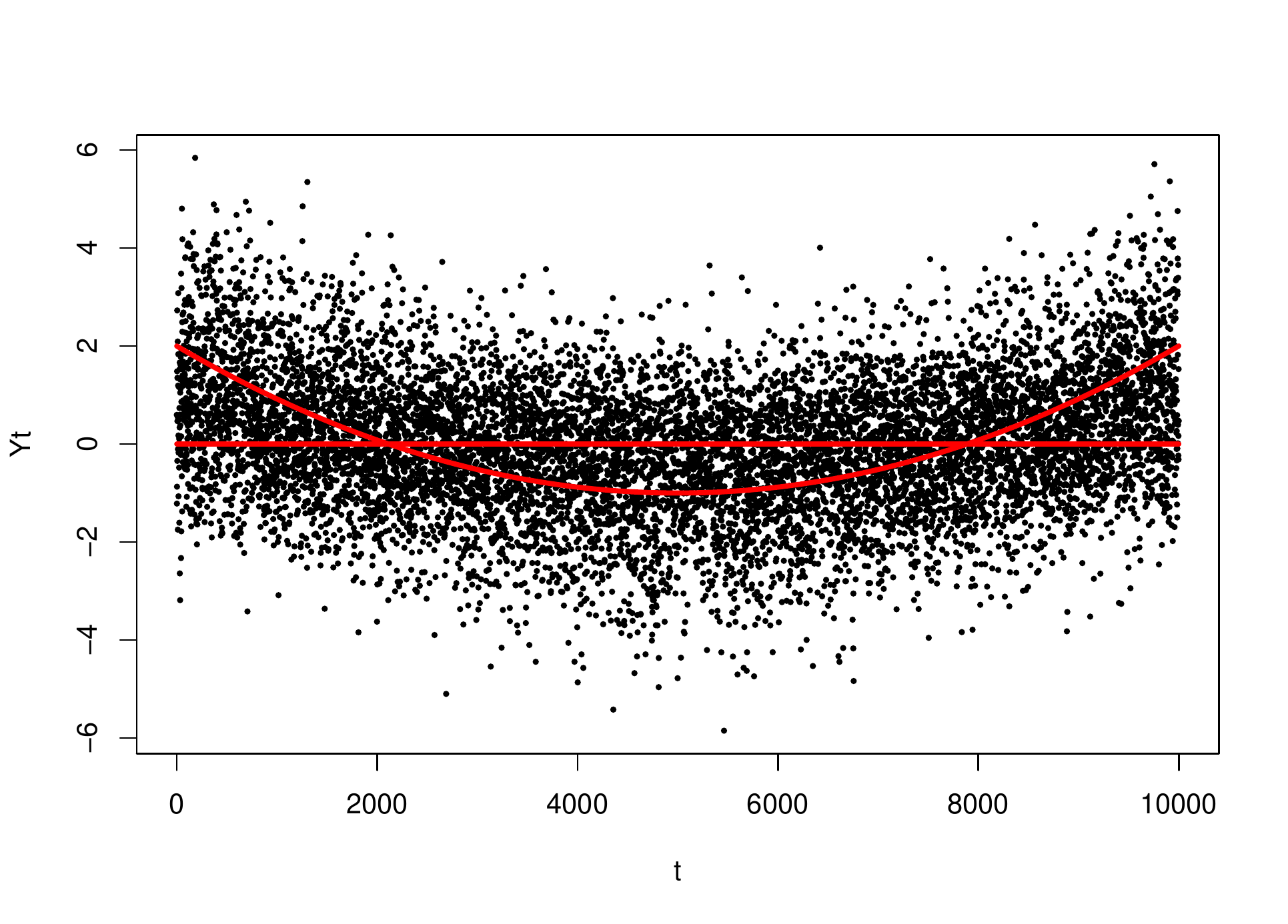}
\caption{Simulated data points. The red lines are the true trends.}
\label{fig_simu_data}
\end{figure}

Figure \ref{fig_simu_data} shows the simulated observations $(Y_t)_{1\leq t\leq n}$ as well as the true trends $T^*_1$ and $T^*_2$. The two states are not clearly separated: the trends will eventually diverge, but we don't have enough observations to make use of this. However, the maximum likelihood estimator is able to recover the trends even in this situation.

The model is as follows. The number of states $\Kast = 2$ is assumed known. As in the previous section, we take $\sigma_- = 0$, $\Gamma$ as the set of centered Gaussian distributions and $d = 4$. 

\begin{figure}
\centering
\includegraphics[scale=0.5]{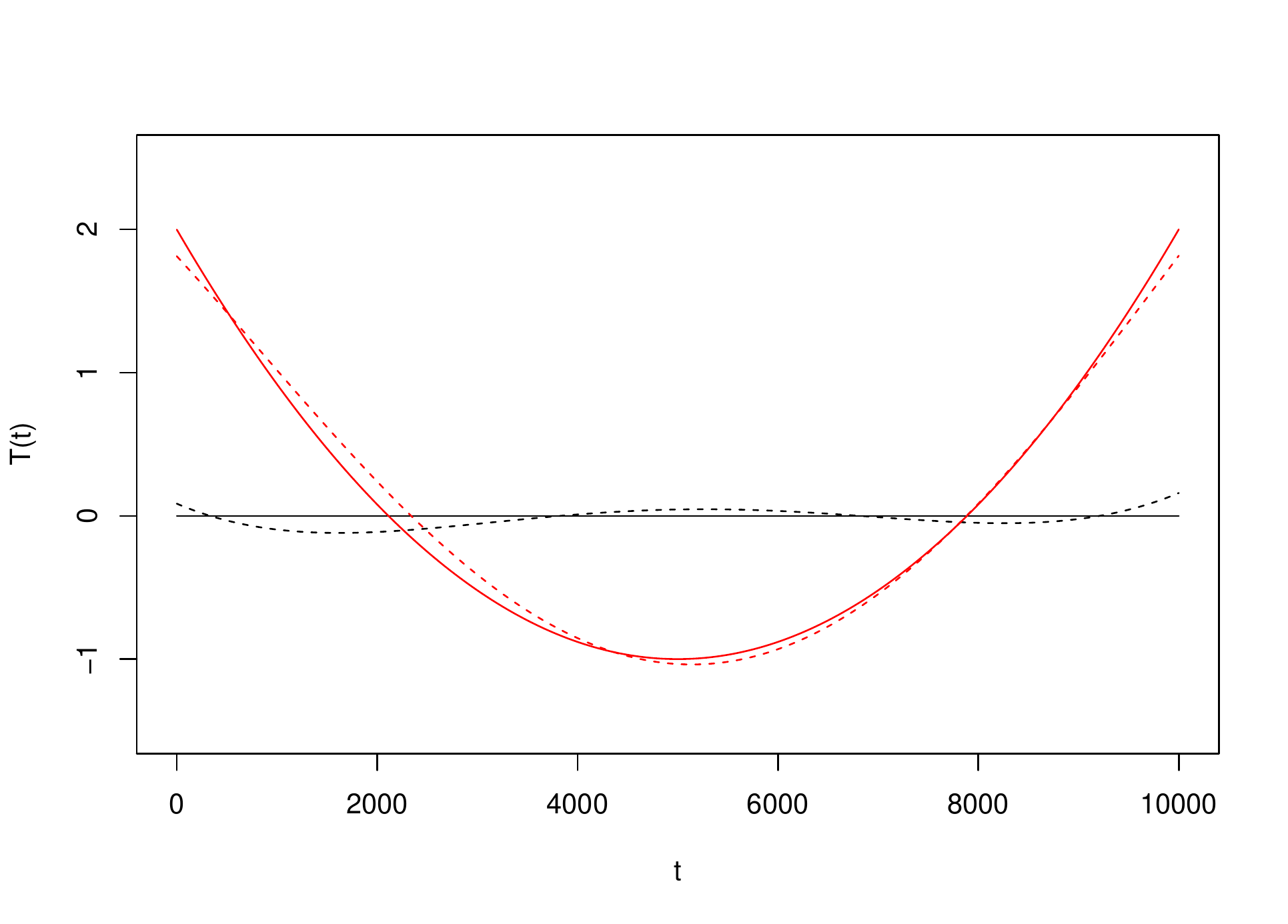}
\caption{True (full lines) and estimated (dashed lines) trends.}\label{fig_estim_result}
\end{figure}

Figure \ref{fig_estim_result} shows the estimated trends obtained 
using the EM algorithm~\citep{EM} together with the true trends. The estimated transition matrix and variances are
\begin{equation*}
\hat{Q} = \begin{pmatrix}
0.74 & 0.26 \\
0.22 & 0.78
\end{pmatrix},\quad
(\hat{\sigma}^2_1,\hat{\sigma}^2_2) = (1.13,2.11).
\end{equation*}

The precision of these estimations can be further improved by increasing the number of data points. The trends and the homogeneous parameter are already well estimated. An intuition to explain this convergence even when the trends have not diverged is that the trends vary slowly enough for the homogeneous approximation of Theorem~\ref{th_conv_int_loglik} to hold.

\appendix

\section{Block approximation}
\label{sec_block}

In this appendix, we shall prove Theorem \ref{th_approx_blocs} and Corollary \ref{cor_Arate}.

Let us begin with a few definitions. Let
\begin{equation*}
E : t \in \Nbb^* \longmapsto \inf_{b, b' \in \Bcal^* \text{ s.t. } b \neq b'} |\Tbb^*_b(t) - \Tbb^*_{b'}(t)|
\end{equation*}
be the minimum difference between the reference trends of two distinct blocks at time $t$. Note that $E(t)$ diverges to $+\infty$ since the true trends are polynomials.

Let $M > 0$ and
\begin{equation*}
n_1(M) := \inf \{ n \in \Nbb^* \, | \, \forall t \geq n, E(t) > 2M \}.
\end{equation*}

Let $n \geq n_1(M)$, so that the tubes of size $M$ at time $n$ have no intersection.
Let $\theta \in \ThetaOK(M - \|\Delta\|_\infty)$ and $x \in \Xcaltheta$, where $\Delta$ is as in Definition~\ref{def_trend_blocks}. Then, by equation~\eqref{eq_tendances_parametres_pas_toutes_seules0}, there exists a block $\bbf^\theta(x) \in \Bcal^*$ (which is unique since $n \geq n_1(M)$) such that $T^\theta_x$ is in the tube of size $M$ of $\bbf^\theta(x)$ a time $n$.
In particular, 
\begin{equation}
\label{eq_majoration_ecart_vraieettheta}
\sup_{t \in \{1, \dots, n\}} | \Tbb^*_{\bbf^\theta(x)}(t) - T^\theta_{x}(t) | \leq M.
\end{equation}

The proof aims to make the following approximations rigorous.
\begin{align*}
\log p^\theta_{Y_t | Y_1^{t-1}}(Y_t | Y_1^{t-1})
	&\approx \log p^\theta_{Y_t, B_t | Y_1^{t-1}} (Y_t, B_t | Y_1^{t-1}) \\
	&\approx \log p^\theta_{Y_t, B_t | Y_1^{t-1}, B_1^{t-1}} (Y_t, B_t | Y_1^{t-1}, B_1^{t-1}),
\end{align*}
hence the two following steps.

\subsection{Step 1: introduction of the trend block \texorpdfstring{$B_t$}{} in the log-likelihood}

Assume \textbf{(Amax)}, \textbf{(Amin)} and \textbf{(Aint)}. Let us show that the following quantity tends to $0$ uniformly in $\theta$.
\begin{multline*}
\log p^\theta_{Y_t | Y_1^{t-1}}(Y_t | Y_1^{t-1})
	- \log p^\theta_{Y_t, B_t | Y_1^{t-1}}(Y_t, B_t | Y_1^{t-1}) \\
	= \log \left( \frac{\displaystyle
			\sum_{x_t \in \Xcaltheta} p^\theta(X_t = x_t | Y_1^{t-1}) \gamma_{x_t}(Y_t - T^\theta_{x_t}(t))
		}{\displaystyle
			\sum_{x_t \in \Xcaltheta} p^\theta(X_t = x_t | Y_1^{t-1}) \gamma_{x_t}(Y_t - T^\theta_{x_t}(t)) \one_{\bbf^\theta(x_t) = B_t}
		} \right).
\end{multline*}

This can be rewritten as
\begin{align}
\nonumber \log p^\theta_{Y_t | Y_1^{t-1}}(Y_t | Y_1^{t-1})
	&- \log p^\theta_{Y_t, B_t | Y_1^{t-1}}(Y_t, B_t | Y_1^{t-1}) \\
\nonumber     &= \log ( p^\theta_{B_t | Y_1^t}(B_t | Y_1^t)^{-1} ) \\
    \label{eq_block1_interpretationPreuve}
    &= \log \left( \left\{ 1 - p^\theta_{B_t | Y_1^t}\left(\{b \in \Bcal^* \text{ s.t. } b \neq B_t\} | Y_1^t\right) \right\}^{-1} \right).
\end{align}

Intuitively, when $t$ is large, since the trends get further from one another, the probability to get the wrong block converges to zero.

\begin{lemma}
\label{lemma_approx_logV_blocPresent}
Assume \textbf{(Amax)} and \textbf{(Amin)}. Then for all $t \in \Nbb^*$,
\begin{align*}
&\sup_{\theta \in \Theta^\text{OK}_n(M-\|\Delta\|_{\infty})}
\left| \log p^\theta_{Y_t | Y_1^{t-1}}(Y_t | Y_1^{t-1})
	- \log p^\theta_{Y_t, B_t | Y_1^{t-1}}(Y_t, B_t | Y_1^{t-1}) \right| \\
	&\quad \leq \log\left( \left\{1 - \frac{g(\{E(t) - M - Z^{\max}_t - \|\Delta\|_\infty\}_+)}{\sigma_- m(M + Z^{\max}_t + \|\Delta\|_\infty)}\right\}_+^{-1} \wedge \frac{g(0)}{\sigma_- m( M + Z^{\max}_t + \|\Delta\|_\infty)} \right) \\
    &\quad =: h(E(t), Z^{\max}_t)
\end{align*}
with the convention $\{z\}_+^{-1} = +\infty$ if $z \leq 0$.
\end{lemma}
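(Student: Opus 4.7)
The plan is to exploit the rewriting in Equation~\eqref{eq_block1_interpretationPreuve}, which reduces the task to lower bounding the filtered probability $p^\theta_{B_t \mid Y_1^t}(B_t \mid Y_1^t)$ uniformly in $\theta \in \ThetaOK(M - \|\Delta\|_\infty)$. The minimum in the stated bound corresponds to two different lower bounds on this probability: a ``crude'' bound valid for every $t$, and a ``fine'' bound exploiting the divergence $E(t)$ between blocks. The main technical ingredient is the same in both cases, namely controlling $\gamma_{x_t}(Y_t - T^\theta_{x_t}(t))$ from below on the good block and from above on bad blocks, using the triangle inequality together with the envelope functions $g$ and $m$ from \textbf{(Amax)} and \textbf{(Amin)}.

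First, I would write
\[
p^\theta_{B_t \mid Y_1^t}(B_t \mid Y_1^t) = \frac{\sum_{x_t:\,\bbf^\theta(x_t)=B_t} p^\theta(X_t = x_t \mid Y_1^{t-1})\,\gamma^\theta_{x_t}(Y_t - T^\theta_{x_t}(t))}{\sum_{x_t} p^\theta(X_t = x_t \mid Y_1^{t-1})\,\gamma^\theta_{x_t}(Y_t - T^\theta_{x_t}(t))},
\]
and observe that by \textbf{(Aerg)}, the predictive probability $p^\theta(X_t = x_t \mid Y_1^{t-1})$ is bounded below by $\sigma_-$. Because $\theta \in \ThetaOK(M-\|\Delta\|_\infty)$, Equation~\eqref{eq_vraies_tendances_pas_toutes_seules0} applied to $\xast = X_t$ produces some $x \in \Xcaltheta$ with $\|T^*_{X_t} - T^\theta_x\|_{\infty,[0,n]} \leq M-\|\Delta\|_\infty$; combined with $|T^*_{X_t}(t) - \Tbb^*_{B_t}(t)| \leq \|\Delta\|_\infty$, this forces $\bbf^\theta(x) = B_t$. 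For this $x$, the triangle inequality gives $|Y_t - T^\theta_x(t)| \leq M + \|\Delta\|_\infty + Z^{\max}_t$, hence $\gamma^\theta_x(Y_t - T^\theta_x(t)) \geq m(M + \|\Delta\|_\infty + Z^{\max}_t)$ by \textbf{(Amin)}. The denominator is trivially at most $g(0)$ by \textbf{(Amax)}, producing the crude bound
\[
p^\theta_{B_t \mid Y_1^t}(B_t \mid Y_1^t) \;\geq\; \frac{\sigma_-\, m(M + \|\Delta\|_\infty + Z^{\max}_t)}{g(0)},
\]
which yields the right-hand entry of the minimum after taking $-\log$.

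Second, for the fine bound I would upper bound the complementary probability $\sum_{b \neq B_t} p^\theta_{B_t \mid Y_1^t}(b \mid Y_1^t)$. For any $x_t$ with $\bbf^\theta(x_t) \neq B_t$, one has $|\Tbb^*_{\bbf^\theta(x_t)}(t) - \Tbb^*_{B_t}(t)| \geq E(t)$, whence
\[
|Y_t - T^\theta_{x_t}(t)| \;\geq\; E(t) - \|\Delta\|_\infty - Z^{\max}_t - M,
\]
so that $\gamma^\theta_{x_t}(Y_t - T^\theta_{x_t}(t)) \leq g(\{E(t) - M - \|\Delta\|_\infty - Z^{\max}_t\}_+)$ by \textbf{(Amax)}. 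Summing over $x_t$ with $\bbf^\theta(x_t)\neq B_t$ (the probabilities $p^\theta(X_t=x_t\mid Y_1^{t-1})$ sum to at most $1$) and lower bounding the full denominator by the single good-block term as in the first step, I obtain
\[
\sum_{b \neq B_t} p^\theta_{B_t \mid Y_1^t}(b \mid Y_1^t) \;\leq\; \frac{g(\{E(t) - M - \|\Delta\|_\infty - Z^{\max}_t\}_+)}{\sigma_-\, m(M + \|\Delta\|_\infty + Z^{\max}_t)}.
\]
Taking $1$ minus this, truncating at $0$ to handle the regime where the bound is vacuous, and applying $-\log$ gives the left-hand entry of the minimum.

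No step is really an obstacle; the only delicate point is choosing the triangle-inequality chain that connects $Y_t$ to $T^\theta_{x_t}(t)$ via $T^*_{X_t}(t)$ and a reference trend $\Tbb^*_{\cdot}(t)$, keeping careful track of where the offset $\|\Delta\|_\infty$ enters. Once the two estimates are in hand, the claimed bound follows by taking their minimum, which is valid because both are upper bounds on the same quantity, and because the fine bound may degenerate for small $t$ (when $E(t)$ is too small), in which case the crude bound takes over.
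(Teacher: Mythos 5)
Your proposal is correct and follows essentially the same route as the paper: both proofs reduce the difference of log-likelihoods to the posterior block probability $p^\theta_{B_t\mid Y_1^t}(B_t\mid Y_1^t)$, derive the crude bound from $\gamma \leq g(0)$ above and the good-block term $\sigma_- m(M+\|\Delta\|_\infty+Z^{\max}_t)$ below, and derive the fine bound by controlling the posterior mass of the wrong blocks via $g(\{E(t)-M-\|\Delta\|_\infty-Z^{\max}_t\}_+)$ over the same denominator. The triangle-inequality chains through $T^*_{X_t}$ and the reference trends, including the placement of the $\|\Delta\|_\infty$ offset, match the paper's equation~\eqref{eq_majoration_ecart_vraieettheta} and the control of the term \textbf{(*)}.
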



\begin{proof}
Proof in Section~\ref{sec_proof_approx_logV_blocPresent}.
\end{proof}

Summing in $t$ results in
\begin{align*}
\sup_{\theta \in \Theta^\text{OK}_n(M-\|\Delta\|_{\infty})}
\left| \frac{1}{n} \ell_n(\theta) - \frac{1}{n} \sum_{t=1}^n \log p^\theta_{Y_t, B_t | Y_1^{t-1}}(Y_t, B_t | Y_1^{t-1}) \right|
	\leq \frac{1}{n} \sum_{t=1}^n h(E(t), Z^{\max}_t).
\end{align*}

The function $e \in \Rbb_+ \longmapsto h(e,z)$ is non-negative, non-increasing for all $z \in \Rbb$ and tends to $0$ as $e$ tends to $+\infty$. Moreover, under Assumption \textbf{(Aint)}, $h(0, Z^{\max}_1)$ is integrable by definition of $h$. Thus, under Assumption \textbf{(Aint)}, the law of large numbers implies that for all $E > 0$
\begin{align*}
\limsup_{n \rightarrow \infty} \sup_{\theta \in \Theta^\text{OK}_n(M-\|\Delta\|_{\infty})}
\left| \frac{1}{n} \ell_n(\theta) - \frac{1}{n} \sum_{t=1}^n \log p^\theta_{Y_t, B_t | Y_1^{t-1}}(Y_t, B_t | Y_1^{t-1}) \right|
	\leq \Ebb^*[h(E, Z^{\max}_1)].
\end{align*}

The dominated convergence theorem ensures that $\Ebb^*[h(E, Z^{\max}_1)] \longrightarrow 0$ as $E \longrightarrow +\infty$. Thus, we obtain the following uniform approximation of the normalized log-likelihood:
\begin{align}
\label{eq_bilan_ajoutblocpresent}
\sup_{\theta \in \Theta^\text{OK}_n(M-\|\Delta\|_{\infty})}
\left| \frac{1}{n} \ell_n(\theta) - \frac{1}{n} \sum_{t=1}^n \log p^\theta_{Y_t, B_t | Y_1^{t-1}}(Y_t, B_t | Y_1^{t-1}) \right|
	\underset{n \rightarrow \infty}{\longrightarrow} 0.
\end{align}

\subsection{Step 2: conditioning on the blocks \texorpdfstring{$B_1^{t-1}$}{}}

Assume \textbf{(Amax)} and \textbf{(Amin)}. The following lemma is a consequence of the lower bound on the transition matrices, see for instance Lemma 1 and Corollary 1 of \cite{douc2004asymptotic}.

\begin{lemma}[Exponential forgetting]
There exists $C > 0$ such that for all $n \in \Nbb^*$, $y_1^n \in \Rbb^n$, $\theta \in \Theta$ and for all probability measures $\pi, \pi'$ on $\Xcaltheta$:
\begin{equation*}
\sum_{x \in \Xcaltheta} | p^\theta_{X_n | Y_1^{n-1}}(x | y_1^{n-1}, X_0 \sim \pi) - p^\theta_{X_n | Y_1^{n-1}}(x | y_1^{n-1}, X_0 \sim \pi') | \leq C \rho^n
\end{equation*}
with $\rho = 1 - \frac{\sigma_-}{1 - \sigma_-} \in (0,1)$.
\end{lemma}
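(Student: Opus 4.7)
My plan is to follow the classical Baum--Welch filter contraction argument, exploiting the uniform lower bound $\sigma_-$ on the transition coefficients provided by \textbf{(Aerg)}. For a probability measure $\mu$ on $\Xcaltheta$, I would denote
$\phi_t^{(\mu)}(x) := p^\theta_{X_t \mid Y_1^{t-1}}(x \mid y_1^{t-1}, X_0 \sim \mu)$
and exploit the standard nonlinear forward recursion
$\phi_{t+1}^{(\mu)}(x) = \frac{\sum_{y \in \Xcaltheta} \phi_t^{(\mu)}(y)\, \gamma^\theta_y(y_t - T^\theta_y(t))\, Q^\theta(y, x)}{\sum_{y \in \Xcaltheta} \phi_t^{(\mu)}(y)\, \gamma^\theta_y(y_t - T^\theta_y(t))},$
initialized at $\phi_1^{(\mu)} = \mu Q^\theta$.

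The first real step is to observe that \textbf{(Aerg)} immediately produces a Doeblin-type minorization on the predicted filter: for every $t \geq 1$, every $\mu$, and every $x$,
$\phi_{t+1}^{(\mu)}(x) \geq \sigma_-,$
since $Q^\theta(y,x) \geq \sigma_-$ uniformly in $y$ bounds the numerator below by $\sigma_-$ times the denominator. This single inequality is the whole point of the assumption: it converts the lower bound on $Q^\theta$ into a lower bound on the filter itself, so that the normalizing constants appearing in the recursion cannot degenerate.

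The second step is to convert this minorization into a one-step contraction in total variation. For two priors $\pi, \pi'$, I would write the difference $\phi_{t+1}^{(\pi)}(x) - \phi_{t+1}^{(\pi')}(x)$ as a single fraction and then split $Q^\theta(y,x) = \sigma_- + (Q^\theta(y,x) - \sigma_-)$ in the numerator. The $y$-constant piece $\sigma_-$ factors out of the sum and cancels after summing the differences against the two filters (each filter sums to $1$), leaving only the contribution of the nonnegative residual $Q^\theta(y,x) - \sigma_-$, which is dominated by $1 - \sigma_-$ times the numerator bounds. Combined with the lower bound on the denominators from the preceding step, this yields
$\| \phi_{t+1}^{(\pi)} - \phi_{t+1}^{(\pi')} \|_{\mathrm{TV}} \leq \rho\, \| \phi_t^{(\pi)} - \phi_t^{(\pi')} \|_{\mathrm{TV}}, \qquad \rho = 1 - \frac{\sigma_-}{1 - \sigma_-},$
which is precisely the contraction factor announced. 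Iterating from $t = 1$ and using $\| \phi_1^{(\pi)} - \phi_1^{(\pi')} \|_{\mathrm{TV}} \leq 2$ gives the result with $C = 2/\rho$, uniformly in $\theta \in \Theta$, $y_1^{n-1} \in \mathbb{R}^{n-1}$ and $\pi, \pi'$, as required.

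The main obstacle is that the filter update is nonlinear in the filter because of the normalization by the predictive density of $y_t$, so the classical Dobrushin contraction coefficient for stochastic matrices cannot be invoked directly. The two ingredients that overcome this are (i) the Doeblin lower bound on the filter, which keeps the normalization bounded away from zero uniformly in the data, and (ii) the algebraic splitting of $Q^\theta$ that isolates the $y$-free part and lets it cancel against the summation of signed differences. The uniformity of the constants in $\theta$ (and in the particular values of $y_1^{n-1}$) comes for free: both $\sigma_-$ and the splitting above are intrinsic to \textbf{(Aerg)} and do not depend on $\theta$ or on the observations.
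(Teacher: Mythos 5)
There is a genuine gap at the crux of your argument: the claimed one-step total-variation contraction of the filter recursion. Writing $\tilde\phi_t^{(\mu)}(y) \propto \phi_t^{(\mu)}(y)\gamma^\theta_y(y_t - T^\theta_y(t))$ for the updated filter, your splitting of $Q^\theta(y,x)$ correctly gives
\begin{equation*}
\sum_x \bigl|\phi_{t+1}^{(\pi)}(x) - \phi_{t+1}^{(\pi')}(x)\bigr|
\;\leq\; (1 - K^\theta\sigma_-)\,\sum_y \bigl|\tilde\phi_t^{(\pi)}(y) - \tilde\phi_t^{(\pi')}(y)\bigr|,
\end{equation*}
i.e.\ a contraction of the \emph{prediction} step measured from the \emph{updated} filters. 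But to close the recursion you must bound $\|\tilde\phi_t^{(\pi)} - \tilde\phi_t^{(\pi')}\|_1$ by $\|\phi_t^{(\pi)} - \phi_t^{(\pi')}\|_1$, and the Bayes update $\mu \mapsto \mu g/\mu(g)$ is not a TV contraction: even using the Doeblin bound $\phi_t \geq \sigma_-$ on the denominators, the update can expand the $\ell^1$ distance by a factor as large as $1/(4\sigma_-(1-\sigma_-))$ (take $K^\theta = 2$, $g = (1,G)$ with $G = p/(1-p)$). Multiplying this by the Dobrushin factor of the prediction step gives a product that exceeds $1$ for small $\sigma_-$, so no data-uniform one-step contraction with factor $\rho = 1 - \sigma_-/(1-\sigma_-)$ can be extracted this way; the "lower bound on the denominators" does not repair this.

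The paper does not prove the lemma itself: it invokes Lemma 1 and Corollary 1 of \cite{douc2004asymptotic}, whose argument is global rather than step-by-step. One writes the law of $X_n$ given $y_1^{n-1}$ and $X_0 \sim \pi$ as $\pi$ pushed through a product of data-dependent Markov kernels
\begin{equation*}
F_t(x_{t-1},x_t) \;\propto\; Q^\theta(x_{t-1},x_t)\, \gamma^\theta_{x_t}(y_t - T^\theta_{x_t}(t))\, \beta_{t|n-1}(x_t),
\end{equation*}
where $\beta_{t|n-1}$ is the backward function summing out the \emph{future} observations $y_{t+1}^{n-1}$. Since $\sigma_- \leq Q^\theta(x,x') \leq 1-\sigma_-$ under \textbf{(Aerg)}, each $F_t$ admits the Doeblin minorization $F_t(x_{t-1},\cdot) \geq \frac{\sigma_-}{1-\sigma_-}\,\nu_t(\cdot)$ for a probability measure $\nu_t$ not depending on $x_{t-1}$, hence has Dobrushin coefficient at most $\rho$, and the product telescopes to $\rho^{n-1}\|\pi - \pi'\|_1 \leq (2/\rho)\rho^n$. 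If you want to keep a recursive flavour instead, the standard alternative is to contract in the Hilbert projective metric (where the Bayes update is an isometry) and convert to TV at the end. Either way, the missing ingredient in your proposal is the incorporation of the future observations (or a metric in which the update does not expand); without it the claimed one-step inequality is false.
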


Besides, under \textbf{(Aerg)}, for all $\theta \in \Theta$, $x \in \Xcaltheta$, $y_1^{n-1} \in \Rbb^n$ and for all probability measure $\pi$ on $\Xcaltheta$:
\begin{equation*}
p^\theta_{X_n | Y_1^{n-1}}(x | y_1^{n-1}, X_0 \sim \pi) \geq \sigma_-.
\end{equation*}

Hence, using the inequality $| \log x - \log y | \leq \frac{|x - y|}{x \wedge y}$ for all $x,y > 0$: for all $n \in \Nbb^*$, $\theta \in \Theta$, $y_1^n \in \Rbb^n$, $b \in \Bcal^*$ and for all probability measures $\pi, \pi'$ on $\Xcaltheta$:
\begin{align*}
&| \log p^\theta_{Y_n, B_n | Y_1^{n-1}}(y_n, b | y_1^{n-1}, X_0 \sim \pi)
	- \log p^\theta_{Y_n, B_n | Y_1^{n-1}}(y_n, b | y_1^{n-1}, X_0 \sim \pi') | \\
	&\leq \frac{
		\displaystyle \sum_{x \in \Xcaltheta} | p^\theta_{X_n | Y_1^{n-1}}(x | y_1^{n-1}, X_0 \sim \pi) - p^\theta_{X_n | Y_1^{n-1}}(x | y_1^{n-1}, X_0 \sim \pi') | p^\theta_{Y_n, B_n | X_n}(y_n, b | x)
	}{
		\displaystyle \sigma_- \sum_{x \in \Xcaltheta} p^\theta_{Y_n, B_n | X_n}(y_n, b | x)} \\
	&\leq \frac{C}{\sigma_-} \rho^n.
\end{align*}

Changing the constant $C$ if necessary, for all $a \in \Nbb^*$:
\begin{multline*}
\sup_{\theta \in \Theta^\text{OK}_n(M-\|\Delta\|_{\infty})}
\Bigg| \frac{1}{n} \sum_{t=1}^n \log p^\theta_{Y_t, B_t | Y_1^{t-1}}(Y_t, B_t | Y_1^{t-1}) \\
	- \frac{1}{n} \sum_{t=1}^n \log p^\theta_{Y_t, B_t | Y_1^{t-1}, B_1^{t-a}}(Y_t, B_t | Y_1^{t-1}, B_1^{t-a}) \Bigg|
	\leq C \rho^a.
\end{multline*}

It remains to condition on $B_{t-a+1}^{t-1}$.

\begin{lemma}
\label{lemma_conditionnementBlocsPasses}
Assume \textbf{(Amax)} and \textbf{(Amin)}. Then for all $a \in \Nbb^*$,
\begin{align*}
\sup_{\theta \in \Theta^\text{OK}_n(M-\|\Delta\|_{\infty})}
\Bigg| \frac{1}{n} \sum_{t=1}^n & \log p^\theta_{Y_t, B_t | Y_1^{t-1}, B_1^{t-a}} (Y_t, B_t | Y_1^{t-1}, B_1^{t-a}) \\
	& - \frac{1}{n} \sum_{t=1}^n \log p^\theta_{Y_t, B_t | Y_1^{t-1}, B_1^{t-1}}(Y_t, B_t | Y_1^{t-1}, B_1^{t-1}) \Bigg| \\
	&\qquad \leq \frac{2 a K^2}{\sigma_-^3} \frac{1}{n} \sum_{i=1}^{n-1}
		\left( 1 \wedge \frac{g(\{E(i) - M - Z^{\max}_i - \|\Delta\|_\infty\}_+)}{m(M + Z^{\max}_i + \|\Delta\|_\infty)} \right) \\
	&\qquad =: \frac{2 a K^2}{\sigma_-^3} \frac{1}{n} \sum_{i=1}^{n-1} h'(E(i), Z^{\max}_i).
\end{align*}
\end{lemma}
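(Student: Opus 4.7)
The plan is a telescoping argument on the length of the block-conditioning. For each $t \leq n$, write
\begin{equation*}
\log p^\theta_{Y_t, B_t | Y_1^{t-1}, B_1^{t-1}} - \log p^\theta_{Y_t, B_t | Y_1^{t-1}, B_1^{t-a}}
	= \sum_{j=t-a+1}^{t-1} \Delta_j(t,\theta),
\end{equation*}
where $\Delta_j(t,\theta) := \log p^\theta_{Y_t, B_t | Y_1^{t-1}, B_1^{j}} - \log p^\theta_{Y_t, B_t | Y_1^{t-1}, B_1^{j-1}}$. Each increment $\Delta_j(t,\theta)$ measures what is learned about $(Y_t, B_t)$ by adding $B_j$ to a conditioning that already contains $Y_1^{t-1}$ --- and hence $Y_j$ itself. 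Since tubes of distinct blocks are separated by $E(j)$ at time $j$, the value of $B_j$ is almost determined by $Y_j$, so each $\Delta_j(t,\theta)$ should be small.

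To quantify this, expand
\begin{equation*}
p^\theta_{Y_t, B_t | Y_1^{t-1}, B_1^{j-1}}(Y_t,B_t | Y_1^{t-1},B_1^{j-1}) = \sum_{b \in \Bcal^*} p^\theta(B_j = b \mid Y_1^{t-1}, B_1^{j-1}) \, p^\theta_{Y_t, B_t \mid Y_1^{t-1}, B_1^{j-1}, B_j = b},
\end{equation*}
and compare with $p^\theta_{Y_t, B_t | Y_1^{t-1}, B_1^{j}}$, which is the same convex combination concentrated at the realized value of $B_j$. Applying the elementary inequality $|\log(x) - \log(y)| \leq |x-y|/(x \wedge y)$ together with the fact that the block-conditional densities $p^\theta_{Y_t, B_t \mid Y_1^{t-1}, B_1^{j-1}, B_j = b}$ are all comparable up to a $\sigma_-^{-2}$ factor --- since $B_j$ influences $(Y_t,B_t)$ only through the intermediate transitions, each of which is at least $\sigma_-$ and at most $1$ --- one obtains a bound
\begin{equation*}
|\Delta_j(t,\theta)| \leq \frac{2}{\sigma_-^2} \, p^\theta\bigl(B_j \neq B_j^{\text{realized}} \,\big|\, Y_1^{t-1}, B_1^{j-1}\bigr).
\end{equation*}

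The posterior $p^\theta(B_j \neq B_j^{\text{realized}} \mid Y_1^{t-1}, B_1^{j-1})$ is then bounded using the forward-backward decomposition: this posterior is a ratio of sums over at most $K$ hidden states $X_j$ (giving a $K^2$ factor after bounding both numerator and denominator), where the dependence on $Y_j$ factors out through the emission density $\gamma^\theta_x(Y_j - T^\theta_x(j))$; the remaining prior and backward factors are bounded above and below by quantities involving $\sigma_-$. For $\theta \in \ThetaOK(M-\|\Delta\|_\infty)$, any state $x$ in the true block satisfies $|Y_j - T^\theta_x(j)| \leq M + Z^{\max}_j + \|\Delta\|_\infty$ by \textbf{(Amin)} and equation~\eqref{eq_majoration_ecart_vraieettheta}, giving a lower bound $m(M+Z^{\max}_j+\|\Delta\|_\infty)$ on its density; while for any state in a wrong block, $|Y_j - T^\theta_x(j)| \geq E(j) - M - Z^{\max}_j - \|\Delta\|_\infty$ by block separation, giving an upper bound $g$ of the positive part. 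Trivially bounding the posterior by $1$ handles small $j$ where the tubes may overlap, producing the expression $h'(E(j), Z^{\max}_j)$. Combining yields $|\Delta_j(t,\theta)| \leq \frac{2K^2}{\sigma_-^3} h'(E(j), Z^{\max}_j)$.

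Summing the telescope gives at most $a$ terms per $t$, and swapping the order of the double sum $\sum_{t=1}^n \sum_{j=t-a+1}^{t-1}$ shows that each index $i = j$ is used in at most $a$ pairs, yielding the claimed bound. The main obstacle will be the careful bookkeeping of the powers of $\sigma_-$ and $K$ in the posterior bound, since the forward-backward factorization of $p^\theta(B_j = b \mid Y_1^{t-1}, B_1^{j-1})$ involves both filtering and smoothing probabilities on a mixed $(Y,B)$ history; everything else is a routine manipulation of conditional densities.
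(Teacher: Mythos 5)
Your proposal is correct and follows essentially the same route as the paper: the paper compares $B_1^{t-a}$ to $B_1^{t-1}$ in one step and then union-bounds the probability of misidentifying the block vector over the indices $i \in \{(t-a+1)\vee 1, \dots, t-1\}$, whereas you telescope one block at a time, but both reduce to the same double sum, the same forward--backward bound $p^\theta_{X_i | X_{i-1},X_{i+1}} \geq \sigma_-^2$ giving the $K^2/\sigma_-^2$ factor, and the same emission-density ratio bound $h'$ via the tube separation. One bookkeeping remark: your intermediate claim that the log-comparison step costs a factor $2/\sigma_-^2$ is inconsistent with your (correct) final per-increment bound $\frac{2K^2}{\sigma_-^3}h'$; as in the paper, the lower bound $p^\theta_{Y_t,B_t|\cdots} \geq \sigma_- \sum_x p^\theta_{Y_t,B_t|X_t=x}$ combined with $|\log x - \log y| \leq |x-y|/(x\wedge y)$ yields $2/\sigma_-$, not $2/\sigma_-^2$, and that is what you need to land on the stated constant.
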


\begin{proof}
We show that when \textbf{(Aerg)} holds, for all $\theta \in \Theta^\text{OK}_n(M-\|\Delta\|_{\infty})$, $t \leq n$, $a \in \Nbb^*$, $y_1^t \in \Ycal^t$ and $b_1^t \in (\Bcal^*)^t$,
\begin{align}
\nonumber
\Big| \log p^\theta_{Y_t, B_t | Y_1^{t-1}, B_1^{t-a}} & (y_t, b_t | y_1^{t-1}, b_1^{t-a})
	- \log p^\theta_{Y_t, B_t | Y_1^{t-1}, B_1^{t-1}}(y_t, b_t | y_1^{t-1}, b_1^{t-1}) \Big| \\
	\label{eq_proof_recentBlocks_erreurBlock}
	&\leq \frac{2}{\sigma_-} p^\theta_{B_{(t-a+1) \vee 1}^{t-1} | Y_1^{t-1}, B_1^{t-a}}\left((\Bcal^*)^{(a \wedge t)-1} \setminus \{ b_{(t-a+1) \vee 1}^{t-1} \} | y_1^{t-1}, b_1^{t-a}\right) \\
	\label{eq_proof_recentBlocks_densites}
	&\leq \frac{2 K^2}{\sigma_-^3} \sum_{i=(t-a+1) \vee 1}^{t-1}
		\frac{\displaystyle \sup_{x_i \in \Xcaltheta \text{ s.t. } \bbf^\theta(x_i) \neq b_i}\gamma^\theta_{x_i}(y_i - T^\theta_{x_i}(i))}{\displaystyle \sum_{x \in \Xcaltheta} \gamma^\theta_{x}(y_i - T^\theta_x(i))}.
\end{align}

Then, we show that under \textbf{(Amax)} and \textbf{(Amin)}, for all $\theta \in \Theta^\text{OK}_n(M-\|\Delta\|_{\infty})$ and $i \leq n$:
\begin{align}
\label{eq_proof_recentBlocks_utilisationEncadrements}
\frac{\displaystyle \sup_{x_i \in \Xcaltheta \text{ s.t. } \bbf^\theta(x_i) \neq B_i}\gamma^\theta_{x_i}(Y_i - T^\theta_{x_i}(i))}{\displaystyle \sum_{x \in \Xcaltheta} \gamma^\theta_{x}(Y_i - T^\theta_x(i))}
	&\leq 1 \wedge \frac{g(\{E(i) - M - Z^{\max}_i - \|\Delta\|_\infty\}_+)}{m(M + Z^{\max}_i + \|\Delta\|_\infty)} \\
\nonumber	&=: h'(E(i), Z^{\max}_i),
\end{align}
and the lemma follows by summing over $t$ and $i$.
The details of the proof can be found in Section~\ref{sec_proof_conditionnementBlocsPasses}.
\end{proof}

Therefore, almost surely,
\begin{align*}
\limsup_{n \rightarrow +\infty} \sup_{\theta \in \Theta^\text{OK}_n(M-\|\Delta\|_{\infty})}
\Bigg| \frac{1}{n} \sum_{t=1}^n &\, \log p^\theta_{Y_t, B_t | Y_1^{t-1}}(Y_t, B_t | Y_1^{t-1}) \\
	&- \frac{1}{n} \sum_{t=1}^n \log p^\theta_{Y_t, B_t | Y_1^{t-1}, B_1^{t-1}} (Y_t, B_t | Y_1^{t-1}, B_1^{t-1}) \Bigg| \\
	&\qquad\qquad \leq \limsup_{n \rightarrow +\infty} \left(C \rho^a
		+ \frac{2 a K^2}{\sigma_-^3} \frac{1}{n} \sum_{i=1}^{n-1} h'(E(i), Z^{\max}_i) \right) \\
	&\qquad\qquad \leq C \rho^a + \frac{2 a K^2}{\sigma_-^3} \Ebb^*[h'(E, Z^{\max}_1)] \\
	&\qquad\qquad \leq C' \left( - \Ebb^*[h'(E, Z^{\max}_1)] \log \Ebb^*[h'(E, Z^{\max}_1)] \right)
\end{align*}
for some explicit constant $C'$ and for all $E$ sufficiently large to have $\Ebb^*[h'(E, Z^{\max}_1)] < 1/2$ using Assumption \textbf{(Adiv)}, the law of large numbers, the fact that the mapping $e \longmapsto h'(e,z)$ is non-negative, bounded by $0$ and $1$ and non-increasing for all $z$, and by taking $a = \lceil \frac{\log \Ebb^*[h'(E, Z^{\max}_1)]}{\log \rho} \rceil$. Since the function $e \longmapsto h'(e,z)$ tends to $0$ as $e$ tends to $+\infty$ for all $z$, the dominated convergence theorem ensures that almost surely,
\begin{multline*}
\sup_{\theta \in \Theta^\text{OK}_n(M-\|\Delta\|_{\infty})}
\Bigg| \frac{1}{n} \sum_{t=1}^n \log p^\theta_{Y_t, B_t | Y_1^{t-1}}(Y_t, B_t | Y_1^{t-1}) \\
	- \frac{1}{n} \sum_{t=1}^n \log p^\theta_{Y_t, B_t | Y_1^{t-1}, B_1^{t-1}}(Y_t, B_t | Y_1^{t-1}, B_1^{t-1}) \Bigg|
	\underset{n \rightarrow \infty}{\longrightarrow} 0.
\end{multline*}

Let $\frac{1}{n} \ell_n^{(Y,B)}(\theta) = \frac{1}{n} \log p^\theta_{(Y, B)_1^n}((Y, B)_1^n)$. Combining the above equation with equation~\eqref{eq_bilan_ajoutblocpresent} yields Theorem \ref{th_approx_blocs}.

\subsection{Application: existence and finiteness of the relative entropy rate}

Theorem \ref{th_approx_blocs} implies that Corollary \ref{cor_Arate} holds: since $\theta^* \in \Theta^\text{OK}_n(M)$ for all $n\in\Nbb^*$ and $M>0$,
\begin{align*}
	\left| \frac{1}{n} \ell_n(\theta^*)
		- \frac{1}{n} \ell_n^{(Y,B)}(\theta^*) \right| \underset{n \rightarrow +\infty}{\longrightarrow} 0.
\end{align*}

Let $Z'_t = Y_t - \Tbb^*_{B_t}(t)$. Then $\frac{1}{n} \ell_n^{(Y,B)}(\theta^*) = \frac{1}{n} \ell_n^{(Z',B)}(\theta^*)$. Moreover, under $\theta^*$, the process $(X_t, (Z'_t, B_t))_{t \geq 1}$ is a homogeneous and ergodic HMM with emission densities $(z',b) \longmapsto \gamma^*_\xast(z' - \Delta(\xast)) \one(b = \bbf^*(\xast))$ for $\xast \in [\Kast]$ with respect to the measure $\text{Leb} \otimes \mu_{\Bcal^*}$, where $\mu_{\Bcal^*}$ is the counting measure on $\Bcal^*$.

Since it is homogeneous and ergodic, \cite{barron1985entropyrate} shows that there exists $\ell(\theta^*) > -\infty$ such that
\begin{equation*}
\frac{1}{n} \ell_n^{(Z',B)}(\theta^*) \longrightarrow \ell(\theta^*).
\end{equation*}

Then, all emission densities are upper bounded by $g(0)$ under \textbf{(Amax)}, so that the positive part of their logarithm is integrable. Therefore, \cite{leroux92MLEHMM} implies that $\ell(\theta^*) < +\infty$ and Corollary \ref{cor_Arate} is proved.

\subsection{Proofs}

\subsubsection{Proof of Lemma~\ref{lemma_approx_logV_blocPresent} (current block)}
\label{sec_proof_approx_logV_blocPresent}

First note that this quantity is non-negative: the denominator contains less terms, and all of them are non-negative. Hence it is enough to find an upper bound. To this aim we will use Assumptions \textbf{(Amax)}, \textbf{(Amin)} and \textbf{(Aerg)}:
\begin{align*}
&\left| \log p^\theta_{Y_t | Y_1^{t-1}}(Y_t | Y_1^{t-1})
	- \log p^\theta_{Y_t, B_t | Y_1^{t-1}}(Y_t, B_t | Y_1^{t-1}) \right| \\
	&\leq \log \left(\frac{ g(0) }{\displaystyle \sigma_- \sup_{x_t \in \Xcaltheta \text{ s.t. } \bbf^\theta(x_t) = B_t} m( | Y_t - T^\theta_{x_t}(t) | ) } \right) \\
	&\leq \log \frac{g(0)}{\sigma_-} + \sum_{x^*_t \in [\Kast]} \one_{X_t = x^*_t} \left(-\log m\left( \inf_{x_t \in \Xcaltheta \text{ s.t. } \bbf^\theta(x_t) = \bbf^*(x^*_t)} | Z_t + T^*_{x^*_t}(t) - T^\theta_{x_t}(t) | \right)\right).
\end{align*}

\noindent
When $X_t = x^*_t$,
\begin{align*}
\inf_{x_t \in \Xcaltheta \text{ s.t. } \bbf^\theta(x_t) = \bbf^*(x^*_t)} & | Z_t + T^*_{x^*_t}(t) - T^\theta_{x_t}(t) | \\
	&\leq | Z_t | + \inf_{x_t \in \Xcaltheta \text{ s.t. } \bbf^\theta(x_t) = \bbf^*(x^*_t)} |\Tbb^*_{\bbf^*(x^*_t)}(t) - T^\theta_{x_t}(t) | + \Delta(x^*_t) \\
	&\leq Z^{\max}_t + M + \| \Delta \|_\infty
\end{align*}
using equation~\eqref{eq_majoration_ecart_vraieettheta}, hence
\begin{align*}
-\log m\left( \inf_{x_t \in \Xcaltheta \text{ s.t. } \bbf^\theta(x_t) = \bbf^*(x^*_t)} | Z_t + T^*_{x^*_t}(t) - T^\theta_{x_t}(t) | \right)
	\leq -\log m( M + Z^{\max}_t + \| \Delta \|_\infty).
\end{align*}

\noindent
This yields
\begin{align*}
\left| \log p^\theta_{Y_t | Y_1^{t-1}}(Y_t | Y_1^{t-1})
	- \log p^\theta_{Y_t, B_t | Y_1^{t-1}}(Y_t, B_t | Y_1^{t-1}) \right|
	\leq \log \frac{g(0)}{\sigma_- m( M + Z^{\max}_t + \| \Delta \|_\infty)}.
\end{align*}

Let us show the second bound. We can rewrite it as
\begin{align*}
&\log p^\theta_{Y_t | Y_1^{t-1}}(Y_t | Y_1^{t-1})
	- \log p^\theta_{Y_t, B_t | Y_1^{t-1}}(Y_t, B_t | Y_1^{t-1}) \\
	&= - \log \left( 1 - \frac{\displaystyle
			\sum_{x_t \in \Xcaltheta} p^\theta(X_t = x_t | Y_1^{t-1}) \gamma_{x_t}(Y_t - T^\theta_{x_t}(t)) \one_{\bbf^\theta(x_t) \neq B_t}
		}{\displaystyle
			\sum_{x_t \in \Xcaltheta} p^\theta(X_t = x_t | Y_1^{t-1}) \gamma_{x_t}(Y_t - T^\theta_{x_t}(t))
		} \right) \\
	&= - \log \left( 1 - \sum_{x^*_t \in [\Kast]} \one_{X_t = x^*_t} \underbrace{\frac{\displaystyle
			\sum_{x_t \in \Xcaltheta} p^\theta(X_t = x_t | Y_1^{t-1}) \gamma_{x_t}(Z_t + T^*_{x^*_t}(t) - T^\theta_{x_t}(t)) \one_{\bbf^\theta(x_t) \neq B_t}
		}{\displaystyle
			\sum_{x_t \in \Xcaltheta} p^\theta(X_t = x_t | Y_1^{t-1}) \gamma_{x_t}(Z_t + T^*_{x^*_t}(t) - T^\theta_{x_t}(t))
		}}_{\textbf{(*)}} \right).
\end{align*}

Using \textbf{(Amax)}, \textbf{(Amin)} and \textbf{(Aerg)},
\begin{align*}
\textbf{(*)} &\leq \frac{\displaystyle
			\sup_{x_t \in \Xcaltheta \text{ s.t. } \bbf^\theta(x_t) \neq B_t} g(|Z_t + T^*_{x^*_t}(t) - T^\theta_{x_t}(t)|)
		}{\displaystyle \sigma_-
			\sum_{x_t \in \Xcaltheta} m(|Z_t + T^*_{x^*_t}(t) - T^\theta_{x_t}(t)|)
		} \\
	&\leq \frac{\displaystyle \sup_{x_t \in \Xcaltheta \text{ s.t. } \bbf^\theta(x_t) \neq B_t} g(\{|T^*_{x^*_t}(t) - T^\theta_{x_t}(t)| - Z^{\max}_t\}_+)
		}{\displaystyle \sigma_- \sup_{x_t \in \Xcaltheta} m(|T^*_{x^*_t}(t) - T^\theta_{x_t}(t)| + Z^{\max}_t) }.
\end{align*}

Let $x \in \Xcaltheta$ such that $\bbf^\theta(x) \neq B_t$. When $X_t = x^*_t$,
\begin{align*}
|Y_t - T^\theta_x(t)|
	&= |Z_t + T^*_{x^*_t}(t) - T^\theta_x(t)| \\
	&\geq |\Tbb^*_{B_t}(t) - T^\theta_x(t)| - Z^{\max}_t - \Delta(x^*_t) \\
	&\geq |\Tbb^*_{B_t}(t) - \Tbb^*_{\bbf^{\theta}(x)}(t)| - M - Z^{\max}_t - \|\Delta\|_\infty \\
	&\geq E(t) - M - Z^{\max}_t - \|\Delta\|_\infty
\end{align*}
and
\begin{align*}
\sup_{x_t \in \Xcaltheta} m(|T^*_{x^*_t}(t) - T^\theta_{x_t}(t)| + Z^{\max}_t)
	&\leq m(\inf_{x_t \in \Xcaltheta} |T^*_{x^*_t}(t) - T^\theta_{x_t}(t)| + Z^{\max}_t) \\
	&\leq m(M + Z^{\max}_t + \|\Delta\|_\infty)
\end{align*}
using equation \eqref{eq_majoration_ecart_vraieettheta}. Therefore,
\begin{align}
\label{eq_controle_ca}
\textbf{(*)}
	&\leq \frac{g(\{E(t) - M - Z^{\max}_t - \|\Delta\|_\infty\}_+)}{\sigma_- m(M + Z^{\max}_t + \|\Delta\|_\infty)}.
\end{align}

\subsubsection{Proof of Lemma~\ref{lemma_conditionnementBlocsPasses} (recent blocks)}
\label{sec_proof_conditionnementBlocsPasses}

\paragraph*{Proof of equation~\eqref{eq_proof_recentBlocks_erreurBlock}}

Without loss of generality, one may assume $a \leq t$ (otherwise, the proof holds by replacing $a$ by $a \wedge t$).

\begin{align*}
p^\theta_{Y_t, B_t | Y_1^{t-1}, B_1^{t-a}} & (y_t, b_t | y_1^{t-1}, b_1^{t-a}) \\
	&= p^\theta_{Y_t, B_t | Y_1^{t-1}, B_1^{t-1}}(y_t, b_t | y_1^{t-1}, b_1^{t-1}) \\
			&\qquad \times p^\theta_{B_{t-a+1}^{t-1} | Y_1^{t-1}, B_1^{t-a}}(b_{t-a+1}^{t-1} | y_1^{t-1}, b_1^{t-a}) \\
		&\quad + p^\theta_{Y_t, B_t | Y_1^{t-1}, B_1^{t-a}, B_{t-a+1}^{t-1}}(y_t, b_t | y_1^{t-1}, b_1^{t-a}, (\Bcal^*)^{a-1} \setminus \{b_{t-a+1}^{t-1}\} ) \\
			&\qquad \times p^\theta_{B_{t-a+1}^{t-1} | Y_1^{t-1}, B_1^{t-a}}((\Bcal^*)^{a-1} \setminus \{b_{t-a+1}^{t-1}\} | y_1^{t-1}, b_1^{t-a}),
\end{align*}
hence
\begin{align*}
&| p^\theta_{Y_t, B_t | Y_1^{t-1}, B_1^{t-a}}(y_t, b_t | y_1^{t-1}, b_1^{t-a})
	- p^\theta_{Y_t, B_t | Y_1^{t-1}, B_1^{t-1}}(y_t, b_t | y_1^{t-1}, b_1^{t-1}) | \\
	&\qquad \leq p^\theta_{B_{t-a+1}^{t-1} | Y_1^{t-1}, B_1^{t-a}}((\Bcal^*)^{a-1} \setminus \{b_{t-a+1}^{t-1}\} | y_1^{t-1}, b_1^{t-a}) \Big[ p^\theta_{Y_t, B_t | Y_1^{t-1}, B_1^{t-1}}(y_t, b_t | y_1^{t-1}, b_1^{t-1}) \\
		&\hspace{3cm} + p^\theta_{Y_t, B_t | Y_1^{t-1}, B_1^{t-1}}(y_t, b_t | y_1^{t-1}, b_1^{t-a}, (\Bcal^*)^{a-1} \setminus \{b_{t-a+1}^{t-1}\}) \Big] \\
	&\qquad \leq 2 p^\theta_{B_{t-a+1}^{t-1} | Y_1^{t-1}, B_1^{t-a}}((\Bcal^*)^{a-1} \setminus \{b_{t-a+1}^{t-1}\} | y_1^{t-1}, b_1^{t-a})
		\sum_{x \in \Xcaltheta} p^\theta_{Y_t, B_t | X_t}(y_t, b_t | x).
\end{align*}

Finally, since under \textbf{(Aerg)}
\begin{align*}
p^\theta_{Y_t, B_t | Y_1^{t-1}, B_1^{t-a}}(y_t, b_t | y_1^{t-1}, b_1^{t-a})
	&= \sum_{x \in \Xcal} p^\theta_{Y_t, B_t | X_t}(y_t, b_t | x) p^\theta_{X_t | Y_1^{t-1}, B_1^{t-a}}(x | y_1^{t-1}, b_1^{t-a}) \\
	&\geq \sigma_- \sum_{x \in \Xcal} p^\theta_{Y_t, B_t | X_t}(y_t, b_t | x)
\end{align*}
and the same inequality holds for $p^\theta_{Y_t, B_t | Y_1^{t-1}, B_1^{t-1}}(y_t, b_t | y_1^{t-1}, b_1^{t-1})$, we obtain equation~\eqref{eq_proof_recentBlocks_erreurBlock} using $| \log x - \log y| \leq \frac{|x-y|}{x \wedge y}$ for all $x,y > 0$.

\paragraph*{Proof of equation~\eqref{eq_proof_recentBlocks_densites}}

Since
\begin{equation*}
(\Bcal^*)^{a-1} \setminus \{ b_{t-a+1}^{t-1} \}
	= \bigcup_{i=t-a+1}^{t-1} \left( (\Bcal^*)^{i-(t-a+1)} \times \left(\Bcal^* \setminus \{b_i\}\right) \times (\Bcal^*)^{t-1-i} \right),
\end{equation*}
by union bound,
\begin{align*}
&p^\theta_{B_{t-a+1}^{t-1} | Y_1^{t-1}, B_1^{t-a}}((\Bcal^*)^{a-1} \setminus \{ b_{t-a+1}^{t-1} \} | y_1^{t-1}, b_1^{t-a}) \\
	&\leq \sum_{i=t-a+1}^{t-1} p^\theta_{B_i | Y_1^{t-1}, B_1^{t-a}}(\Bcal^* \setminus \{b_i\} | y_1^{t-1}, b_1^{t-a}) \\
	&= \sum_{i=t-a+1}^{t-1} \sum_{x_i \in \Xcaltheta}
		p^\theta_{B_i | X_i}(\Bcal^* \setminus \{b_i\} | x_i)
		p^\theta_{X_i | Y_1^{t-1}, B_1^{t-a}}(x_i | y_1^{t-1}, b_1^{t-a}) \\
	&= \sum_{i=t-a+1}^{t-1} \sum_{x_i \in \Xcaltheta} \one_{\bbf^\theta(x_i) \neq b_i} \sum_{x_{i-1}, x_{i+1} \in \Xcaltheta}
		p^\theta_{X_i | Y_i, X_{i-1}, X_{i+1}}(x_i | y_i, x_{i-1}, x_{i+1}) \\
		&\hspace{3cm} \times p^\theta_{X_{i-1}, X_{i+1} | Y_1^{t-1}, B_1^{t-a}}(x_{i-1}, x_{i+1} | y_1^{t-1}, b_1^{t-a}).
\end{align*}

Then, use that for all $x_{i-1}, x_{i+1} \in \Xcaltheta$,
\begin{equation*}
p^\theta_{X_{i-1}, X_{i+1} | Y_1^{t-1}, B_1^{t-a}}(x_{i-1}, x_{i+1} | y_1^{t-1}, b_1^{t-a}) \leq 1
\end{equation*}
and that by the Markov property and \textbf{(Aerg)} for all $x_{i-1}, x, x_{i+1} \in \Xcaltheta$
\begin{align*}
p^\theta_{X_i | X_{i-1}, X_{i+1}}(x | x_{i-1}, x_{i+1})
	&= \frac{p^\theta_{X_{i+1} | X_i}(x_{i+1} | x) p^\theta_{X_i | X_{i-1}}(x | x_{i-1})}{p^\theta_{X_{i+1} | X_{i-1}}(x_{i+1} | x_{i-1})}\\
		&\geq Q^\theta(x,x_{i+1}) Q^\theta(x_{i-1},x) \\
&\geq \sigma_-^2,
\end{align*}
so that
\begin{align*}
p^\theta_{X_i | Y_i, X_{i-1}, X_{i+1}} & (x_i | y_i, x_{i-1}, x_{i+1}) \\
	&= \frac{p^\theta_{X_i | X_{i-1}, X_{i+1}}(x_i | x_{i-1}, x_{i+1}) \gamma^\theta_{x_i}(y_i - T^\theta_{x_i}(i))}{\displaystyle \sum_{x \in \Xcaltheta} p^\theta_{X_i | X_{i-1}, X_{i+1}}(x | x_{i-1}, x_{i+1}) \gamma^\theta_{x}(y_i - T^\theta_x(i))} \\
	&\leq \frac{p^\theta_{X_i | X_{i-1}, X_{i+1}}(x_i | x_{i-1}, x_{i+1}) \gamma^\theta_{x_i}(y_i - T^\theta_{x_i}(i))}{\displaystyle \sigma_-^2 \sum_{x \in \Xcaltheta} \gamma^\theta_{x}(y_i - T^\theta_x(i))}.
\end{align*}

Thus,
\begin{align*}
&p^\theta_{B_{t-a+1}^{t-1} | Y_1^{t-1}, B_1^{t-a}}((\Bcal^*)^{a-1} \setminus \{b_{t-a+1}^{t-1} \} | y_1^{t-1}, b_1^{t-a}) \\
	&\leq\sum_{i=t-a+1}^{t-1} \sum_{x_{i-1}, x_{i+1} \in \Xcaltheta}
		\frac{\displaystyle \sum_{x_i \in \Xcaltheta} \one_{\bbf^\theta(x_i) \neq b_i} p^\theta_{X_i | X_{i-1}, X_{i+1}}(x_i | x_{i-1}, x_{i+1}) \gamma^\theta_{x_i}(y_i - T^\theta_{x_i}(i))}{\displaystyle \sigma_-^2 \sum_{x \in \Xcaltheta} \gamma^\theta_{x}(y_i - T^\theta_x(i))} \\
	&\leq \frac{K^2}{\sigma_-^2} \sum_{i=t-a+1}^{t-1}
		\frac{\displaystyle \sup_{x_i \in \Xcaltheta \text{ s.t. } \bbf^\theta(x_i) \neq b_i}\gamma^\theta_{x_i}(y_i - T^\theta_{x_i}(i))}{\displaystyle \sum_{x \in \Xcaltheta} \gamma^\theta_{x}(y_i - T^\theta_x(i))}.
\end{align*}

\paragraph*{Proof of equation~\eqref{eq_proof_recentBlocks_utilisationEncadrements}}

Using \textbf{(Amax)} and \textbf{(Amin)},
\begin{align*}
&\frac{\displaystyle \sup_{x_i \in \Xcaltheta \text{ s.t. } \bbf^\theta(x_i) \neq B_i}\gamma^\theta_{x_i}(Y_i - T^\theta_{x_i}(i))}{\displaystyle \sum_{x \in \Xcaltheta} \gamma^\theta_{x}(Y_i - T^\theta_x(i))}
	\leq \left( 1 \wedge \frac{\displaystyle \sup_{x_i \in \Xcaltheta \text{ s.t. } \bbf^\theta(x_i) \neq B_i} g(|Y_i - T^\theta_{x_i}(i)|)}{\displaystyle \sup_{x \in \Xcaltheta} m(|Y_i - T^\theta_x(i)|)} \right) \\
	&\leq \sum_{x^*_i \in [\Kast]} \one_{X_t = x^*_i} \left( 1 \wedge \frac{\displaystyle \sup_{x_i \in \Xcaltheta \text{ s.t. } \bbf^\theta(x_i) \neq \bbf^*(x^*_i)} g(|Z_i + T^*_{x^*_i}(i) - T^\theta_{x_i}(i)|)}{\displaystyle \sup_{x \in \Xcaltheta} m(|Z_i + T^*_{x^*_i}(i) - T^\theta_x(i)|)} \right) \\
	&\leq 1 \wedge \frac{g(\{E(t) - M - Z^{\max}_i - \|\Delta\|_\infty\}_+)}{m(M + Z^{\max}_i + \|\Delta\|_\infty)}
\end{align*}
by the same arguments as in the control of \textbf{(*)} in equation~\eqref{eq_controle_ca}.

\section{Localization of the MLE}
\label{sec_localisation_MLE}

In this section we shall prove Theorem \ref{th_localization}. Assume \textbf{(Aerg)}, \textbf{(Amin)}, \textbf{(Amax)} and \textbf{(Aint)}.

\subsection{Preliminary: compactness results}
\label{sec_compacity}

Recall that for all $M > 0$ and $n \in \Nbb$, $\ThetaOK(M)$ is the subset of $\Theta$ defined by
\begin{align}
\label{eq_vraies_tendances_pas_toutes_seules_rappel}
\forall \theta \in \ThetaOK(M), \qquad
& \forall \xast \in [\Kast],\quad \exists x \in \Xcaltheta, \quad \|T^*_\xast - T_x^{\theta}\|_{\infty,[0,n]}\leq M \\
\label{eq_tendances_parametres_pas_toutes_seules_rappel}
\text{ and } \  & \forall x \in \Xcaltheta, \quad \exists \xast \in [\Kast],\quad \|T^*_\xast - T_x^{\theta}\|_{\infty,[0,n]}\leq M.
\end{align}

To prove that the maximum likelihood estimator belongs to such a set, we consider relaxed versions of~\eqref{eq_vraies_tendances_pas_toutes_seules_rappel} and~\eqref{eq_tendances_parametres_pas_toutes_seules_rappel}. Let
\begin{eqnarray*}
&\displaystyle I_{n,D}(x,\xast,\theta) := \left\{
		t \in \{1, \dots, n\} : |T^*_\xast(t) - T^\theta_x(t)| \leq D
	\right\}, \\
&\displaystyle \Tcal(\alpha,n,D) := \left\{ \theta \in \Theta : \left|
		\bigcap_{\xast \in [\Kast]} \bigcup_{x \in \Xcaltheta} I_{n,D}(x,\xast,\theta)
	\right| \geq n\alpha \right\},
\\
&\displaystyle \Ucal(\alpha,n,D) := \left\{ \theta \in \Theta : \forall x \in \Xcaltheta, \left|
		\bigcup_{\xast \in [\Kast]} I_{n,D}(x,\xast,\theta)
	\right| \geq n\alpha \right\}.
\end{eqnarray*}

$\Tcal$ corresponds to a relaxation of~\eqref{eq_vraies_tendances_pas_toutes_seules_rappel} and contains the parameters $\theta$ such that all true trends are close to at least one parameter trend during most of the first time steps. Likewise, $\Ucal$ corresponds to a relaxation of~\eqref{eq_tendances_parametres_pas_toutes_seules_rappel} and contains the parameters $\theta$ whose trends are close to at least one true trend during most of the first time steps.

By construction of these sets, for each $\theta \in \Tcal(\alpha,n,D)$ (resp. $\theta \in \Ucal(\alpha,n,D)$) and for each $\xast \in [\Kast]$ (resp. $x \in \Xcaltheta$), there exists at least one $x \in \Xcaltheta$ (resp. $\xast \in [\Kast]$) such that $I_{n,D}(x,\xast,\theta) \geq n\alpha/K$.

\begin{prop}
\label{lemma_theta_dans_I_implique_tendances_proches}
For all $\alpha \in (0,1]$ and $D > 0$, there exists $M(\alpha,D) > 0$ and $n_0 := 4K(d+1) / \alpha$ such that
\begin{multline*}
\forall n \geq n_0, \  \forall \theta \in \Theta, \  \forall x \in \Xcaltheta, \  \forall \xast \in [\Kast], \\
	|I_{n,D}(x,\xast,\theta)| \geq n \alpha / K
		\  \Rightarrow \  \| T^*_\xast - T^\theta_{x} \|_{\infty, [0,n]} \leq M(\alpha,D).
\end{multline*}
\end{prop}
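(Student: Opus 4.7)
The polynomial $P := T^*_\xast - T^\theta_x$ lies in $\Rbb_d[X]$, so it is determined by its values at any $d+1$ points. The plan is to extract from $I_{n,D}(x,\xast,\theta)$ a family of $d+1$ integer nodes spread out over $[0,n]$, then recover $\|P\|_{\infty,[0,n]}$ via Lagrange interpolation with bounds depending only on $\alpha$, $K$, $d$, $D$. First dispose of the trivial case $d=0$: there $P$ is constant and the hypothesis $|I_{n,D}|\geq n\alpha/K \geq 4 > 0$ already forces $\|P\|_{\infty,[0,n]}\leq D$.

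For $d\geq 1$, write $m := |I_{n,D}(x,\xast,\theta)|$, so $m \geq n\alpha/K \geq 4(d+1)$ by the choice $n_0 = 4K(d+1)/\alpha$. Sort $I_{n,D} = \{s_1 < s_2 < \cdots < s_m\}$ and select the order statistics $t_i := s_{a_i}$ where $a_i := \lfloor i(m-1)/d \rfloor + 1$ for $i = 0,\ldots,d$; note $a_0=1$ and $a_d=m$. Since the $s_j$ are distinct integers, so that $s_j - s_i \geq j - i$ for $j > i$,
\begin{equation*}
t_{i+1} - t_i \geq a_{i+1} - a_i \geq \lfloor (m-1)/d \rfloor \geq (m-1-d)/d \geq m/(2d) \geq n\alpha/(2Kd),
\end{equation*}
where the penultimate inequality uses $m \geq 2(d+1)$. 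So the $t_i$ are pairwise separated by at least a positive fraction of $n$.

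Next, since $\deg P \leq d$, Lagrange interpolation with the nodes $t_0 < \cdots < t_d$ gives, for any $t \in [0,n]$,
\begin{equation*}
P(t) = \sum_{i=0}^d P(t_i) \prod_{j \neq i} \frac{t-t_j}{t_i - t_j}.
\end{equation*}
Each $|P(t_i)| \leq D$; for $t \in [0,n]$, $|t-t_j| \leq n$; and $|t_i - t_j| \geq |i-j|\,n\alpha/(2Kd)$ by the spacing above. Therefore
\begin{equation*}
\left| \prod_{j \neq i} \frac{t-t_j}{t_i-t_j} \right| \leq \frac{n^d}{(n\alpha/(2Kd))^d \cdot i!(d-i)!} = \frac{(2Kd/\alpha)^d}{i!(d-i)!},
\end{equation*}
a constant independent of $n$. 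Summing over $i$ and using $\sum_i 1/(i!(d-i)!) = 2^d/d!$ yields $\|P\|_{\infty,[0,n]} \leq D\, (4Kd/\alpha)^d / d!$, and we take this quantity as $M(\alpha,D)$.

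The main subtlety is the extraction of the $d+1$ well-separated nodes: a naive pigeonhole of $[0,n]$ into $d+1$ equal subintervals fails when $\alpha$ is small, because some subinterval may contain no element of $I_{n,D}$. Picking evenly spaced \emph{order statistics} of $I_{n,D}$ instead of points at fixed geometric locations sidesteps this issue, and the condition $n \geq 4K(d+1)/\alpha$ is precisely what ensures $m \geq 4(d+1)$, which in turn delivers the critical spacing $t_{i+1}-t_i \geq n\alpha/(2Kd)$ needed for Lagrange interpolation to produce an $n$-independent bound.
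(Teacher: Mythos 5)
Your proof is correct and rests on the same core mechanism as the paper's: extract $d+1$ nodes from $I_{n,D}(x,\xast,\theta)$ that are pairwise separated by a fixed fraction of $n$ (the paper does this by a greedy ball-removal argument, you by evenly spaced order statistics, which works equally well given $m \geq 4(d+1)$) and then control the polynomial via Lagrange interpolation at those nodes. The only difference is cosmetic: the paper packages the conclusion as relative compactness of the rescaled differences in $\Ccal^0([0,1])$ via continuity of the Lagrange map, whereas you carry out the interpolation bound explicitly and obtain the concrete constant $M(\alpha,D) = D\,(4Kd/\alpha)^d/d!$.
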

It follows that $\theta \in \Tcal(\alpha,n,D)$ (resp. $\theta \in \Ucal(\alpha,n,D)$) is equivalent to~\eqref{eq_vraies_tendances_pas_toutes_seules_rappel} (resp.~\eqref{eq_tendances_parametres_pas_toutes_seules_rappel}) up to changing $M$:

\begin{definition}
\label{def_xetxstar_reference}
Let $\alpha \in (0,1]$, $n \geq 1$ and $D > 0$.
\begin{itemize}
\item[---] For all $\theta \in \Tcal(\alpha,n,D)$ and $\xast \in [\Kast]$, let $x(\theta,\xast,n, \alpha,D)$ be the smallest $x \in \Xcaltheta$ such that $I_{n,D}(x,\xast,\theta) \geq n\alpha/K$.

\item[---] For all $\theta \in \Ucal(\alpha,n,D)$ and $x \in \Xcaltheta$, let $\xast(\theta,x,n, \alpha,D)$ be the smallest $\xast \in [\Kast]$ such that $I_{n,D}(x,\xast,\theta) \geq n\alpha/K$.
\end{itemize}
In the following, we omit the dependency in $\alpha$ and $D$ and write $\xast(\theta,x,n)$ and $x(\theta,\xast,n)$.
\end{definition}

\begin{corollary}
\label{cor_TcalUcal_tendances_proches}
For all $\alpha \in (0,1]$ and $D > 0$, there exists $M(\alpha,D) > 0$ and $n_0 := 4K(d+1) / \alpha$ such that
\begin{equation*}
\forall n \geq n_0,
\quad \begin{cases}
\theta \in \Tcal(\alpha,n,D) \Rightarrow \forall \xast \in [\Kast], \  \| T^*_\xast - T^\theta_{x(\theta, \xast, n)} \|_{\infty, [0,n]} \leq M(\alpha,D), \\
\theta \in \Ucal(\alpha,n,D) \Rightarrow \forall x \in \Xcaltheta, \  \| T^*_{\xast(\theta,x,n)} - T^\theta_x \|_{\infty, [0,n]} \leq M(\alpha,D).
\end{cases}
\end{equation*}
\end{corollary}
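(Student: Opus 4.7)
The plan is to derive the corollary directly from Proposition~\ref{lemma_theta_dans_I_implique_tendances_proches} by identifying, for each index in question, a single trend index whose agreement set $I_{n,D}(\cdot,\cdot,\theta)$ is large enough to trigger the proposition. Fix $\alpha \in (0,1]$ and $D > 0$, and let $M(\alpha,D)$ and $n_0 = 4K(d+1)/\alpha$ be the constants supplied by the proposition. Throughout the argument, take $n \geq n_0$.

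For the first implication, suppose $\theta \in \Tcal(\alpha,n,D)$ and fix $\xast \in [\Kast]$. By definition of $\Tcal(\alpha,n,D)$, the set $\bigcap_{y \in [\Kast]} \bigcup_{x \in \Xcaltheta} I_{n,D}(x,y,\theta)$ has cardinality at least $n\alpha$, so a fortiori $|\bigcup_{x \in \Xcaltheta} I_{n,D}(x,\xast,\theta)| \geq n\alpha$. Since $|\Xcaltheta| = K^\theta \leq K$, the pigeonhole principle yields at least one $x \in \Xcaltheta$ such that $|I_{n,D}(x,\xast,\theta)| \geq n\alpha/K$. By Definition~\ref{def_xetxstar_reference}, the smallest such $x$ is exactly $x(\theta,\xast,n)$, hence $|I_{n,D}(x(\theta,\xast,n),\xast,\theta)| \geq n\alpha/K$, and Proposition~\ref{lemma_theta_dans_I_implique_tendances_proches} gives $\|T^*_\xast - T^\theta_{x(\theta,\xast,n)}\|_{\infty,[0,n]} \leq M(\alpha,D)$.

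For the second implication, suppose $\theta \in \Ucal(\alpha,n,D)$ and fix $x \in \Xcaltheta$. The definition of $\Ucal(\alpha,n,D)$ directly yields $|\bigcup_{\xast \in [\Kast]} I_{n,D}(x,\xast,\theta)| \geq n\alpha$, and since $|[\Kast]| = \Kast \leq K$, pigeonhole produces some $\xast \in [\Kast]$ with $|I_{n,D}(x,\xast,\theta)| \geq n\alpha/K$. By Definition~\ref{def_xetxstar_reference}, the smallest such index is $\xast(\theta,x,n)$, and applying the proposition again gives $\|T^*_{\xast(\theta,x,n)} - T^\theta_x\|_{\infty,[0,n]} \leq M(\alpha,D)$.

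I do not expect any real obstacle here: the corollary is a bookkeeping consequence of the proposition, and the only substantive ingredient is the pigeonhole step, which produces precisely the factor $1/K$ that has been built into the hypothesis of Proposition~\ref{lemma_theta_dans_I_implique_tendances_proches}. All the analytic content, namely the passage from a lower bound on the size of an agreement set to a supremum-norm bound over $[0,n]$, lives in the proposition itself; this is where one exploits that polynomials of degree at most $d$ that agree on a set of size proportional to $n$ up to an additive error $D$ must in fact be $M(\alpha,D)$-close on all of $[0,n]$, which is what forces the choice $n_0 = 4K(d+1)/\alpha$.
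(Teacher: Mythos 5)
Your proof is correct and follows exactly the paper's (largely implicit) argument: the pigeonhole step extracting an index with $|I_{n,D}(\cdot,\cdot,\theta)| \geq n\alpha/K$ is precisely the remark the paper makes just before Definition~\ref{def_xetxstar_reference}, and the rest is a direct application of Proposition~\ref{lemma_theta_dans_I_implique_tendances_proches} to the smallest such index. Nothing is missing.
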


\begin{figure}
\centering
\includegraphics[scale = 0.4]{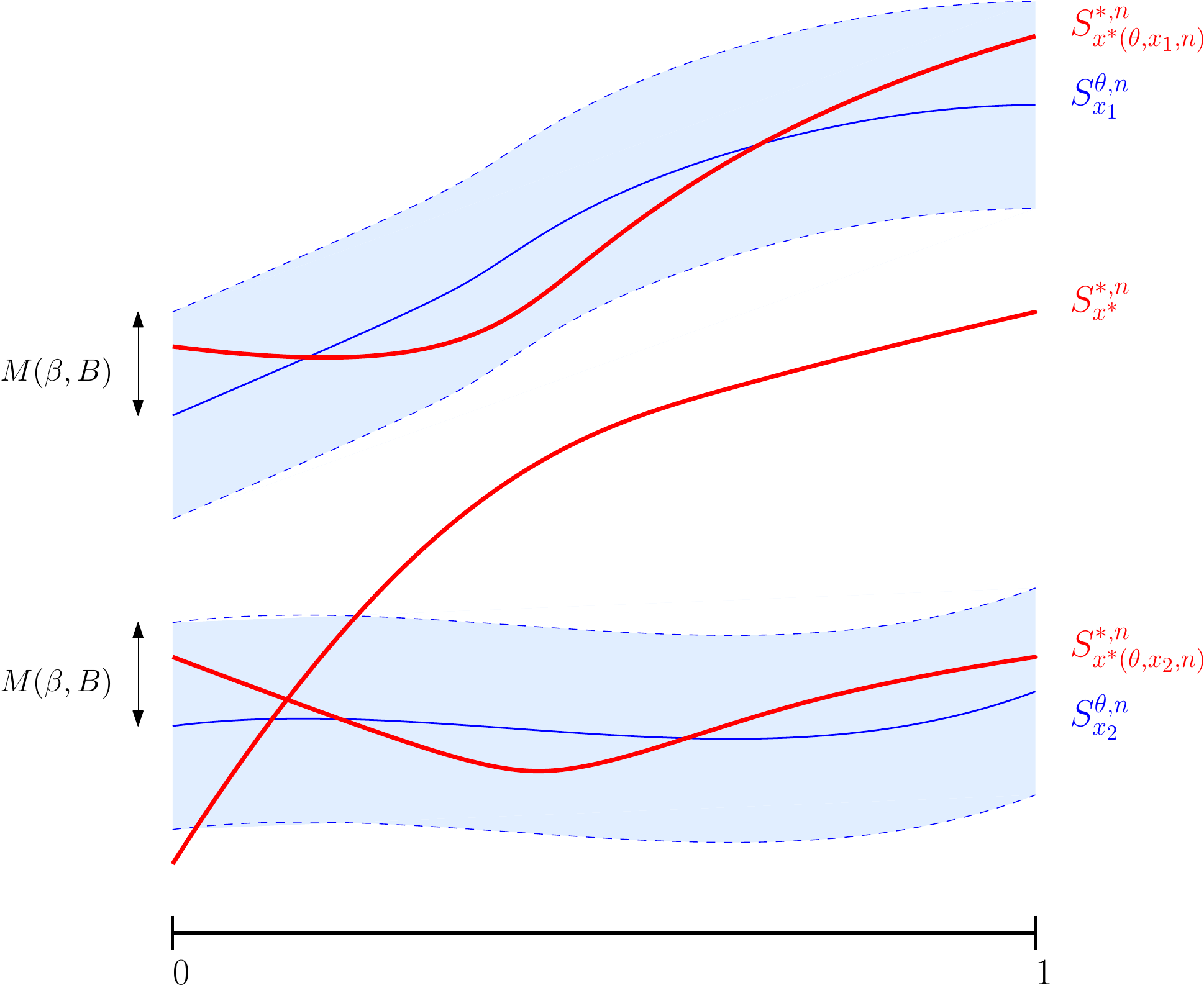}
\caption{Rescaled trends of a parameter in $\Ucal(\beta, n, B)$. Every parameter trend is at bounded distance of at least one true trend. However, some true trends may be far from all parameter trends.}
\label{dessin_Ucal}
\end{figure}

\begin{figure}
\centering
\includegraphics[scale = 0.4]{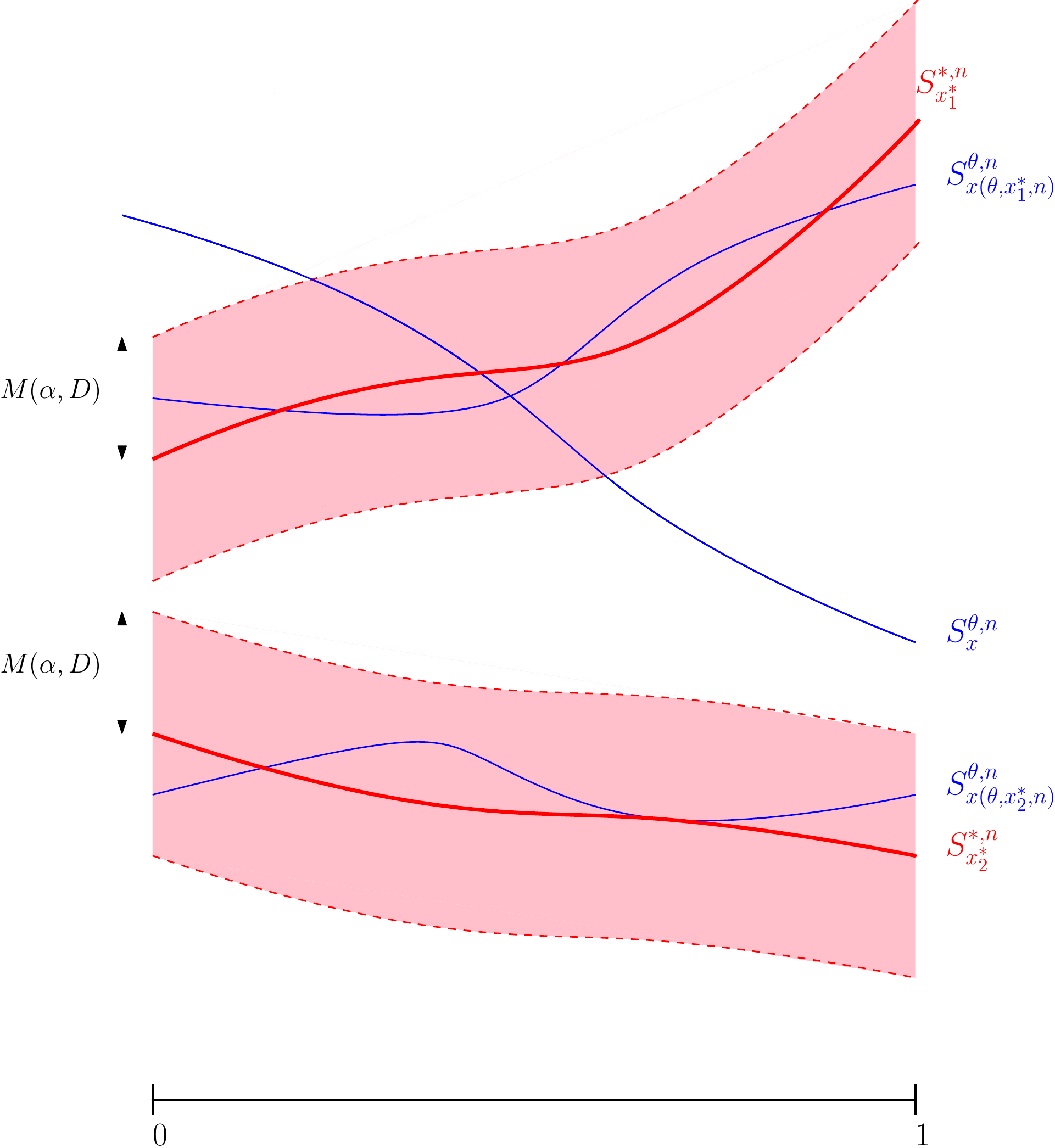}
\caption{Rescaled trends of a parameter in $\Tcal(\alpha, n, D)$. Every true trend is at bounded distance of at least one parameter trend. However, some parameter trends may be far from all true trends.}
\label{dessin_Tcal}
\end{figure}

\noindent
Hence, for all $\alpha, \beta \in (0,1]$, for all $M > 0$ and $D,B \geq M$ and for all $n$,
\begin{equation}
\label{eq_inclusion_ThetaOK_T_U}
\Theta^\text{OK}_n(M)
	\subset \Tcal(\alpha,n,D) \cap \Ucal(\beta,n,B)
	\subset \Theta^\text{OK}_n(M(\alpha,D) \vee M(\beta,B)).
\end{equation}

\noindent
Proposition~\ref{lemma_theta_dans_I_implique_tendances_proches} is a direct consequence of Arzelà–Ascoli's theorem and of the following result.
\begin{definition}
For all $\theta \in \Theta$, $n \in \Nbb^*$ and $x \in \Xcaltheta$, let
\begin{equation*}
S^{\theta, n}_x : u \in [0,1] \longmapsto T^\theta_x(nu)
\end{equation*}
be the trend $T^\theta_x$ whose time variable has been rescaled from $[0,n]$ to $[0,1]$. Likewise, define $S^{*,n}_\xast$ the rescaled true trend corresponding to the state $\xast$.
\end{definition}

\begin{theorem}
\label{th_compacite_S_pour_un_seul_I}
Let $\varepsilon \in (0,1]$ and $D > 0$. Then the set
\begin{equation*}
\Scal := \bigcup_{n \geq n_0} \  \bigcup_{\theta \in \Theta} \quad \bigcup_{(x,\xast) \in \Xcaltheta \times [\Kast] \text{ s.t. } |I_{n,D}(x,\xast,\theta)| \geq n\varepsilon} \{ S^{\theta,n}_{x} - S^{*,n}_\xast \}
\end{equation*}
is relatively compact in the set of continuous functions $\left( \Ccal^0([0,1]), \| \cdot\|_{\infty} \right)$.
\end{theorem}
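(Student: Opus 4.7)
The strategy is to apply the Arzel\`a--Ascoli theorem in $(\Ccal^0([0,1]),\|\cdot\|_\infty)$. Each element of $\Scal$ is a polynomial of degree at most $d$ on $[0,1]$, since $S^{\theta,n}_x - S^{*,n}_\xast$ is the composition of a degree-at-most-$d$ polynomial difference with the affine map $u\mapsto nu$. Polynomials of degree at most $d$ form a $(d+1)$-dimensional subspace of $\Ccal^0([0,1])$ on which all norms are equivalent, so it suffices to establish a uniform sup-norm bound on $\Scal$; equicontinuity will then follow automatically from Markov's inequality.

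Fix $P:=S^{\theta,n}_x-S^{*,n}_\xast\in\Scal$. By hypothesis, there exist at least $n\varepsilon$ integers $t\in\{1,\dots,n\}$ with $|P(t/n)|\leq D$, so the sublevel set $E:=\{u\in[0,1]:|P(u)|\leq D\}$ contains at least $n\varepsilon$ points of the grid $\{1/n,\dots,1\}$. Since $P^2-D^2$ has degree at most $2d$ and therefore at most $2d$ real zeros, $E$ is a union of at most $d+1$ disjoint closed subintervals of $[0,1]$. By pigeonhole, one of these subintervals, call it $I=[a,b]$, contains at least $n\varepsilon/(d+1)$ grid points; since consecutive grid points are spaced by $1/n$, this forces
\begin{equation*}
b-a \;\geq\; \frac{\varepsilon}{d+1}-\frac{1}{n}\;\geq\;\frac{\varepsilon}{2(d+1)}=:\ell,
\end{equation*}
provided $n$ exceeds some explicit threshold $n_0$ (which we are free to enlarge).

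Apply the affine bijection $I\to[-1,1]$: $P$ pulls back to a polynomial $\hat P$ of degree at most $d$ with $\|\hat P\|_{\infty,[-1,1]}\leq D$. A direct calculation shows that the image of $[0,1]$ under this map is contained in $[-2/\ell,2/\ell]$. By Chebyshev's extremal property, $|\hat P(v)|\leq D\,|T_d(v)|$ for $|v|\geq 1$, and the standard bound $|T_d(v)|\leq(2|v|)^d$ for $|v|\geq 1$ yields
\begin{equation*}
\|P\|_{\infty,[0,1]}\;\leq\;D\left(\frac{4}{\ell}\right)^{\!d}\;=\;D\left(\frac{8(d+1)}{\varepsilon}\right)^{\!d}=:A,
\end{equation*}
uniformly over $P\in\Scal$. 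Markov's inequality then gives $\|P'\|_{\infty,[0,1]}\leq 2d^2 A$ uniformly on $\Scal$, so $\Scal$ is uniformly bounded and uniformly Lipschitz, and Arzel\`a--Ascoli concludes.

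The main obstacle is the transition from \emph{discrete} information (a bound at finitely many, possibly clustered, grid points) to an $n$-independent \emph{continuous} $L^\infty$ bound on $[0,1]$. The key mechanism is that a sublevel set of a polynomial of bounded degree has boundedly many connected components, so pigeonhole forces one component to be a genuine interval of length at least $\ell$; a classical extension \`a la Chebyshev then propagates the bound from that interval to all of $[0,1]$ with a constant depending only on $d$ and $\varepsilon$.
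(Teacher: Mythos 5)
Your proof is correct, but it takes a genuinely different route from the paper. The paper's argument extracts, by a greedy separation procedure, $d+1$ interpolation nodes $u_1,\dots,u_{d+1}$ at mutual distance at least $\varepsilon/(4K(d+1))$ where $|S|\leq D$, and then realizes $\Scal$ as a subset of the continuous image (via the Lagrange interpolation map) of the compact set of well-separated node/value configurations; compactness is immediate and no explicit bound is computed. You instead work with the sublevel set $\{|P|\leq D\}$: since it has at most $d+1$ connected components, pigeonhole produces a single subinterval of length at least $\varepsilon/(2(d+1))$ on which $|P|\leq D$ holds \emph{everywhere} (not just at grid points, since the component is contained in the sublevel set), and the Chebyshev extremal property plus Markov's inequality then yield the explicit uniform bounds $\|P\|_{\infty,[0,1]}\leq D(8(d+1)/\varepsilon)^d$ and $\|P'\|_{\infty,[0,1]}\leq 2d^2 D(8(d+1)/\varepsilon)^d$. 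Your approach buys quantitative constants (and makes the uniform equicontinuity noted in the paper's remark completely explicit), at the price of invoking the Chebyshev comparison; the paper's approach is softer and avoids any extremal polynomial machinery. Two small points: with the paper's choice $n_0 = 4K(d+1)/\varepsilon$ one has $1/n\leq \varepsilon/(2(d+1))$ for all $n\geq n_0$, so your length estimate holds without any enlargement of $n_0$ (enlarging it would in any case change the set $\Scal$, though the finitely many omitted values of $n$ each contribute a bounded family of degree-$d$ polynomials and are harmless); and the degenerate case where $P$ is constant should be dispatched separately (then $|P|\leq D$ on all of $[0,1]$ and the bound is trivial), since $P^2-D^2$ is not then a nonzero polynomial.
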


\begin{remark}
This result, together with Arzelà–Ascoli's theorem, entails that $\Scal$ is uniformly equicontinuous in addition to being a bounded subset of $\Lbf^\infty([0,1])$. This will be used in Section~\ref{sec_integrated_likelihood}.
\end{remark}

\begin{proof}
Let $\varepsilon \in (0,1]$ and $D > 0$.
Let us first give another representation of the elements of $\Scal$.
\begin{lemma}
\label{lemme_carac_S}
For any $S \in \Scal$, there exists $(u_1^S, \dots, u_{d+1}^S) \in [0,1]^{d+1}$ such that
\begin{equation*}
\begin{cases}
\forall i \neq j, \quad |u_i^S - u_j^S| \geq \frac{\varepsilon}{4K(d+1)}, \\
\forall i, \quad |S(u_i^S)| \leq D.
\end{cases}
\end{equation*}
\end{lemma}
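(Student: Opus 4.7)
The plan is to translate the problem to a discrete greedy selection argument. By the definition of $\Scal$, any $S \in \Scal$ has the form $S(u) = T^\theta_x(nu) - T^*_\xast(nu)$ for some $(\theta, n, x, \xast)$ with $n \geq n_0 = 4K(d+1)/\varepsilon$ and $|I| \geq n\varepsilon$, where $I := I_{n,D}(x,\xast,\theta) \subseteq \{1, \ldots, n\}$. For every $t \in I$ the rescaled abscissa $t/n$ lies in $[0,1]$ and satisfies $|S(t/n)| \leq D$ by definition of $I_{n,D}$. Writing $\tilde I := \{t/n : t \in I\}$, the task reduces to extracting $d+1$ elements of $\tilde I$ that are pairwise separated by $\eta := \varepsilon/(4K(d+1))$.

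To do so, I would run the natural greedy selection: set $u_1^S := \min \tilde I$ and recursively $u_{i+1}^S := \min(\tilde I \cap [u_i^S + \eta, 1])$, stopping whenever this intersection becomes empty. The iterates are, by construction, strictly increasing, pairwise $\eta$-separated (hence automatically satisfying the first condition of the lemma since pairwise separation is equivalent to consecutive separation for sorted points), and each satisfies $|S(u_i^S)| \leq D$. The only non-trivial point is to show that the procedure cannot halt before producing $d+1$ selections.

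Assume it halts after yielding $u_1^S, \ldots, u_k^S$. The greedy rule forces $\tilde I \subseteq \bigcup_{i=1}^k [u_i^S, u_i^S + \eta)$: any element of $\tilde I$ lying in $[u_i^S + \eta, u_{i+1}^S)$ would contradict the minimality of $u_{i+1}^S$, and the stopping condition excludes elements above $u_k^S + \eta$. Since the points of $\tilde I$ are multiples of $1/n$, each interval of length $\eta$ contains at most $n\eta + 1$ such points, and combining with $|\tilde I| \geq n\varepsilon$ yields the inequality $n\varepsilon \leq k(n\eta + 1)$.

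Finally, the calibration $n \geq n_0 = 4K(d+1)/\varepsilon$ gives $n\eta \geq 1$, hence $n\eta + 1 \leq 2n\eta$, and therefore $k \geq n\varepsilon/(2n\eta) = 2K(d+1) \geq d+1$, which completes the argument. I do not expect a genuine obstacle here; the only points requiring care are the exhaustion step (verifying that the intervals $[u_i^S, u_i^S + \eta)$ really cover all of $\tilde I$) and checking that the specific constant $4K(d+1)$ in $n_0$ is tuned exactly so as to produce a count exceeding $d$.
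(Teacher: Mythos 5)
Your proof is correct and takes essentially the same approach as the paper: both arguments greedily extract $\eta$-separated points from the (rescaled) set $I_{n,D}$ and use a cardinality count, relying on $|I_{n,D}|\geq n\varepsilon$ and $n\eta\geq 1$, to guarantee at least $d+1$ selections. The only cosmetic difference is that the paper removes two-sided balls of radius $\eta n$ around arbitrarily chosen points and shows the residual set stays non-empty, while you select minima from the left and bound the number of covering intervals; the calibration of the constant $4K(d+1)$ plays the same role in both.
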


\begin{proof}
For $S \in \Scal$, let $n(S) \geq n_0$, $\theta$, $x \in \Xcaltheta$ and $\xast \in [\Kast]$ be such that $S = S^{\theta,n}_{x} - S^{*,n}_\xast$ and $|I_{n,D}(x,\xast,\theta)| \geq n\varepsilon$. Let us define $(u_1^S, \dots, u_{d+1}^S)$ iteratively.
Let $\Acal_0 = I_{n,D}(x,\xast,D)$ and, for all $i \geq 1$,
\begin{itemize}
\item $t_i^S\in\Acal_{i-1}$ ;
\item $\Acal_i = \Acal_{i-1} \setminus \bar{B}\left(t_i^S, \frac{\varepsilon}{4K(d+1)} n\right)$.
\end{itemize}

The closed ball $\bar{B}\left(t_i^S, \frac{\varepsilon}{4K(d+1)} n\right)$ contains at most $1 + \left\lfloor 2 \frac{\varepsilon}{4K(d+1)} n \right\rfloor \leq 3 \frac{\varepsilon}{4K(d+1)} n$ elements since $\frac{\varepsilon}{4K(d+1)} n \geq 1$. Thus, for all $i \geq 0$,
\begin{equation*}
| \Acal_i | \geq n \frac{\varepsilon}{K} \left( 1 - \frac{3}{4(d+1)} i \right).
\end{equation*}

In particular, $\Acal_i \neq \emptyset$ for all $i \in \{0, \dots, d+1\}$, which makes it possible to define $(t_i^S)_{1 \leq i \leq d+1}$. Taking $u_i^S = \frac{t_i^S}{n(S)}$ for all $i \in \{1, \dots, d+1\}$ concludes the proof.
\end{proof}

The next lemma is a straightforward consequence of the Lagrange form of the interpolation polynomial.

\begin{lemma}
\label{lemma_continuite_carac_S}
The mapping
\begin{equation}
\label{eq_lagrange_mapping}
\begin{aligned}
\left\{ (u_i)_i \in [0,1]^{d+1} \text{ s.t. } \inf_{i \neq j} |u_i - u_j| > 0 \right\} \times \Rbb^{d+1} &\longmapsto \left( \Ccal^0([0,1]), \| . \|_\infty\right)
\\
(u_1, \dots, u_{d+1}, s_1, \dots, s_{d+1}) \qquad \quad &\longmapsto \qquad P_{u,s}
\end{aligned}
\end{equation}
is continuous, where $P_{u,s}$ is the only polynomial with degree at most $d$ such that $P(u_i) = s_i$ for all $i \in \{1, \dots, d+1\}$.
\end{lemma}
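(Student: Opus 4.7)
The plan is to use the explicit Lagrange interpolation formula
\begin{equation*}
P_{u,s}(t) = \sum_{i=1}^{d+1} s_i L_i^u(t), \qquad L_i^u(t) = \prod_{j \neq i} \frac{t - u_j}{u_i - u_j},
\end{equation*}
which directly exhibits the unique polynomial of degree at most $d$ taking value $s_i$ at $u_i$. Since continuity is a local property, I would fix an arbitrary base point $(u^0, s^0)$ in the domain of~\eqref{eq_lagrange_mapping} and restrict to a neighborhood on which $\inf_{i \neq j} |u_i - u_j|$ is bounded below by some $\delta > 0$; such a neighborhood exists by definition of the domain.

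The key step is to show that for each $i$, the map $u \mapsto L_i^u$ from this neighborhood into $(\Ccal^0([0,1]), \|\cdot\|_\infty)$ is continuous. On the chosen neighborhood, each factor $\frac{t - u_j}{u_i - u_j}$ is jointly continuous in $(u, t)$ and uniformly bounded: the numerator lies in $[-1,1]$ since $t, u_j \in [0,1]$, and the denominator is at least $\delta$ in absolute value. Hence the product $L_i^u(t)$ is jointly continuous in $(u, t)$ on a compact set, and compactness of $[0,1]$ upgrades pointwise continuity in $u$ to continuity in the sup norm.

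The conclusion then follows immediately: writing $P_{u,s} = \sum_{i=1}^{d+1} s_i L_i^u$ exhibits $(u,s) \mapsto P_{u,s}$ as a finite sum of products of a scalar $s_i$ depending continuously on $s$ with an element $L_i^u$ of $\Ccal^0([0,1])$ depending continuously on $u$ in sup norm, which is a continuous operation into $(\Ccal^0([0,1]), \|\cdot\|_\infty)$. There is essentially no obstacle: the only point to keep in mind is to stay away from the diagonal $\{u_i = u_j\}$ where the Lagrange denominators degenerate, which is precisely what the restriction $\inf_{i \neq j} |u_i - u_j| > 0$ in the domain of~\eqref{eq_lagrange_mapping} guarantees.
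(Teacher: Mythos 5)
Your proof is correct and follows exactly the route the paper intends: the paper simply asserts the lemma as a straightforward consequence of the Lagrange form of the interpolation polynomial, and your argument fills in the details of precisely that approach (explicit Lagrange basis, denominators bounded below away from the diagonal, joint continuity on a compact set upgraded to sup-norm continuity).
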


To conclude, note that $\Scal$ is a subset of the image of the compact set $\{ (u_i)_i \in [0,1]^{d+1} \text{ s.t. } \inf_{i \neq j} |u_i - u_j| \geq \varepsilon / (4K(d+1)) \} \times [-D,D]^{d+1}$ by the mapping~\eqref{eq_lagrange_mapping}.
\end{proof}

\subsection{The MLE is in \texorpdfstring{$\Tcal(\alpha,n,D)$}{T(a,n,D)}}
\label{sec_MLE_pas_dans_Tcal}

The key idea of this section is that if one of the true trends is far from all parameter trends, then the observations coming from this true trend will significantly reduce the likelihood.

\bigskip

Let $\alpha \in (0,1)$, $n\geq 1$, $D>0$ and $\theta \notin \Tcal(\alpha,n,D)$, then
\begin{equation*}
\left|
		\bigcup_{\xast \in [\Kast]} \bigcap_{x \in \Xcaltheta} I_{n,D}(x,\xast,\theta)^\complement
	\right| > n(1-\alpha)
\end{equation*}
with the notations of Section~\ref{sec_compacity}. In particular, there exists $\xfarT(\theta) \in [\Kast]$ such that
\begin{equation}
\label{eq_minoration_card_Ifar}
\left|
		\bigcap_{x \in \Xcaltheta} I_{n,D}(x,\xfarT(\theta),\theta)^\complement
	\right| \geq n \frac{1-\alpha}{\Kast}.
\end{equation}

Write $I^\text{far}_n(\theta) := \bigcap_{x \in \Xcaltheta} I_{n,D}(x,\xfarT(\theta),\theta)^\complement$, then 
\begin{align}
\nonumber
\frac{1}{n} \ell_n(\theta)
	&= \frac{1}{n} \sum_{t=1}^n \log p^\theta(Y_t | Y_1^{t-1}) \\
	\label{eq_maj_logV_par_g}
	&\leq \log g(0) + \frac{1}{n} \sum_{t \in I^\text{far}_n(\theta)} 
		\one_{X_t = \xfarT(\theta)} \log \frac{g( \{D - Z^{\max}_t\}_+)}{g(0)}.
\end{align}

We used the fact that under Assumption \textbf{(Amax)},
\begin{align*}
p^\theta(Y_t | Y_1^{t-1})
	&= \sum_{x \in \Xcaltheta} p^\theta(X_t = x | Y_1^{t-1})
		\gamma^\theta_x(Y_t - T^\theta_x(t)) \\
	&\leq \sum_{\xast \in [\Kast]} \one_{X_t = \xast} \sup_{x \in \Xcaltheta} \gamma^\theta_x(Z_t + T^*_\xast(t) - T^\theta_x(t)) \\
	&\leq \sum_{\xast \in [\Kast]} \one_{X_t = \xast} \sup_{x \in \Xcaltheta} g(\{|T^*_\xast(t) - T^\theta_x(t)| - Z^{\max}_t\}_+ ) \\
	&\leq \sum_{\xast \in [\Kast]} \one_{X_t = \xast} g(\inf_{x \in \Xcaltheta} \{|T^*_\xast(t) - T^\theta_x(t)| - Z^{\max}_t\}_+ ).
\end{align*}

\begin{lemma}
For all $\theta \in \Theta$, $D>0$ and $\xast \in [\Kast]$, the set
\begin{equation}
\label{eq_Ifar_version_C0}
\bigcap_{x \in \Xcaltheta} 
\{ t \in [0,n] : |T^*_\xast(t) - T^\theta_x(t)| > D \}
\end{equation}
has at most $A := (d+1)^K$ connected components.
\end{lemma}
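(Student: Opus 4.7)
The plan is to reduce the statement to counting sign intervals of a univariate polynomial. For each $x \in \Xcaltheta$, set $f_x := T^*_\xast - T^\theta_x \in \Rbb_d[X]$ and define
\[
P_x(t) := f_x(t)^2 - D^2,
\]
a polynomial in $t$ of degree at most $2d$. Since $\{t \in [0,n] : |f_x(t)| > D\} = \{t \in [0,n] : P_x(t) > 0\}$, the set in~\eqref{eq_Ifar_version_C0} is exactly $\bigcap_{x \in \Xcaltheta} \{t \in [0,n] : P_x(t) > 0\}$, and the lemma becomes a statement about polynomial positivity sets.

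First I would bound the number of components of each individual set $\{t \in [0,n] : P_x(t) > 0\}$ by $d+1$. If $P_x$ is identically zero the set is empty; otherwise $P_x$ has at most $2d$ real roots, which partition $[0,n]$ into at most $2d+1$ open subintervals on which $P_x$ has constant sign. At most $\lceil (2d+1)/2 \rceil = d+1$ of these can carry the positive sign (possible multiple roots at which the sign does not change only decrease the count), so $\{P_x > 0\}$ consists of at most $d+1$ open intervals.

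The final step is an induction on $|\Xcaltheta| = K^\theta \leq K$, using the elementary fact that if $A, B \subset \Rbb$ are unions of at most $a$ and $b$ pairwise-disjoint intervals respectively, then $A \cap B$ is a union of at most $ab$ intervals, because $A \cap B = \bigcup_{i,j} (I_i \cap J_j)$ and each pairwise intersection is either empty or a single interval. Iterating this bound across the $K^\theta$ sets $\{P_x > 0\}$ in succession yields at most $(d+1)^{K^\theta} \leq (d+1)^K = A$ connected components, as required. There is no genuine obstacle: the lemma is a direct consequence of the polynomial nature of the trends, and the bound is purposely loose (a more careful argument would yield $Kd+1$) since the only thing that matters in the sequel is that it does not depend on $n$.
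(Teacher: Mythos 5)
Your proof is correct and follows essentially the same two-step route as the paper: bound the number of connected components of each individual set $\{t : |T^*_{x^*}(t) - T^\theta_x(t)| > D\}$ by $d+1$, then multiply across the at most $K$ sets in the intersection. The only difference is the counting device in the first step (sign intervals of $f_x^2 - D^2$ versus Rolle's theorem applied to $f_x$); your parenthetical on multiple roots is a little quick — an even-multiplicity root at which $P_x$ stays positive \emph{splits} a component rather than merging two — but counting boundary roots with multiplicity ($2(c-1) \leq 2d$ for $c$ components) still yields $c \leq d+1$, so nothing breaks.
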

Note that $I^\text{far}_n(\theta) = \eqref{eq_Ifar_version_C0} \cap \{1, \dots, n\}$.

\begin{proof}
The functions $(t \longmapsto T^*_\xast(t) - T^\theta_x(t))_{x \in \Xcaltheta}$ are polynomials whose degree is at most $d$. Their derivatives vanish at most $d-1$ times, and the set of times $t$ where they are larger than $D$ in absolute value is a union of segments containing either, a zero of their derivative, $+\infty$ or $-\infty$. Hence there are at most $d+1$ such segments. Thus, $\tilde{I}_{n,D,\xast}(\theta)$ is an intersection of at most $K$ sets, each of them having at most $d+1$ connected components. Therefore, one may take $A=(d+1)^K$.
\end{proof}

\begin{definition}
For all $n \in \Nbb^*$, $D > 0$, $\xast \in [\Kast]$ and $\theta \in \Theta$, write $J(n,D,\xast,\theta)$ the largest connected component of~\eqref{eq_Ifar_version_C0}. In case of tie, choose the first one for the usual order in $\Rbb$.
\end{definition}

Thus, by the pigeonhole principle and equation~\eqref{eq_minoration_card_Ifar},
\begin{equation*}
| J(n,D,\xfarT(\theta),\theta) \cap \{1, \dots, n\} | \geq n\frac{1-\alpha}{A\Kast}.
\end{equation*}

Write $J^\text{far}_n(\theta) := J(n,D,\xfarT(\theta),\theta) \cap \{1, \dots, n\}$, then using equation~\eqref{eq_maj_logV_par_g}:
\begin{align}
\label{eq_estimpasdansTcal_majorationln}
\frac{1}{n} \ell_n(\theta)
	&\leq \log g(0) + \frac{1}{n} \sum_{t \in J^\text{far}_n(\theta)} 
		\one_{X_t = \xfarT(\theta)} \log \frac{g( \{D - Z^{\max}_t\}_+)}{g(0)}.
\end{align}

\begin{lemma}
\label{lem_hoeffding}
Let $\delta > 0$ and assume \textbf{(Aerg)}. Then, almost surely,
\begin{equation*}
	\liminf_{n \rightarrow \infty} 
	\underset{|S| \geq \delta n}{\underset{S \text{ segment}}{\inf_{S \subset \{1, \dots, n\}}}}
	\inf_{\xast \in [\Kast]}
		\frac{1}{n} \sum_{t \in S} \one_{X_t = \xast}
	\geq \frac{\delta \sigma_-}{4}.
\end{equation*}

By ``segment", we mean a set of the form $[a,b] \cap \Zbb$ for some $(a,b) \in \Rbb^2$.
\end{lemma}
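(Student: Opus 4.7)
The plan is to apply a martingale concentration inequality to the hidden chain and then take a union bound over segments. Fix $\xast \in [\Kast]$ and a segment $S \subset \{1, \dots, n\}$, and set
\begin{equation*}
D_t := \one_{X_t = \xast} - \Pbb^*(X_t = \xast \mid X_1^{t-1}),
\end{equation*}
which is a martingale difference sequence with respect to the filtration generated by the hidden chain and satisfies $|D_t| \leq 1$. The Azuma--Hoeffding inequality yields, for every $u > 0$,
\begin{equation*}
\Pbb^*\!\left( \sum_{t \in S} D_t \leq -u \right) \leq \exp\!\left( -\frac{u^2}{2|S|} \right).
\end{equation*}

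Under \textbf{(Aerg)}, for $t \geq 2$ one has $\Pbb^*(X_t = \xast \mid X_1^{t-1}) = Q^*(X_{t-1}, \xast) \geq \sigma_-$, so $\sum_{t \in S} \Pbb^*(X_t = \xast \mid X_1^{t-1}) \geq \sigma_-(|S| - 1)$. Taking $u = \sigma_- |S|/2$, the complementary event gives
\begin{equation*}
\sum_{t \in S} \one_{X_t = \xast} \geq \frac{\sigma_- |S|}{2} - \sigma_-,
\end{equation*}
with failure probability at most $\exp(-\sigma_-^2 |S|/8) \leq \exp(-\sigma_-^2 \delta n/8)$ whenever $|S| \geq \delta n$.

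To handle the supremum over segments I would then apply a union bound over the at most $n^2$ integer segments of $\{1, \dots, n\}$ and over the $\Kast$ hidden states. The total failure probability is bounded by $\Kast n^2 \exp(-\sigma_-^2 \delta n/8)$, which is summable in $n$, so the Borel--Cantelli lemma yields an almost sure event on which, for $n$ large, every segment $S$ with $|S| \geq \delta n$ and every $\xast \in [\Kast]$ satisfy
\begin{equation*}
\frac{1}{n} \sum_{t \in S} \one_{X_t = \xast} \geq \frac{\delta \sigma_-}{2} - \frac{\sigma_-}{n}.
\end{equation*}
The bound $\delta \sigma_-/4$ in the statement then follows for all $n \geq 4/\delta$.

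The only delicate point is the uniformity over segments $S$, but since the number of candidate segments grows only polynomially in $n$ while Azuma--Hoeffding provides exponential concentration, the union bound is comfortable. It is worth emphasising that the martingale is built from the hidden chain alone, not conditioned on the observations $Y_t$, which is what keeps $|D_t| \leq 1$ and makes the argument independent of the trends and emission densities.
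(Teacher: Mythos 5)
Your proof is correct, but it takes a genuinely different route from the paper's. The paper invokes a Hoeffding inequality for uniformly ergodic Markov chains (deviation of the empirical occupation measure from the stationary probability, which is itself bounded below by $\sigma_-$ under \textbf{(Aerg)}), applies it to a covering of $\{1,\dots,n\}$ by roughly $2n/\delta$ disjoint blocks of length $\delta n/2$, and observes that every segment of length at least $\delta n$ contains at least one full block. You instead center the indicators by their one-step conditional probabilities, apply Azuma--Hoeffding to the resulting bounded martingale difference sequence, and union-bound over all $O(n^2)$ segments directly. Your argument is more self-contained: it exploits the minorization $Q^*(X_{t-1},\xast)\geq\sigma_-$ only through the conditional probabilities, with no appeal to stationarity or to an external Markov-chain concentration result, and it even yields the slightly better asymptotic constant $\delta\sigma_-/2$ in place of $\delta\sigma_-/4$. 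What the paper's covering device buys is a smaller union bound ($2n/\delta$ blocks instead of $n^2$ segments), but since the concentration is exponential in $n$ in both cases this is immaterial. The two delicate points in your version are minor and you handled both: the term $t=1$, whose conditional probability need not be bounded below by $\sigma_-$, is absorbed into the $-\sigma_-$ correction, and the final passage from $\delta\sigma_-/2 - \sigma_-/n$ to $\delta\sigma_-/4$ requires $n \geq 4/\delta$, which is harmless inside a $\liminf$.
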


\begin{proof}
The idea is to split $\{1, \dots, n\}$ into segments of size $\frac{\delta}{2} n$ and to control the infimum of the empirical mean over each segment. Each segment of size larger than $\delta n$ contains at least one of those segments. The proof is detailled in Section~\ref{sec_proof_infSegments}.
\end{proof}

Apply Lemma \ref{lem_hoeffding} to $S = J^\text{far}_n(\theta)$: almost surely,
\begin{equation}
\label{eq_minoration_proportionDansJ}
\liminf_{n \rightarrow \infty} 
	\inf_{\theta \notin \Tcal(\alpha,n,D)}
		\frac{1}{n} \sum_{t \in J^\text{far}_n(\theta)} \one_{X_t = \xfarT(\theta)}
	\geq \frac{(1-\alpha) \sigma_-}{4 A \Kast}.
\end{equation}

\begin{lemma}
\label{lemma_top_quantile}
Let $\delta \in (0,1)$, $(U_t)_{t \geq 1}$ a sequence of i.i.d. non-positive integrable random variables and $(\delta_n)_{n \geq 1}$ a non-decreasing sequence of $[0,1]$-valued random variables such that $\liminf_{n \rightarrow \infty}\delta_n \geq \delta$ a.s. For all $\beta \in [0,1]$, let us denote by $q_U(\beta)$ the $\beta$-quantile of $U_1$, i.e.
\begin{equation*}
q_U(\beta) = \inf \{ u \text{ s.t. } \Pbb(U_1 \leq u) \geq \beta \}.
\end{equation*}

Then, almost surely,
\begin{equation*}
\limsup_{n \rightarrow \infty} 
	\underset{|S| \geq \delta_n n}{\sup_{S \subset \{1, \dots, n\}}}
		\frac{1}{n} \sum_{t \in S} U_t
	\leq \Ebb[U_1 \one_{U_1 > q_U(1-\delta)}].
\end{equation*}

Equivalently, if $(V_t)_{t \geq 1}$ is a sequence of non-negative i.i.d. integrable random variables and $(\delta_n)_{n \geq 1}$ a non-increasing sequence of $[0,1]$-valued random variables such that $\limsup_{n \rightarrow \infty} \delta_n \leq \delta$ a.s., almost surely
\begin{equation*}
\limsup_{n \rightarrow \infty} 
	\underset{|S| \leq \delta_n n}{\sup_{S \subset \{1, \dots, n\}}}
		\frac{1}{n} \sum_{t \in S} V_t
	\leq \Ebb[V_1 \one_{V_1 \geq q_V(1-\delta)}].
\end{equation*}
\end{lemma}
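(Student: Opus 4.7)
The plan is to reduce the supremum to a sum of order statistics and then identify its almost sure limit by a Glivenko--Cantelli style argument. Denote by $U_{(1)} \geq U_{(2)} \geq \dots \geq U_{(n)}$ the order statistics of $U_1, \dots, U_n$. Since every $U_t$ is non-positive, adding any index to $S$ can only decrease $\sum_{t \in S} U_t$, so the supremum is attained at $|S| = k_n := \lceil \delta_n n \rceil$ with $S$ equal to the indices of the top $k_n$ values, giving
\begin{equation*}
\sup_{|S| \geq \delta_n n} \sum_{t \in S} U_t = \sum_{i=1}^{k_n} U_{(i)}.
\end{equation*}
The map $k \mapsto \sum_{i=1}^k U_{(i)}$ is non-increasing, which will let me dispose of the randomness of $\delta_n$: fixing $\varepsilon > 0$, the hypothesis $\liminf_n \delta_n \geq \delta$ ensures $k_n \geq k_n^\varepsilon := \lceil (\delta - \varepsilon) n \rceil$ eventually, so $\frac{1}{n}\sum_{i=1}^{k_n} U_{(i)} \leq \frac{1}{n}\sum_{i=1}^{k_n^\varepsilon} U_{(i)}$ almost surely.

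Next I would identify the almost sure limit of $\frac{1}{n} \sum_{i=1}^{k_n^\varepsilon} U_{(i)}$ by writing it as an integral against the empirical quantile function $\hat q_n$, namely $\frac{1}{n}\sum_{i=1}^{k_n^\varepsilon} U_{(i)} = \int_{1-k_n^\varepsilon/n}^1 \hat q_n(t)\,dt$. Glivenko--Cantelli on $U_1, \dots, U_n$ gives $\hat q_n \to q_U$ almost everywhere on $[0,1]$, and a truncation $U_t = U_t \one_{U_t > -K} + U_t \one_{U_t \leq -K}$ combined with the strong law of large numbers provides a uniform integrable envelope, so dominated convergence yields
\begin{equation*}
\frac{1}{n}\sum_{i=1}^{k_n^\varepsilon} U_{(i)} \underset{n \to \infty}{\longrightarrow} \int_{1-(\delta-\varepsilon)}^1 q_U(t)\,dt
\end{equation*}
almost surely. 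Letting $\varepsilon \to 0^+$ and using continuity of $\delta' \mapsto \int_{1-\delta'}^1 q_U(t)\,dt$ (a consequence of $U_1 \in L^1$) then produces $\limsup_n \frac{1}{n}\sup_{|S|\geq \delta_n n}\sum_{t\in S} U_t \leq \int_{1-\delta}^1 q_U(t)\,dt$.

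Finally, I would translate this integral bound into the claimed form. Setting $q := q_U(1-\delta) \leq 0$ (since $U_1 \leq 0$ forces the whole quantile function to be non-positive), the layer-cake identity
\begin{equation*}
\int_{1-\delta}^1 q_U(t)\,dt = \Ebb[U_1 \one_{U_1 > q}] + q\,\big(\delta - \Pbb(U_1 > q)\big),
\end{equation*}
combined with $q \leq 0$ and $\delta - \Pbb(U_1 > q) \geq 0$ (from the definition of the $(1-\delta)$-quantile), makes the correction term non-positive and yields the lemma. The second statement for non-negative $V_t$ follows by applying the first version to $U_t := -V_t$, using $q_U(\beta) = -q_V(1-\beta)$ and translating the supremum over $|S| \leq \delta_n n$ of $\sum V_t$ into the corresponding sum of top values of $V$. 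The main obstacle I expect is handling a possible atom of $U_1$ at the quantile $q$, which prevents the order-statistic sum from converging exactly to $\Ebb[U_1 \one_{U_1 > q}]$; this is precisely what the layer-cake identity, combined with the sign of $q$, absorbs at the last step.
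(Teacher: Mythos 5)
Your proof is correct, but it takes a genuinely different route from the paper's. The paper proves the second (non-negative $V_t$) statement first and directly: it rewrites the supremum as $\frac{1}{n}\sum_t V_t \one_{V_t \geq \hat{q}_V(1-\delta_n)}$, uses Hoeffding's inequality plus Borel--Cantelli to show the empirical quantile $\hat{q}_V(1-\delta_n)$ and the true quantile $q_V(1-\delta)$ separate at most $\sqrt{n \log n} + (\delta_n - \delta)n$ sample points, bounds each such term by $q_V(1-\delta/2)$, and concludes by the law of large numbers; the $U$ statement is then deduced. You go the other way: you prove the $U$ statement via the trimmed-sum representation $\frac{1}{n}\sum_{i=1}^{k_n} U_{(i)} = \int_{1-k_n/n}^{1} \hat{q}_n(t)\,dt$, pass to the limit by Glivenko--Cantelli and dominated convergence (legitimate, since the integration range stays bounded away from $0$ so the empirical quantile functions are eventually uniformly bounded there), and compare the exact limit $\int_{1-\delta}^{1} q_U(t)\,dt$ to the stated bound via the identity with correction term $q_U(1-\delta)\bigl(\delta - \Pbb(U_1 > q_U(1-\delta))\bigr) \leq 0$. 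Your route buys something the paper's does not: it identifies the exact almost-sure limit of the trimmed mean, which is strictly sharper than the stated bound when $U_1$ has an atom at the quantile; the paper's route is more quantitative (a $\sqrt{\log n / n}$ control on the quantile mismatch) and avoids any discussion of convergence of quantile functions. Two small imprecisions worth fixing: the identity $q_U(\beta) = -q_V(1-\beta)$ you invoke when deducing the second statement can fail at atoms or flat stretches of the distribution function --- only the inequality $-q_U(\beta) \geq q_V(1-\beta)$ holds in general, but this is the direction you need, so the deduction survives; and the ``truncation plus strong law'' justification of the dominating function should be replaced (or at least supplemented) by the simpler observation that $|\hat{q}_n(t)| \leq |\hat{q}_n(c)|$ for $t \geq c := 1-\delta+\varepsilon/2$ and that $\hat{q}_n(c)$ is eventually bounded below by $q_U(c/2)$ via Glivenko--Cantelli.
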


\begin{remark}
The supremum is taken over all subsets $S$, not only segments.
\end{remark}

\begin{proof}
Proof in Section~\ref{sec_proof_cvgFractionSomme}.
\end{proof}

For all $t \geq 1$ and $D > 0$, let $U_t^D = \log \frac{g( \{ D - Z^{\max}_t\}_+)}{g(0)}$. $U_t^D$ is non-positive by definition. Then, taking
\begin{align*}
\begin{cases}
\delta = \frac{(1-\alpha) \sigma_-}{4 A \Kast}, \\
\displaystyle \delta_n = \inf_{m \geq n} \inf_{\theta \notin \Tcal(\alpha,m,D)}
		\frac{1}{m} \sum_{t \in J^\text{far}_m(\theta)} \one_{X_t = \xfarT(\theta)},
\end{cases}
\end{align*}
one has $\liminf_{n \rightarrow \infty} \delta_n \geq \delta$ by equation~\eqref{eq_minoration_proportionDansJ}. Therefore, Lemma~\ref{lemma_top_quantile} combined with equation~\eqref{eq_estimpasdansTcal_majorationln} implies that almost surely,
\begin{align*}
\limsup_{n \rightarrow \infty} \sup_{\theta \notin \Tcal(\alpha, n, D)} \frac{1}{n} \ell_n(\theta)
	&\leq \log g(0) + \Ebb[U_1^D \one_{U_1^D \geq q_{U^D}(1 - \frac{(1-\alpha) \sigma_-}{4 A \Kast})}].
\end{align*}

Note that $U_t^D = f^D(Z^{\max}_t) \leq 0$ where $f^D : z \in \Rbb_+ \longmapsto \log \frac{g( \{ D - z\}_+)}{g(0)}$. $f^D$ is non-decreasing, so
$q_{f^D(Z^{\max})}(1 - \delta) \leq f^D(q_{Z^{\max}}(1 - \delta))$ for all $\delta > 0$.
%
Thus, for all $z \geq 0$,
\begin{align*}
\one_{z \geq q_{Z^{\max}}(1-\delta)}
	\leq \one_{f^D(z) \geq f^D(q_{Z^{\max}}(1-\delta))}
	\leq \one_{f^D(z) \geq q_{U^D}(1-\delta)},
\end{align*}
and since $U_1^D \leq 0$,
\begin{align*}
\Ebb[U_1^D \one_{U_1^D \geq q_{U^D}(1 - \frac{(1-\alpha) \sigma_-}{4 A \Kast})}]
	\leq \Ebb[U_1^D \one_{Z^{\max}_1 \geq q_{Z^{\max}}(1-\frac{(1-\alpha) \sigma_-}{4 A \Kast})}].
\end{align*}

Then, for all $\delta > 0$, the monotone convergence theorem applied to the right-hand side entails
\begin{equation*}
\Ebb[U_1^D \one_{U_1^D \geq q_{U^D}(1 - \delta)}] \underset{D \rightarrow + \infty}{\longrightarrow} - \infty.
\end{equation*}

Thus, under the assumptions of Corollary~\ref{cor_Arate}, there exists $D(\alpha) < \infty$ such that
\begin{equation*}
\limsup_{n \rightarrow \infty} \sup_{\theta \notin \Tcal(\alpha, n, D(\alpha))} \frac{1}{n} \ell_n(\theta)
	\leq \ell(\theta^*) - 1,
\end{equation*}
so that almost surely, for $n$ large enough,
\begin{equation*}
\hat{\theta}_n \in \Tcal(\alpha,n,D(\alpha)).
\end{equation*}

\subsection{The MLE is in \texorpdfstring{$\Ucal(\beta,n,B)$}{U(b,n,B)}}
\label{sec_MLE_pas_dans_Ucal}

Let $\alpha,\beta \in (0,1)$, $n\geq \frac{4K(d+1)}{1-\alpha}$, $D>0$, $B > M(\alpha,D)$ and $\theta \in \Tcal(\alpha,n,D) \setminus \Ucal(\beta,n,B)$. Since $\theta \notin \Ucal(\beta,n,B)$, one of its trends is far from all true trends. We show that removing the state of this trend increases the likelihood of the observations. An interpretation is that because of \textbf{(Aerg)}, a proportion at least $\sigma_-$ of the observations is supposed to come from the (superfluous) state, but no observations appears in the vicinity of its trend, which penalizes the likelihood.

By definition of $\Ucal(\beta,n,B)$, there exists $\xfarU(\theta) \in \Xcaltheta$ such that
\begin{equation}
\label{eq_minoration_card_IU}
\left|
		\bigcap_{\xast \in [\Kast]} I_{n,B}(\xfarU(\theta),\xast,\theta)^\complement
	\right| > n (1-\beta).
\end{equation}

Write $I^U_n(\theta) := \bigcap_{\xast \in [\Kast]} I_{n,B}(\xfarU(\theta),\xast,\theta)^\complement$ and let $\theta^U$ be the parameter of the HMM with $K^\theta - 1$ hidden states, with hidden state space $\Xcal^{\theta^U} = \Xcaltheta \setminus \{ \xfarU(\theta) \}$ and other parameters defined by
\begin{equation*}
\begin{cases}
\forall x \in \Xcal^{\theta^U}, \quad \pi^{\theta^U}(x) = \frac{\pi^\theta(x)}{1 - \pi^\theta(\xfarU(\theta))}, \\
\forall x,x' \in \Xcal^{\theta^U}, \quad Q^{\theta^U}(x,x') = \frac{Q^\theta(x,x')}{1 - Q^\theta(x,\xfarU(\theta))}, \\
\forall x \in \Xcal^{\theta^U}, \quad \gamma^{\theta^U}_x = \gamma^\theta_x, \\
\forall x \in \Xcal^{\theta^U}, \quad T^{\theta^U}_x = T^\theta_x.
\end{cases}
\end{equation*}

By construction of $\Theta$, $\theta^U \in \Theta$.
Note that for all $x,x' \in \Xcal^{\theta^U}$, 
\begin{align*}
\pi^{\theta^U}(x)
&= \Pbb^\theta(X_1 = x \, | \, X_1 \neq \xfarU(\theta)) \\
	&= \Pbb^\theta(X_1 = x \, | \, \forall t \geq 1, X_t \neq \xfarU(\theta)) 
\end{align*}
and
\begin{align*}
\forall s \geq 1, \quad Q^{\theta^U}(x,x')
	&= \Pbb^\theta(X_{s+1} = x' \, | \, X_s = x, X_{s+1} \neq \xfarU(\theta)) \\
	&= \Pbb^\theta(X_{s+1} = x' \, | \, X_s = x, \forall t \geq 1, X_t \neq \xfarU(\theta)),
\end{align*}
so that
\begin{equation*}
\forall \in \sigma(Y_t \, | \, t \geq 1), \quad \Pbb^{\theta^U}(A) = \Pbb^\theta(A \, | \, \forall t \geq 1, X_t \neq \xfarU(\theta)). 
\end{equation*}

Then
\begin{align*}
\frac{1}{n}\ell_n(\theta^U) = \frac{1}{n}\sum_{t=1}^n \log p^\theta(Y_t \, | \, Y_1^{t-1}, \xfarU(\theta) \notin X_1^t),
\end{align*}
with the abuse of notation $x \in X_1^t \Leftrightarrow (\exists s \in \{1, \dots, t\} \  X_s = x)$.
By Assumption \textbf{(Aerg)},
\begin{align*}
p^\theta(Y_t \, | \, Y_1^{t-1})
	&= p^\theta(Y_t \, | \, X_t=\xfarU(\theta)) p^\theta(X_t=\xfarU(\theta) \, | \, Y_1^{t-1}) \\
		&\quad + p^\theta(Y_t \, | \, X_t\neq \xfarU(\theta),Y_1^{t-1}) p^\theta(X_t\neq \xfarU(\theta) \, | \, Y_1^{t-1}) \\
	&\leq (1-\sigma_-) p^\theta(Y_t \, | \, X_t = \xfarU(\theta))
		+ (1-\sigma_-) p^\theta(Y_t \, | \, X_t\neq \xfarU(\theta), Y_1^{t-1}),
\end{align*}
hence
\begin{multline*}
\frac{1}{n} \ell_n(\theta)
		- \underbrace{\frac{1}{n}\sum_{t=1}^n\log p^\theta(Y_t\, | \, X_t\neq \xfarU(\theta), Y_1^{t-1})}_{(i)}
	\leq \log(1-\sigma_-) \\
    	+ \underbrace{\frac{1}{n}\sum_{t=1}^n\log\left(1 + \frac{p^\theta(Y_t\, | \, X_t = \xfarU(\theta))}{p^\theta(Y_t\, | \, X_t\neq \xfarU(\theta), Y_1^{t-1})}\right)}_{(ii)}.
\end{multline*}

The next steps are:
\begin{itemize}
\item Prove that $(i)$ is close to $\frac{1}{n}\ell_n(\theta^U)$ for large enough $n$.
\item Prove that $(ii)$ goes to zero uniformly in $\theta \in \Tcal(\alpha,n,D) \setminus \Ucal(\beta,n,B)$.
\end{itemize}

\paragraph*{First step: controlling $(i)$}

We shall prove that for an adequate choice of $\beta$ and $B$, almost surely
\begin{equation*}
\limsup_{n \rightarrow \infty} \sup_{\theta \in \Tcal(\alpha,n,D) \setminus \Ucal(\beta,n,B)} \left| (i) - \frac{1}{n} \ell_n(\theta^U) \right|
	\leq \frac{-\log(1 - \sigma_-)}{3}.
\end{equation*}

The following forgetting property allows to control what happens in the distant past.

\begin{lemma}
\label{lem_oubli}
For all $t \geq 1$, $\theta\in\Theta$, for any probability measures $\mu$ and $\nu$ on $\Xcaltheta$, for all $x \in \Xcaltheta$ and $Y_0^t$,
\begin{align*}
| \log p^\theta(Y_t \, | \, Y_0^{t-1}, X_t \neq x, X_0 \sim \mu)
	- \log p^\theta(Y_t \, | \, Y_0^{t-1}, X_t \neq x, X_0 \sim \nu) |
	\leq C \rho^t
\end{align*}
where $\rho = 1 - \frac{\sigma_-}{1 - \sigma_-}$ and $C = \frac{2}{\rho (1 - \rho)^3}$.
\end{lemma}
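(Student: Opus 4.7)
}
The plan is to reduce the statement to the standard exponential forgetting of the filter for homogeneous HMMs (the ``Exponential forgetting'' lemma cited earlier in Section~2), with the event $\{X_t \neq x\}$ handled as a soft additional weight on the last transition step. First, I would decompose the quantity of interest using Bayes' rule:
\begin{equation*}
\log p^\theta(Y_t \, | \, Y_0^{t-1}, X_t \neq x, X_0 \sim \mu)
= \log p^\theta(Y_t, X_t \neq x \, | \, Y_0^{t-1}, X_0 \sim \mu)
 - \log p^\theta(X_t \neq x \, | \, Y_0^{t-1}, X_0 \sim \mu),
\end{equation*}
and compare termwise. Denoting by $f_\mu^{t-1}(\cdot) := p^\theta(X_{t-1} = \cdot \, | \, Y_0^{t-1}, X_0 \sim \mu)$ the filter at time $t-1$, both terms are of the form $\log \sum_{x_{t-1}} \phi(x_{t-1}) f_\mu^{t-1}(x_{t-1})$ for some nonnegative ``kernel'' $\phi$ depending on $(Y_t, Q^\theta, \gamma^\theta, x)$ only.

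For the numerator I would take
\begin{equation*}
\phi(x_{t-1}) := \sum_{x_t \neq x} Q^\theta(x_{t-1}, x_t) \, \gamma^\theta_{x_t}(Y_t - T^\theta_{x_t}(t)),
\end{equation*}
and for the denominator $\psi(x_{t-1}) := 1 - Q^\theta(x_{t-1}, x)$. The key structural observation is that, thanks to \textbf{(Aerg)}, for any fixed value of $Y_t$ the ratio $\max \phi / \min \phi$ is bounded by $(1-\sigma_-)/\sigma_-$: indeed $Q^\theta(x_{t-1}, x_t) \in [\sigma_-, 1-\sigma_-]$ for every $x_t \neq x$, so $\phi(x_{t-1}) = \sigma_-(1 + O(1)) \cdot \sum_{x_t \neq x} \gamma^\theta_{x_t}(\cdots)$ up to a $(1-\sigma_-)/\sigma_-$ factor; the same holds for $\psi$, whose values lie in $[(K^\theta-1)\sigma_-, 1-\sigma_-]$. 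Combining this with the elementary inequality $|\log A - \log B| \leq |A-B| / (A \wedge B)$ and the bound $|\sum \phi(f_\mu - f_\nu)| \leq \|\phi\|_\infty \, \|f_\mu^{t-1} - f_\nu^{t-1}\|_1$, each of the two log-differences is bounded by a universal multiple of $\|f_\mu^{t-1} - f_\nu^{t-1}\|_1$, \emph{independently of} $Y_t, \theta, x, \mu, \nu$.

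The final step is to invoke the ``Exponential forgetting'' lemma already stated in Section~2 (originating from Corollary~1 of~\cite{douc2004asymptotic}), which asserts that $\|f_\mu^{t-1} - f_\nu^{t-1}\|_1 \leq C_0 \, \rho^{t-1}$ with $\rho = 1 - \sigma_-/(1-\sigma_-)$. Summing the two contributions yields $|\text{LHS}| \leq C' \rho^{t-1} = (C'/\rho) \, \rho^t$, which is the announced bound (up to adjusting the constant; matching the explicit constant $C = 2/(\rho(1-\rho)^3)$ would require tracking the constants in Douc et al.'s forgetting lemma more carefully, but is not essential for how the result is used in Section~\ref{sec_MLE_pas_dans_Ucal}).

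The main obstacle I anticipate is the uniform lower bound on $\phi$: one must really use that the \emph{transition} weights $Q^\theta(x_{t-1},x_t)$ are pinned in a bounded range by \textbf{(Aerg)}, rather than the emission densities $\gamma^\theta_{x_t}(\cdots)$ themselves, since the latter could be arbitrarily small for bad $Y_t$. Isolating $Q^\theta$ from $\gamma^\theta$ as above is what makes the Lipschitz constant of $f \mapsto \log \sum \phi(x)f(x)$ uniform in $Y_t$ and in $\theta$, which is exactly what is required to chain this bound with the standard filter forgetting.
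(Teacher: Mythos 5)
Your proposal is correct and follows essentially the same route as the paper's proof: both write the conditional density as the ratio $B_\mu/A_\mu$ of the filter integrated against kernels whose values are, thanks to \textbf{(Aerg)}, pinned between $\sigma_-$ and $1-\sigma_-$ times a common $Y_t$-dependent factor, apply $|\log a - \log b| \leq |a-b|/(a \wedge b)$, and conclude with the exponential forgetting of the filter $\|\phi_\mu - \phi_\nu\|_1 \leq \rho^{t-1}$. The only difference is that you bound the numerator and denominator terms separately rather than handling the quotient at once, which in fact yields a slightly smaller constant than the stated $C = \frac{2}{\rho(1-\rho)^3}$, so the announced bound still holds.
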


\begin{proof}
Proof in Section \ref{sec_proof_oubli}.
\end{proof}

Let $a \in \Nbb^*$. It follows from Lemma \ref{lem_oubli} that for all $t$, almost surely,
\begin{align}
\label{control_i_part1}
\left|\log p^\theta(Y_t \, | \, Y_1^{t-1}, X_t\neq \xfarU(\theta)) - \log p^\theta(Y_t \, | \, Y_1^{t-1}, X_t \neq \xfarU(\theta), \xfarU(\theta) \notin X_1^{t-a})\right| \leq C \rho^a.
\end{align}

It remains to add $X_{t-a+1}^{t-1}$ to the conditioning. This is the goal of the following lemma.

\begin{lemma}
\label{lemma_conditionnementEtatsPasses}
Assume \textbf{(Amax)} and \textbf{(Amin)}. Then for all $a \in \Nbb^*$,
\begin{align*}
\sup_{\theta \in \Tcal(\alpha,n,D) \setminus \Ucal(\beta,n,B)} & 
	\Bigg| \frac{1}{n} \sum_{t=1}^n \log p^\theta(Y_t | Y_1^{t-1}, X_t \neq \xfarU(\theta), \xfarU(\theta) \notin X_1^{t-a}) \\
	&\hspace{5cm}- \frac{1}{n} \sum_{t=1}^n \log p^\theta(Y_t | Y_1^{t-1}, \xfarU(\theta) \notin X_1^t) \Bigg| \\
	&\leq \frac{2 a K^2}{\sigma_-^3} \left( \beta + \frac{1}{n} \sum_{i=1}^{n-1}
		\left( 1 \wedge \frac{g(\{B - Z^{\max}_i\}_+)}{m(Z^{\max}_i + M(\alpha,D))} \right) \right) \\
	&=: \frac{2 a K^2}{\sigma_-^3} \left( \beta + \frac{1}{n} \sum_{i=1}^{n-1} h_U(B, Z^{\max}_i) \right).
\end{align*}
\end{lemma}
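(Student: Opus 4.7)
The proof will follow the same structure as that of Lemma~\ref{lemma_conditionnementBlocsPasses}, with the unobserved excluded state $\xfarU(\theta)$ playing the role that the unobserved true block played there, but with the additional twist that the trend $T^\theta_{\xfarU(\theta)}$ is only known to be far from all true trends on $I^U_n(\theta)$, not on the whole time interval. The first step is to produce, for each $t$, an analogue of equation~\eqref{eq_proof_recentBlocks_erreurBlock} by decomposing the conditioning event in the first log-density according to whether $\xfarU(\theta)$ appears in $X_{(t-a+1) \vee 1}^{t-1}$ or not. Combined with the standard lower bound on the predictive density granted by \textbf{(Aerg)} and the elementary inequality $|\log a - \log b| \leq |a - b|/(a \wedge b)$, this yields
\begin{multline*}
\bigl| \log p^\theta(Y_t | Y_1^{t-1}, X_t \neq \xfarU(\theta), \xfarU(\theta) \notin X_1^{t-a}) - \log p^\theta(Y_t | Y_1^{t-1}, \xfarU(\theta) \notin X_1^t) \bigr| \\
\leq \frac{2}{\sigma_-} \, p^\theta\bigl( \xfarU(\theta) \in X_{(t-a+1) \vee 1}^{t-1} \bigm| Y_1^{t-1}, X_t \neq \xfarU(\theta), \xfarU(\theta) \notin X_1^{t-a} \bigr).
\end{multline*}

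The second step is a union bound over $i \in \{(t-a+1) \vee 1, \ldots, t-1\}$, which reduces this posterior to a sum of marginals $p^\theta(X_i = \xfarU(\theta) | \cdots)$. Smoothing each marginal against $X_{i-1}$ and $X_{i+1}$ and using \textbf{(Aerg)} to lower bound $Q^\theta(x_{i-1}, x)$ and $Q^\theta(x, x_{i+1})$ by $\sigma_-$ in the denominators, exactly as in the derivation of~\eqref{eq_proof_recentBlocks_densites}, yields
\begin{equation*}
p^\theta\bigl( \xfarU(\theta) \in X_{(t-a+1) \vee 1}^{t-1} \bigm| \cdots \bigr)
\leq \frac{K^2}{\sigma_-^2} \sum_{i = (t-a+1) \vee 1}^{t-1}
\frac{\gamma^\theta_{\xfarU(\theta)}(Y_i - T^\theta_{\xfarU(\theta)}(i))}{\displaystyle\sum_{x \in \Xcaltheta} \gamma^\theta_x(Y_i - T^\theta_x(i))}.
\end{equation*}

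The third step bounds this ratio at each $i$. For $i \in I^U_n(\theta)$, the trend $T^\theta_{\xfarU(\theta)}$ is at distance at least $B$ from every true trend, so $|Y_i - T^\theta_{\xfarU(\theta)}(i)| \geq B - Z^{\max}_i$ and \textbf{(Amax)} bounds the numerator by $g(\{B - Z^{\max}_i\}_+)$. Since $\theta \in \Tcal(\alpha, n, D)$ with $B \geq M(\alpha, D)$, Corollary~\ref{cor_TcalUcal_tendances_proches} supplies an index $x \in \Xcaltheta$ with $\|T^*_{X_i} - T^\theta_x\|_{\infty,[0,n]} \leq M(\alpha, D)$, so $|Y_i - T^\theta_x(i)| \leq Z^{\max}_i + M(\alpha, D)$ and \textbf{(Amin)} lower bounds at least one term of the denominator by $m(Z^{\max}_i + M(\alpha, D))$; this gives the bound $h_U(B, Z^{\max}_i)$. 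For the at most $n\beta$ indices $i \notin I^U_n(\theta)$ (see~\eqref{eq_minoration_card_IU}), the trivial bound by $1$ suffices. Summing over $t$ and $i$ (each $i$ appears in at most $a$ summations) produces the claimed prefactor $2aK^2/\sigma_-^3$ together with the additive $\beta$ term.

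The main obstacle, compared to the purely block-level argument of Section~\ref{sec_proof_conditionnementBlocsPasses}, is the necessity to track separately the ``good'' indices $I^U_n(\theta)$, where the distance to every true trend is genuinely large and the envelope $g$ can be invoked, and the ``bad'' indices, which for individual $\theta$ need not be negligible but whose total contribution is uniformly controlled by $\beta$. This dichotomy is precisely what produces the extra additive $\beta$ in the bound and what will make the lemma usable in the subsequent step, where $\beta$ will eventually be driven to $0$.
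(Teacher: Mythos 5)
Your proposal is correct and follows essentially the same route as the paper: the same three-step decomposition (log-difference bounded by $\frac{2}{\sigma_-}$ times the posterior probability of a recent visit to $\xfarU(\theta)$, then a union bound with smoothing against $X_{i-1},X_{i+1}$ and \textbf{(Aerg)} giving the $K^2/\sigma_-^2$ factor, then the dichotomy between $i \in I^U_n(\theta)$ and $i \notin I^U_n(\theta)$ using \textbf{(Amax)}, \textbf{(Amin)} and Corollary~\ref{cor_TcalUcal_tendances_proches}), and the same final summation accounting for the $\beta$ term via $|I^U_n(\theta)| > n(1-\beta)$.
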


\begin{proof}[Overview of the proof]
First, show that for all $\theta \in \Tcal(\alpha,n,D) \setminus \Ucal(\beta,n,B)$, $t \leq n$, $a \in \Nbb^*$ and $y_1^t \in \Ycal^t$,
\begin{align}
\nonumber
\Big| \log p^\theta & (y_t | y_1^{t-1}, X_t \neq \xfarU(\theta), \xfarU(\theta) \notin X_1^{t-a})
	- \log p^\theta(y_t | y_1^{t-1}, \xfarU(\theta) \notin X_1^t) \Big| \\
	\label{eq_proof_recentStates_erreurBlock}
	&\leq \frac{2}{\sigma_-} p^\theta \left(\xfarU(\theta) \in X_{(t-a+1) \vee 1}^{t-1} | y_1^{t-1}, X_t \neq \xfarU(\theta), \xfarU(\theta) \notin X_1^{t-a} \right) \\
	\label{eq_proof_recentStates_densites}
	&\leq \frac{2 K^2}{\sigma_-^3} \sum_{i=(t-a+1) \vee 1}^{t-1}
		\frac{\gamma^\theta_{\xfarU(\theta)}(y_i - T^\theta_{\xfarU(\theta)}(i))}{\displaystyle \sum_{x \in \Xcaltheta} \gamma^\theta_{x}(y_i - T^\theta_x(i))}.
\end{align}

Then, under \textbf{(Amax)} and \textbf{(Amin)}, for all $\theta \in \Tcal(\alpha,n,D) \setminus \Ucal(\beta,n,B)$ and $i \leq n$
\begin{equation}
\label{eq_proof_recentStates_utilisationEncadrements}
\frac{\gamma^\theta_{\xfarU(\theta)}(Y_i - T^\theta_{\xfarU(\theta)}(i))}{\displaystyle \sum_{x \in \Xcaltheta} \gamma^\theta_{x}(Y_i - T^\theta_x(i))} \leq
\begin{cases}
\displaystyle 1 \wedge \frac{g(\{B - Z^{\max}_i\}_+)}{m(Z^{\max}_i + M(\alpha,D))} =: h_U(B, Z^{\max}_i) \quad &\text{if } i \in I^U_n(\theta), \\
1 &\text{if } i \notin I^U_n(\theta).
\end{cases}
\end{equation}
so that
\begin{align*}
\frac{1}{n} \sum_{i=1}^n \frac{\gamma^\theta_{\xfarU(\theta)}(Y_i - T^\theta_{\xfarU(\theta)}(i))}{\displaystyle \sum_{x \in \Xcaltheta} \gamma^\theta_x(Y_i - T^\theta_x(i))}
	&\leq \frac{1}{n} \sum_{i \notin I^U_n(\theta)} 1
		+ \frac{1}{n} \sum_{i \in I^U_n(\theta)} h_U(B, Z^{\max}_i) \\
	&\leq \beta
		+ \frac{1}{n} \sum_{i=1}^n h_U(B, Z^{\max}_i)
\end{align*}
since $| I^U_n(\theta) | > n (1-\beta)$ by Equation~\eqref{eq_minoration_card_IU} and $h_U(b,z) \geq 0$ for all $b,z \geq 0$. The lemma follows by summing equation~\eqref{eq_proof_recentStates_densites} over $t$.
The details of the proof can be found in Section~\ref{sec_proof_conditionnementEtatsPasses}.
\end{proof}

Thus, equation~\eqref{control_i_part1} and Lemma~\ref{lemma_conditionnementEtatsPasses} imply
\begin{align*}
\limsup_{n \rightarrow \infty} \sup_{\theta \in \Tcal(\alpha,n,D) \setminus \Ucal(\beta,n,B)} \left| (i) - \frac{1}{n} \ell_n(\theta^U) \right|
	\leq C \rho^a + \frac{2 a K^2}{\sigma_-^3} \left( \beta
		+ \Ebb^*\left[ h_U(B,Z^{\max}_i) \right] \right).
\end{align*}

Now choose $a$ large enough that
\begin{align*}
C \rho^a \leq \frac{- \log(1 - \sigma_-)}{9},
\end{align*}
then $\beta$ such that
\begin{align*}
\frac{2 a K^2}{\sigma_-^3} \beta \leq \frac{- \log(1 - \sigma_-)}{9}.
\end{align*}

Finally, note that $0 \leq h_U(b,z) \leq 1$ for all $b,z \geq 0$ and that $h_U(b,z) \longrightarrow 0$ when $b \longrightarrow \infty$ for all $z$, so that by the dominated convergence theorem, there exists $B$ such that
\begin{align*}
\frac{2 a K^2}{\sigma_-^3} \Ebb^*\left[ h_U(B,Z^{\max}_i) \right] \leq \frac{- \log(1 - \sigma_-)}{9},
\end{align*}
which ensures that for all $\alpha \in (0,1)$ and $D>0$, there exists $\beta \in (0,1)$ and $B>0$ such that
\begin{align*}
\limsup_{n \rightarrow \infty} \sup_{\theta \in \Tcal(\alpha,n,D) \setminus \Ucal(\beta,n,B)} \left| (i) - \frac{1}{n} \ell_n(\theta^U) \right|
	\leq \frac{- \log(1 - \sigma_-)}{3}.
\end{align*}

This concludes the proof of the first step.

\paragraph*{Second step: controlling $(ii)$}

\begin{lemma}
\label{lemma_conditionnementEtatPresent}
Assume \textbf{(Amax)} and \textbf{(Amin)}. Then
\begin{align*}
\nonumber &\sup_{\theta \in \Tcal(\alpha,n,D) \setminus \Ucal(\beta,n,B)} (ii) \\
	&\qquad \leq \frac{1}{n}\sum_{t=1}^n \log\left(1 + \frac{ g(\{B - Z^{\max}_t\}_+) }{ \sigma_- m(Z^{\max}_t + M(\alpha,D)) } \right) \\
		&\qquad \qquad + \frac{1}{n}\sum_{t=1}^n \one_{t \notin I^U_n(\theta)} \log\left(\frac{ g(0) + \sigma_- m(Z^{\max}_t + M(\alpha,D)) }{ g(\{B - Z^{\max}_t\}_+) + \sigma_- m(Z^{\max}_t + M(\alpha,D)) } \right) \\
\nonumber &\qquad =: \frac{1}{n}\sum_{t=1}^n h_U'(B,Z^{\max}_t)
		+ \frac{1}{n}\sum_{t \notin I^U_n(\theta)} V_t^B.
\end{align*}
\end{lemma}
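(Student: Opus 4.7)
The plan is to bound the summand $\log(1 + p^\theta(Y_t \mid X_t = \xfarU(\theta))/p^\theta(Y_t \mid X_t \neq \xfarU(\theta), Y_1^{t-1}))$ pointwise by controlling numerator and denominator separately, and then split the sum according to whether $t \in I^U_n(\theta)$ or not. The small term $h_U'(B,Z^{\max}_t)$ handles all $t$ up to a correction $V_t^B$ which is only needed when $t \notin I^U_n(\theta)$, so summing and rearranging produces exactly the claimed bound.

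First I would bound the denominator uniformly from below. By Bayes and \textbf{(Aerg)}, for any $x \in \Xcaltheta$,
\begin{equation*}
p^\theta(X_t = x \mid X_t \neq \xfarU(\theta), Y_1^{t-1})
   \geq p^\theta(X_t = x \mid Y_1^{t-1}) \geq \sigma_-.
\end{equation*}
Since $\theta \in \Tcal(\alpha,n,D)$ and $B > M(\alpha,D)$, Corollary~\ref{cor_TcalUcal_tendances_proches} provides a state $x_t := x(\theta,X_t,n) \in \Xcaltheta$ with $\|T^*_{X_t} - T^\theta_{x_t}\|_{\infty,[0,n]} \leq M(\alpha,D)$; moreover, at any $t \in I^U_n(\theta)$ one has $|T^\theta_{\xfarU(\theta)}(t) - T^*_{X_t}(t)| > B > M(\alpha,D)$, so $x_t \neq \xfarU(\theta)$ (this comparison extends to all $t$ by uniformity of $M(\alpha,D)$ versus the definition of $\xfarU(\theta)$). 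Therefore, using \textbf{(Amin)} and $|Y_t - T^\theta_{x_t}(t)| \leq Z^{\max}_t + M(\alpha,D)$,
\begin{equation*}
p^\theta(Y_t \mid X_t \neq \xfarU(\theta), Y_1^{t-1})
   \geq \sigma_-\, \gamma^\theta_{x_t}(Y_t - T^\theta_{x_t}(t))
   \geq \sigma_-\, m(Z^{\max}_t + M(\alpha,D)).
\end{equation*}

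For the numerator, $p^\theta(Y_t \mid X_t = \xfarU(\theta)) = \gamma^\theta_{\xfarU(\theta)}(Y_t - T^\theta_{\xfarU(\theta)}(t))$. Applying \textbf{(Amax)}, this is at most $g(0)$ always, and at most $g(\{B - Z^{\max}_t\}_+)$ when $t \in I^U_n(\theta)$, since in that case $|Y_t - T^\theta_{\xfarU(\theta)}(t)| \geq |T^\theta_{\xfarU(\theta)}(t) - T^*_{X_t}(t)| - Z^{\max}_t \geq B - Z^{\max}_t$.

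Combining, the summand is bounded by $h_U'(B,Z^{\max}_t)$ whenever $t \in I^U_n(\theta)$, and otherwise by $\log(1 + g(0)/(\sigma_- m(Z^{\max}_t + M(\alpha,D))))$. A direct algebraic identity gives
\begin{equation*}
\log\!\left(1 + \frac{g(0)}{\sigma_- m(Z^{\max}_t + M(\alpha,D))}\right)
   = h_U'(B,Z^{\max}_t) + V_t^B,
\end{equation*}
so summing over $t$ and splitting according to the indicator $\one_{t \in I^U_n(\theta)}$ yields
\begin{equation*}
(ii) \leq \frac{1}{n}\sum_{t=1}^n h_U'(B,Z^{\max}_t)
   + \frac{1}{n}\sum_{t \notin I^U_n(\theta)} V_t^B,
\end{equation*}
uniformly over $\theta \in \Tcal(\alpha,n,D) \setminus \Ucal(\beta,n,B)$. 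The only subtle point — essentially the whole content of the proof — is checking that the auxiliary state $x_t$ furnished by Corollary~\ref{cor_TcalUcal_tendances_proches} is distinct from $\xfarU(\theta)$, which is exactly why the section is built under the standing assumption $B > M(\alpha,D)$; everything else is pointwise application of \textbf{(Amax)}, \textbf{(Amin)}, \textbf{(Aerg)} and bookkeeping.
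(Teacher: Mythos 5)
Your proof is correct and follows essentially the same route as the paper's: bound the numerator by $g(\{B - Z^{\max}_t\}_+)$ on $I^U_n(\theta)$ and by $g(0)$ elsewhere via \textbf{(Amax)}, bound the denominator below by $\sigma_- m(Z^{\max}_t + M(\alpha,D))$ via \textbf{(Aerg)}, \textbf{(Amin)} and Corollary~\ref{cor_TcalUcal_tendances_proches} (with the check that the reference state differs from $\xfarU(\theta)$ because $B > M(\alpha,D)$), then sum. The only difference is cosmetic: you make explicit the algebraic identity $\log(1 + g(0)/(\sigma_- m)) = h_U'(B,\cdot) + V^B$ that the paper leaves implicit in the final summation.
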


\begin{proof}
We show that
\begin{equation*}
\frac{p^\theta(Y_t\, | \, X_t = \xfarU(\theta))}{p^\theta(Y_t\, | \, X_t\neq \xfarU(\theta), Y_1^{t-1})}
	\leq
\begin{cases}
\displaystyle \frac{ g(\{B - Z^{\max}_t\}_+) }{ \sigma_- m(Z^{\max}_t + M(\alpha,D)) } \quad \text{if } t \in I^U_n(\theta), \\[10pt]
\displaystyle \frac{ g(0) }{ \sigma_- m(Z^{\max}_t + M(\alpha,D)) } \quad \text{if } t \notin I^U_n(\theta).
\end{cases}
\end{equation*}

The lemma follows by summing over $t$. The details of the proof can be found in Section~\ref{sec_proof_conditionnementEtatPresent}.
\end{proof}

Note that under Assumption \textbf{(Aint)},
\begin{equation*}
\Ebb^* [- \log m(Z^{\max}_t + M(\alpha,D)) ] < \infty.
\end{equation*}

Hence,
\begin{align*}
\Ebb^* | h'_U(0,Z^{\max}_t) |
	&\leq \Ebb^*\left[ \left\{\log\left(\sigma_- m\left(Z^{\max}_t + M(\alpha,D)\right) + g(0) \right) \right\}_+ \right] \\
		&\quad + \Ebb^*[- \log (\sigma_- m(Z^{\max}_t + M(\alpha,D)) )] \\
	&\leq \log 2 + |\log g(0)| + \Ebb^*[- \log m(Z^{\max}_t + M(\alpha,D))] - \log \sigma_- \\
	&< \infty.
\end{align*}

Thus, since $b \longmapsto h'_U(b,z)$ is nonincreasing and converges to zero when $b \longrightarrow \infty$ for all $z$, the dominated convergence theorem together with the law of large numbers imply that there exists $B$ such that
\begin{equation*}
\limsup_{n \rightarrow \infty} \frac{1}{n}\sum_{t=1}^n h_U'(B,Z^{\max}_t)
	\leq \frac{- \log(1 - \sigma_-)}{6}.
\end{equation*}

Then, apply Lemma \ref{lemma_top_quantile} to the i.i.d. non-negative random variables $(V_t^B)_{t\geq 1}$ using the fact that $| I^U_n(\theta) | > n(1-\beta)$, which yields
\begin{align*}
\limsup_{n \rightarrow \infty} \sup_{\theta \in \Tcal(\alpha,n,D) \setminus \Ucal(\beta,n,B)} \frac{1}{n} \sum_{t \notin I^U_n(\theta)} V_t^B
	&\leq \Ebb^*[V_1^B \one_{V_1^B \geq q_{V^B}(1-\beta)}].
\end{align*}

Note that 
\begin{align*}
\Ebb^* V_1^B \leq \log ((1 + \sigma_-) g(0)) - \log \sigma_- + \Ebb^*[ - \log m(Z^{\max}_t + M(\alpha,D))],
\end{align*}
which is finite thanks to \textbf{(Aint)}. Thus,
\begin{equation*}
\Ebb^*[V_1^B \one_{V_1^B \geq q_{V^B}(1-\beta)}] \underset{\beta \rightarrow 0}{\longrightarrow} 0,
\end{equation*}
so that there exists $\beta$ such that
\begin{align*}
\limsup_{n \rightarrow \infty} \sup_{\theta \in \Tcal(\alpha,n,D) \setminus \Ucal(\beta,n,B)} \frac{1}{n} \sum_{t \notin I^U_n(\theta)} V_t^B
	&\leq \frac{- \log(1 - \sigma_-)}{6}.
\end{align*}

Hence, we proved that there exists $\beta(\alpha,D)\in (0,1)$ and $B(\alpha,D) > 0$ such that
\begin{equation*}
\limsup_{n \rightarrow \infty} \sup_{\theta \in \Tcal(\alpha,n,D) \setminus \Ucal(\beta,n,B)} (ii)
	\leq \frac{- \log(1 - \sigma_-)}{3},
\end{equation*}
which ends the second step.

Putting together the results of the two steps, one gets that for all $\alpha \in (0,1)$ and $D > 0$, there exists $\beta \in (0,1)$ and $B>0$ such that almost surely,
\begin{align*}
\limsup_{n \rightarrow \infty} \left(
	\sup_{\theta \in \Tcal(\alpha,n,D) \setminus \Ucal(\beta,n,B)} \frac{1}{n} \ell_n(\theta)
	- \sup_{\theta \in \Theta} \frac{1}{n} \ell_n(\theta) \right)
		\leq \frac{\log(1 - \sigma_-)}{3} < 0,
\end{align*}
so that for $n$ large enough, $\hat{\theta}_n \notin (\Tcal(\alpha,n,D) \setminus \Ucal(\beta,n,B))$.

Together with Section~\ref{sec_MLE_pas_dans_Tcal}, this implies that for all $\alpha \in (0,1)$, there exists $\beta \in (0,1)$ and $D,B > 0$ such that $\hat{\theta}_n \in \Tcal(\alpha,n,D) \cap \Ucal(\beta,n,B)$ for $n$ large enough, which entails Theorem~\ref{th_localization} by equation~\eqref{eq_inclusion_ThetaOK_T_U}.

\subsection{Proofs}

\subsubsection{Proof of Lemma~\ref{lem_oubli}}\label{sec_proof_oubli}

We shall use the inequality

\begin{align*}
|\log C_\mu - \log C_\nu|\leq \frac{|C_\mu-C_\nu|}{C_\mu \wedge C_\nu}
\end{align*}
with $C_\mu = p^\theta(Y_t \mid Y_0^{t-1}, X_t \neq x, X_0 \sim \mu)$ and $C_\nu = p^\theta(Y_t \mid Y_0^{t-1}, X_t \neq x, X_0 \sim \nu)$.

\begin{align*}
|C_\mu-C_\nu|
	&= \left| \frac{p^\theta(Y_t, X_t \neq x \mid Y_0^{t-1}, X_0 \sim \mu)}{p^\theta(X_t \neq x \mid Y_0^{t-1}, X_0 \sim \mu)}
    	- \frac{p^\theta(Y_t, X_t \neq x \mid Y_0^{t-1}, X_0 \sim \nu)}{p^\theta(X_t \neq x \mid Y_0^{t-1}, X_0 \sim \nu)} \right| \\
    &=: \left| \frac{B_\mu}{A_\mu}
    	- \frac{B_\nu}{A_\nu} \right|.
\end{align*}

One has

\begin{align*}
B_\mu &= \sum_{x'\neq x}p^\theta(Y_t\mid X_t = x')p^\theta(X_t = x'\mid Y_0^{t-1},X_0\sim\mu)\\
	  &= \sum_{x'\neq x}p^\theta(Y_t\mid X_t = x')\sum_{x''\in\Xcaltheta}Q^\theta_{x''x'}p^\theta(X_{t-1} = x''\mid Y_0^{t-1},X_0\sim\mu),
\end{align*}
which yields

\begin{align*}
\sigma_-\sum_{x'\neq x}p^\theta(Y_t\mid X_t = x')\leq B_\mu\leq (1-\sigma_-)\sum_{x'\neq x}p^\theta(Y_t\mid X_t = x')
\end{align*}
and the same result holds for $B_\nu$. Besides,

\begin{align*}
A_{\mu} &= \sum_{x\neq x'}p^\theta(X_t=x'\mid Y_0^{t-1},X_0\sim\mu)\\
&=\sum_{x'\neq x}\sum_{x''\in\Xcaltheta}Q^\theta_{x''x'}p^\theta(X_{t-1} = x''\mid Y_0^{t-1},X_0\sim\mu).
\end{align*}

Hence,

\begin{align*}
\sigma_-\leq A_\mu\leq 1-\sigma_-
\end{align*}
and the same result holds for $A_\nu$. Then, letting $\phi_\mu(x') = p^\theta(X_{t-1} = x' \mid Y_0^{t-1}, X_0 \sim \mu)$, we get, using the above expressions:

\begin{align*}
|A_\mu - A_\nu|&\leq (1-\sigma_-) \| \phi_\mu - \phi_\nu\|_1\\
|B_\mu - B_{\nu}|&\leq (1-\sigma_-)\sum_{x'\neq x}p^\theta(Y_t\mid X_t = x')\| \phi_\mu - \phi_\nu\|_1.
\end{align*} 

Thus,

\begin{align*}
|C_\mu - C_\nu| &= \left| \frac{B_\mu}{A_\mu}- \frac{B_\nu}{A_\nu} \right|\\
& \leq \frac{1}{A_\mu A_\nu}\left(B_\mu|A_\mu - A_\nu| + A_{\mu}|B_\mu - B_\nu|\right)\\
& \leq \frac{2(1-\sigma_-)^2}{\sigma_-^2} \sum_{x'\neq x}p^\theta(Y_t\mid X_t = x')\| \phi_\mu - \phi_\nu\|_1.
\end{align*}

Furthermore,

\begin{align*}
\frac{1}{C_\mu \wedge C_\nu}\leq \frac{(1-\sigma_-)}{\sigma_-\sum_{x'\neq x}p^\theta(Y_t\mid X_t = x')}
\end{align*}

Finally,

\begin{align*}
|\log C_\mu - \log C_\nu|\leq \frac{2}{(1-\rho)^3 } \| \phi_\mu - \phi_\nu\|_1.
\end{align*}

It remains to prove that $\| \phi_\mu - \phi_\nu\|_1\leq\rho^{t-1}$, which follows from the geometric ergodicity of the HMM. See for instance Corollary 1 of \cite{douc2004asymptotic} or Proposition 2.1 of \cite{dCGLLC15}.

\subsubsection{Proof of Lemma~\ref{lemma_conditionnementEtatsPasses}}
\label{sec_proof_conditionnementEtatsPasses}

\paragraph*{Proof of equation~\eqref{eq_proof_recentStates_erreurBlock}}

For all $t \geq 1$ and $y_1^t \in \Rbb^t$,
\begin{align*}
p^\theta( y_t \, | & \, y_1^{t-1}, \xfarU(\theta) \notin X_1^{t-a}, X_t \neq \xfarU(\theta)) \\
	&= p^\theta(y_t \, | \, y_1^{t-1}, \xfarU(\theta) \notin X_1^t)
    		p^\theta(\xfarU(\theta) \notin X_{(t-a+1) \vee 1}^{t-1} \, | \, y_1^{t-1}, \xfarU(\theta) \notin X_1^{t-a}, X_t \neq \xfarU(\theta)) \\
        &\quad + p^\theta(y_t \, | \, y_1^{t-1}, \xfarU(\theta) \notin X_1^{t-a}, X_t \neq \xfarU(\theta), \xfarU(\theta) \in X_{(t-a+1) \vee 1}^{t-1}) \\
		&\qquad \times p^\theta(\xfarU(\theta) \in X_{(t-a+1) \vee 1}^{t-1} \, | \, y_1^{t-1}, \xfarU(\theta) \notin X_1^{t-a}, X_t \neq \xfarU(\theta)),
\end{align*}
so that
\begin{align*}
&| p^\theta(y_t \, | \, y_1^{t-1}, \xfarU(\theta) \notin X_1^{t-a}, X_t \neq \xfarU(\theta))
		- p^\theta(y_t \, | \, y_1^{t-1}, \xfarU(\theta) \notin X_1^t) | \\
    &\leq p^\theta(\xfarU(\theta) \in X_{(t-a+1) \vee 1}^{t-1} \, | \, y_1^{t-1}, \xfarU(\theta) \notin X_1^{t-a}, X_t \neq \xfarU(\theta)) \\
    &\quad \times \!\! \Big(
    	p^\theta(y_t \, | \, y_1^{t-1}, \xfarU(\theta) \notin X_1^t)
    	+ p^\theta(y_t \, | \, y_1^{t-1}, \xfarU(\theta) \notin X_1^{t-a}, X_t \neq \xfarU(\theta), \xfarU(\theta) \in X_{t-a+1}^{t-1}) \Big) \\
    &\leq 2 p^\theta(\xfarU(\theta) \in X_{(t-a+1) \vee 1}^{t-1} \, | \, y_1^{t-1}, \xfarU(\theta) \notin X_1^{t-a}, X_t \neq \xfarU(\theta)) \sum_{x \in \Xcaltheta \setminus \{\xfarU(\theta)\}} \gamma^\theta_x(y_t - T^\theta_x(t)).
\end{align*}

In addition, under \textbf{(Aerg)},
\begin{align*}
p^\theta(y_t \, | \, y_1^{t-1}, \xfarU(\theta) \notin X_1^{t-a} & , X_t \neq \xfarU(\theta)) \\
	&= \sum_{x \in \Xcaltheta \setminus \{\xfarU(\theta)\}} p^\theta(y_t | X_t = x) p^\theta(X_t = x \, | \, y_1^{t-1}, \xfarU(\theta) \notin X_1^{t-a}) \\
	&\geq \sigma_- \sum_{x \in \Xcaltheta \setminus \{\xfarU(\theta)\}} \gamma^\theta_x(y_t - T^\theta_x(t))
\end{align*}
and the same holds for $p^\theta(y_t \, | \, y_1^{t-1}, \xfarU(\theta) \notin X_1^t)$, so that using that $|\log x - \log y|\leq \frac{|x-y|}{x\wedge y}$ for all $x,y > 0$, we obtain that for all $t \geq 1$ and $y_1^t \in \Rbb^t$
\begin{multline*}
| \log p^\theta(Y_t \, | \, X_1^{t-a} \neq \xfarU(\theta), X_t \neq \xfarU(\theta), Y_1^{t-1})
		- \log p^\theta(Y_t \, | \, X_1^t \neq \xfarU(\theta), Y_1^{t-1}) | \\
    \leq \frac{2}{\sigma_-} p^\theta(\xfarU(\theta) \in X_{(t-a+1) \vee 1}^{t-1} \, | \, y_1^{t-1}, \xfarU(\theta) \notin X_1^{t-a}, X_t \neq \xfarU(\theta)).
\end{multline*}

\paragraph*{Proof of equation~\eqref{eq_proof_recentStates_densites}}

By union bound,
\begin{align*}
&p^\theta(\xfarU(\theta) \in X_{(t-a+1) \vee 1}^{t-1} \, | \, y_1^{t-1}, \xfarU(\theta) \notin X_1^{t-a}, X_t \neq \xfarU(\theta)) \\
	&\leq \sum_{i=(t-a+1) \vee 1}^{t-1} p^\theta(X_i = \xfarU(\theta) \, | \, y_1^{t-1}, \xfarU(\theta) \notin X_1^{t-a}, X_t \neq \xfarU(\theta)) \\
	&= \sum_{i=(t-a+1) \vee 1}^{t-1} \sum_{x_{i-1}, x_{i+1} \in \Xcaltheta}
    	p^\theta(X_i = \xfarU(\theta) \, | \, y_i, X_{i-1}=x_{i-1}, X_{i+1}=x_{i+1}) \\
    	&\hspace{4cm} \times p^\theta(X_{i-1}=x_{i-1}, X_{i+1}=x_{i+1} \, | \, y_1^{t-1}, \xfarU(\theta) \notin X_1^{t-a}, X_t \neq \xfarU(\theta)) \\
	&\leq \sum_{i=(t-a+1) \vee 1}^{t-1} \sum_{x_{i-1}, x_{i+1} \in \Xcaltheta}
    	p^\theta(X_i = \xfarU(\theta) \, | \, y_i, X_{i-1}=x_{i-1}, X_{i+1}=x_{i+1}) \\
	&= \sum_{i=(t-a+1) \vee 1}^{t-1} \sum_{x_{i-1}, x_{i+1} \in \Xcaltheta}
    	\frac{
        	p^\theta(X_i = \xfarU(\theta) \, | \, X_{i-1}=x_{i-1}, X_{i+1}=x_{i+1}) \gamma^\theta_{\xfarU(\theta)}(y_i - T^\theta_{\xfarU(\theta)}(i))
        }{
        	\displaystyle \sum_{x \in \Xcaltheta} p^\theta(X_i = x \, | \, X_{i-1}=x_{i-1}, X_{i+1}=x_{i+1}) \gamma^\theta_x(y_i - T^\theta_x(i))
        }.
\end{align*}

Using the Markov property and \textbf{(Aerg)}, for all $x_{i-1}, x_{i+1} \in \Xcaltheta$,
\begin{align*}
p^\theta(X_i = x \, | \, X_{i-1}=x_{i-1}, X_{i+1}=x_{i+1}) \in [\sigma_-^2, 1].
\end{align*}

Hence,
\begin{align*}
p^\theta(\xfarU(\theta) \in X_{(t-a+1) \vee 1}^{t-1} &\, | \, y_1^{t-1}, \xfarU(\theta) \notin X_1^{t-a}, X_t \neq \xfarU(\theta)) \\
	&\leq \sum_{i=(t-a+1) \vee 1}^{t-1} \sum_{x_{i-1}, x_{i+1} \in \Xcaltheta}
    	\frac{\gamma^\theta_{\xfarU(\theta)}(y_i - T^\theta_{\xfarU(\theta)}(i))}{\displaystyle \sigma_-^2 \sum_{x \in \Xcaltheta} \gamma^\theta_x(y_i - T^\theta_x(i))} \\
	&\leq \frac{K^2}{\sigma_-^2} \sum_{i=(t-a+1) \vee 1}^{t-1} \frac{\gamma^\theta_{\xfarU(\theta)}(y_i - T^\theta_{\xfarU(\theta)}(i))}{\displaystyle \sum_{x \in \Xcaltheta} \gamma^\theta_x(y_i - T^\theta_x(i))}
\end{align*}
which concludes the proof.

\paragraph*{Proof of equation~\eqref{eq_proof_recentStates_utilisationEncadrements}}

This quantity is always bounded by 1 since all terms are nonnegative. In addition, under Assumptions \textbf{(Amax)} and \textbf{(Amin)}, for all $i \in I^U_n(\theta)$,
\begin{align*}
\frac{\gamma^\theta_{\xfarU(\theta)}(Y_i - T^\theta_{\xfarU(\theta)}(i))}{\displaystyle \sum_{x \in \Xcaltheta} \gamma^\theta_x(Y_i - T^\theta_x(i))}
	&\leq \sum_{x \in [\Kast]} \one_{X_i = \xast} \left(1 \wedge \frac{\gamma^\theta_{\xfarU(\theta)}(Z_i + T^*_\xast(i) - T^\theta_{\xfarU(\theta)}(i))}{\displaystyle \sup_{x \in \Xcaltheta} \gamma^\theta_x(Z_i + T^*_\xast(i) - T^\theta_x(i))} \right) \\
	&\leq 1 \wedge\frac{\displaystyle g\left(\left\{\inf_{\xast \in [\Kast]} |T^*_\xast(i) - T^\theta_{\xfarU(\theta)}(i)| - Z^{\max}_i\right\}_+\right)}{\displaystyle m\left(Z^{\max}_i + \sup_{\xast \in [\Kast]}\inf_{x \in \Xcaltheta} |T^*_\xast(i) - T^\theta_x(i)|\right)}.
\end{align*}

Since $\theta \in \Tcal(\alpha,n,D) \setminus \Ucal(\beta,n,B)$, Corollary \ref{cor_TcalUcal_tendances_proches} ensures that $\inf_{x \in \Xcaltheta} |T^*_\xast(i) - T^\theta_x(i)| \leq M(\alpha,D)$ for all $\xast \in [\Kast]$ and $i \in \{1, \dots, n\}$. Moreover, by definition of $I^U_n(\theta)$, $\inf_{\xast \in [\Kast]} |T^*_\xast(i) - T^\theta_{\xfarU(\theta)}(i)| \geq B$ for all $i \in I^U_n(\theta)$, so that 
\begin{align*}
\frac{\gamma^\theta_{\xfarU(\theta)}(Y_i - T^\theta_{\xfarU(\theta)}(i))}{\displaystyle \sum_{x \in \Xcaltheta} \gamma^\theta_x(Y_i - T^\theta_x(i))}
	&\leq 1 \wedge \frac{g(\{B - Z^{\max}_i\}_+)}{m(Z^{\max}_i + M(\alpha,D))}
\end{align*}
for all $i \in I^U_n(\theta)$, which concludes the proof.

\subsubsection{Proof of Lemma~\ref{lemma_conditionnementEtatPresent}}
\label{sec_proof_conditionnementEtatPresent}

Under Assumption \textbf{(Amax)},

\begin{align*}
p^\theta(Y_t\, | \, X_t = \xfarU(\theta))
	&= \gamma^\theta_{\xfarU(\theta)}( Y_t - T^\theta_{\xfarU(\theta)}(t) ) \\
	&= \sum_{\xast \in [\Kast]} \one_{X_t = \xast} \gamma^\theta_{\xfarU(\theta)} \left(Z_t+ T^*_\xast(t) - T^\theta_{\xfarU(\theta)}(t)\right) \\
	&\leq \sup_{\xast \in [\Kast]} g\left(\left\{ |T^*_\xast(t) - T^\theta_{\xfarU(\theta)}(t)| - Z^{\max}_t\right\}_+ \right) \\
	&\leq g\left( \left\{ \inf_{\xast \in [\Kast]} |T^*_\xast(t) - T^\theta_{\xfarU(\theta)}(t)| - Z^{\max}_t \right\}_+ \right),
\end{align*}
hence, for all $\theta \in \Tcal(\alpha,n,D) \setminus \Ucal(\beta,n,B)$,
\begin{equation}
\label{eq_proof_conditionnementEtatPresent_majoration}
p^\theta(Y_t\, | \, X_t = \xfarU(\theta))
	\leq 
\begin{cases}
g( \{ B - Z^{\max}_t \}_+ ) \quad \text{if } t \in I^U_n(\theta), \\
g(0) \quad \text{otherwise}.
\end{cases}
\end{equation}

On the other hand, under Assumptions \textbf{(Amin)} and \textbf{(Aerg)},
\begin{align*}
p^\theta(Y_t \, | \, X_t \neq \xfarU(\theta), Y_1^{t-1})
	&= \frac{p^\theta(Y_t, X_t \neq \xfarU(\theta) \, | \, Y_1^{t-1})}{p^\theta(X_t \neq \xfarU(\theta) \, | \, Y_1^{t-1})} \\
	&\geq \sum_{x \in \Xcaltheta, x \neq \xfarU(\theta)} p^\theta(Y_t \, | \, X_t=x, Y_1^{t-1}) p^\theta(X_t=x \, | \, Y_1^{t-1}) \\
&\geq \sigma_- \sum_{x \in \Xcaltheta, x \neq \xfarU(\theta)} p^\theta(Y_t \, | \, X_t=x) \\
&= \sigma_- \sum_{x \in \Xcaltheta, x \neq \xfarU(\theta)} \gamma^\theta_x(Y_t - T^\theta_x(t)) \\
&= \sigma_- \sum_{\xast \in [\Kast]} \one_{X_t = \xast} \sum_{x \in \Xcaltheta, x \neq \xfarU(\theta)} \gamma^\theta_x(Z_t + T^*_\xast(t) - T^\theta_x(t)) \\
&\geq \sigma_- \inf_{\xast \in [\Kast]} \sup_{x \in \Xcaltheta, x \neq \xfarU(\theta)} m(Z^{\max}_t + |T^*_\xast(t) - T^\theta_x(t)|) \\
&\geq \sigma_- m\left(Z^{\max}_t + \sup_{\xast \in [\Kast]} \inf_{x \in \Xcaltheta, x \neq \xfarU(\theta)} |T^*_\xast(t) - T^\theta_x(t)| \right).
\end{align*}

Using $\theta \notin \Tcal(\alpha,n,D)$ and Corollary \ref{cor_TcalUcal_tendances_proches}, for all $\xast \in [\Kast]$,
\begin{equation*}
\inf_{x \in \Xcaltheta, x \neq \xfarU(\theta)} |T^*_\xast(t) - T^\theta_x(t)| \leq M(\alpha,D).
\end{equation*}

For example, we can choose $x = x(\theta,\xast,n)$ (defined in Definition~\ref{def_xetxstar_reference}). We know that $\xfarU(\theta) \neq x(\theta,\xast,n)$ because we chose $B > M(\alpha,D)$, so that $|T^*_\xast(i) - T^\theta_{\xfarU(\theta)}(i)| > M(\alpha,D)$ for at least one $i \in \{1, \dots, n\}$, and because $|T^*_\xast(i) - T^\theta_{x(\theta,\xast,n)}(i)| \leq M(\alpha,D)$ for all $i \in \{1, \dots, n\}$. 
Therefore, for all $\theta \in \Tcal(\alpha,n,D) \setminus \Ucal(\beta,n,B)$,
\begin{align*}
p^\theta(Y_t \, | \, X_t \neq \xfarU(\theta), Y_1^{t-1})
	\geq \sigma_- m(Z^{\max}_t + M(\alpha,D)),
\end{align*}
which concludes the proof together with equation~\eqref{eq_proof_conditionnementEtatPresent_majoration}.

\section{Integrated log-likelihood}
\label{sec_integrated_likelihood}

In this section, we use the fact that the observed process $(Y_t)_{t \geq 1}$ may be replaced by the process $(Y_t - \Tbb^*_{B_t}(t), B_t)_{t \geq 1}$. While this process is not homogeneous under the parameter $\theta$, its distribution varies slowly over time
. We take advantage of this property to show the uniform convergence of the log-likelihood by approximating $(Y_t - \Tbb^*_{B_t}(t), B_t)_{t \geq 1}$ by a process that is locally (in time) homogeneous under the parameter $\theta$.
The limit can be written as an integral of limits of log-likelihoods of homogeneous HMM, hence the name \emph{integrated log-likelihood}.

\subsection{Convergence of the log-likelihood to the integrated log-likelihood}

Assume \textbf{(Aerg)}, \textbf{(Amax)}, \textbf{(Amin)}, \textbf{(Aint)} and \textbf{(Areg)}. In this section we shall prove Theorem \ref{th_conv_int_loglik}.
Let $M > 0$.
\bigskip

The normalized log-likelihood associated with the HMM $(Y_t,B_t)_{t\geq 1}$ can be written as
\begin{align}
\nonumber
\frac{1}{n} \ell_n^{(Y,B)}(\theta)
	&= \frac{1}{n} \log \!\!\!\! \underset{\forall t, \; \bbf^\theta(x_t) = B_t}{\sum_{x_1^n \text{ s.t. }}} \!\!\!\! \pi^\theta(x_1) Q^\theta(x_1, x_2) \dots Q^\theta(x_{n-1},x_n)
	\prod_{t=1}^n \gamma^\theta_{x_t}(Y_t - T^\theta_{x_t}(t)) \\
\label{llh_Zprime_B}
	&= \frac{1}{n} \log \!\!\!\! \underset{\forall t, \; \bbf^\theta(x_t) = B_t}{\sum_{x_1^n \text{ s.t. }}} \!\!\!\! \pi^\theta(x_1) Q^\theta(x_1, x_2) \dots Q^\theta(x_{n-1},x_n)
	\prod_{t=1}^n \gamma^\theta_{x_t}\left(Z'_t - D^{\theta,n}_{x_t}\left(\frac{t}{n}\right)\right),
\end{align}
with $D^{\theta,n}_x : u \in [0,1] \mapsto T^\theta_x(nu) - \Tbb^*_{\bbf^\theta(x)}(nu)$ and $Z'_t := Y_t - \Tbb^*_{B_t}(t)$. Note that $Z'_t = Z_t + \Delta(X_t)$. Recall that $\Dcal(M)$ is defined by Equation \eqref{eq_def_Dcal1}.

Theorem~\ref{th_compacite_S_pour_un_seul_I} implies that $\Cl(\Dcal(M))$ is compact, where $\Cl(\cdot)$ denotes the closure with respect to the supremum norm topology, hence Proposition \ref{prop_compacite_Dcal} holds. Together with Arzelà–Ascoli's theorem, this entails that $\Dcal(M)$ is uniformly equicontinuous and uniformly bounded by $M$. Hence there exists a continuity modulus $\nu$ such that for all $\delta>0$ and all $(s,u) \in [0,1]^2$, 
\begin{equation}
\label{eq_equiC0}
|s-u| \leq \delta \  \Rightarrow \  \sup_{n \geq 4K(d+1)} \sup_{\theta \in \Theta^\text{OK}_n(M)} \sup_{x \in \Xcaltheta} | D^{\theta,n}_x(s) - D^{\theta,n}_x(u) | \leq \nu(\delta).
\end{equation}

\begin{definition}[Log-likelihood of the homogenized process]
For each $\eta > 0$ and $\theta \in \Theta^\text{OK}_n(M)$, let
\begin{multline*}
\frac{1}{n} \ell_n^{(Y,B)}[\eta](\theta)
	:= \frac{1}{n} \log \sum_{x_1^n \text{ s.t. } \forall t, \; \bbf^\theta(x_t) = B_t} \pi^\theta(x_1) Q^\theta(x_1, x_2) \dots Q^\theta(x_{n-1},x_n) \\
	\times \prod_{t=1}^n \gamma^\theta_{x_t}\left(Z'_t - D^{\theta,n}_{x_t}\left( \eta \left\lfloor \frac{t}{\eta n} \right\rfloor \right)\right),
\end{multline*}
be the normalized log-likelihood of the process where each residual trend is made constant over segments of length $\eta$.
\end{definition}

\begin{remark}
This quantity is indeed a log-likelihood: $\frac{1}{n} \ell_n^{(Y,B)}[\eta](\theta) = \frac{1}{n} \ell_n^{(Y,B)}(\theta[N,n])$, where the parameter $\theta[N,n]$ is defined by $\pi^{\theta[N,n]} = \pi^\theta$, $Q^{\theta[N,n]} = Q^\theta$, $\gamma^{\theta[N,n]} = \gamma^\theta$, $\bbf^{\theta[N,n]} = \bbf^\theta$ and
\begin{equation}
\label{eq_thetaNn}
\forall x \in \Xcaltheta, \quad
	T^{\theta[N,n]}_x(t) = \Tbb^*_{\bbf^\theta(x)}(t) + D^{\theta,n}_x(\lfloor N\frac{t}{n} \rfloor).
\end{equation}

However, $\theta[N,n]$ has piecewise polynomial trends instead of polynomial trends, so that it does not belong to $\Theta$.
\end{remark}

\begin{figure}
\centering
\includegraphics[scale = 0.6]{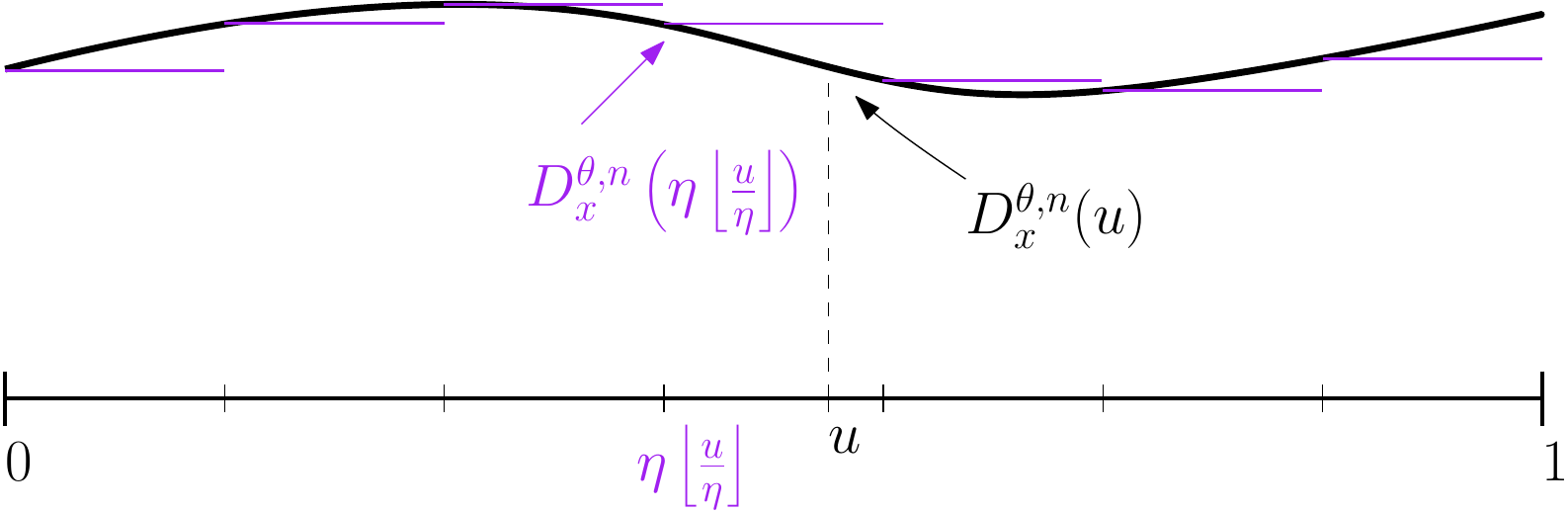}
\caption{Construction of the trends of the homogenized process.}
\label{dessin_lstat}
\end{figure}

$\frac{1}{n} \ell_n^{(Y,B)}[\eta](\theta)$ is an approximation of the log-likelihood of equation \eqref{llh_Zprime_B}. Assumption \textbf{(Areg)} together with Equation \eqref{eq_equiC0} ensure that for all $\delta > 0$, $n\geq 1$, $\theta \in \Theta_n^{\textrm{OK}}(M)$, $x \in \Xcaltheta$ and $t \in \{1,\dots,n\}$, 
\begin{equation*}
\gamma^\theta_{x_t}\left(Z'_t - D^{\theta,n}_{x_t}\left(\frac{t}{n}\right)\right)
	\in \left[ e^{- L(|Z'_t|+M) \omega(\nu(\delta))}, e^{L(|Z'_t|+M) \omega(\nu(\delta))} \right]
		\gamma^\theta_{x_t}\left(Z'_t - D^{\theta,n}_{x_t}\left( \delta \left\lfloor \frac{t}{\delta n} \right\rfloor \right)\right),
\end{equation*}
hence, for all $\delta>0$ and $n\geq 1$,
\begin{align}
\nonumber
\sup_{\theta \in \Theta^\text{OK}_n(M)}
	\left| \frac{1}{n} \ell_n^{(Y,B)}(\theta) - \frac{1}{n} \ell_n^{(Y,B)}[\delta](\theta) \right|
		&\leq \omega(\nu(\delta)) \times \frac{1}{n} \sum_{t=1}^n L(|Z'_t|+M) \\
\label{eq_stationnarisation}
		&\leq \omega(\nu(\delta)) \times \frac{1}{n} \sum_{t=1}^n L(\|\Delta\|_\infty + M + Z^{\max}_t).
\end{align}

\begin{remark}
Under \textbf{(Areg)}, the law of large numbers entails that almost surely, for all $N\geq 1$,
\begin{multline*}
\limsup_{n \rightarrow +\infty} \sup_{\theta \in \Theta^\text{OK}_n(M)}
	\left| \frac{1}{n} \ell_n^{(Y,B)}(\theta) - \frac{1}{n} \ell_n^{(Y,B)}\left[\frac{1}{N}\right](\theta) \right| \\
		\leq \omega\left(\nu\left(\frac{1}{N}\right)\right) \Ebb^*[L(\|\Delta\|_\infty + M + Z^{\max}_1)].
\end{multline*}
\end{remark}

Recall the following notation. For all $K' \in \Nbb^*$, for all $K'$-uple $\gamma = (\gamma_x)_{x \in [K']}$ of measurable functions and for all $\Dbf = (D_x)_{x \in [K']} \in \Rbb^{K'}$, let
\begin{equation*}
\tau(\gamma, \Dbf) := (z' \longmapsto \gamma_x(z' - D_x))_{x \in [K']}
\end{equation*}
the vector of functions $\gamma$ translated by the vector $\Dbf$.

\begin{definition}\label{def_lhom}
Let $\pi$ be a probability measure on $[K']$, $Q$ be a $K' \times K'$ transition matrix, $\gamma$ be a vector of $K'$ emission densities on $\Rbb$ and $\bbf$ be a function $[K'] \longrightarrow \Bcal^*$.
Let $(X_t, (\tilde{Z}_t,\tilde{B}_t))_{t \geq 1}$ be a homogeneous HMM taking values in $[K'] \times (\Rbb \times \Bcal^*)$ with parameter $(\pi, Q, (\gamma_x \otimes \one_{\bbf(x)})_{x \in [K']})$.

Denote by $\frac{1}{n} \ell_n^\text{hom}(\pi,Q,\gamma,\bbf)\{(\tilde{z},\tilde{b})_1^n\}$ (resp. $\ell^\text{hom}(Q,\gamma,\bbf)$) the normalized log-likelihood of the parameter $(\pi,Q,\gamma,\bbf)$ for the observations $(\tilde{z},\tilde{b})_1^n$ (resp. the limit of the log-likelihood, if it exists), that is
\begin{equation*}
\frac{1}{n} \ell_n^\text{hom}(\pi,Q,\gamma,\bbf)\{(\tilde{z},\tilde{b})_1^n\}
	= \frac{1}{n} \log \!\!\! \sum_{x_1^n \in [K']^n} \!\!\! \pi(x_1) Q(x_1, x_2) \dots Q(x_{n-1}, x_n) \prod_{t=1}^n \gamma_{x_t}(\tilde{z}_t) \one_{\bbf(x_t) = \tilde{b}_t}
\end{equation*}
and
\begin{equation}
\label{eq_cvg_logV_homogene}
\ell^\text{hom}(Q,\gamma,\bbf)
	= \lim_{n \rightarrow \infty} \frac{1}{n} \ell_n^\text{hom}(\pi,Q,\gamma,\bbf)\{(\tilde{Z},\tilde{B})_1^n\}.
\end{equation}
\end{definition}

The following Lemma ensures the existence of the limit of the normalized log-likelihood in Definition~\ref{def_lhom} as well as its uniform continuity with respect to the parameter. It is a consequence of a result concerning homogeneous HMM stated in \cite{douc2004asymptotic}.

\begin{lemma}
\label{lem_cvg_lstat}
Assume \textbf{(Amax)}, \textbf{(Amin)}, \textbf{(Aint)} and \textbf{(Areg)}. Let $K' \in \Nbb^*$. The following points hold.
\begin{itemize}
\item Almost surely, for all $Q \in \Sigma_{K'}^{\sigma_-}$, $\gamma \in \Gamma^{K'}$, $\Dbf \in \Rbb^{K'}$ and $\bbf : [K'] \longrightarrow \Bcal^*$, the quantity
\begin{equation*}
\ell^\text{hom}(Q^\theta, \tau(\gamma^\theta,\Dbf), \bbf)
\end{equation*}
from Equation~\eqref{eq_cvg_logV_homogene} exists and is finite almost surely under $\Pbb^*$ when $(\tilde{Z}_t,\tilde{B}_t)_t = (Z'_t,B_t)_t$.

\item For all $K' \in \Nbb^*$, the mapping
\begin{multline*}
(Q^\theta,\gamma^\theta, \Dfrak, u, \bbf) \in \Sigma_{K'}^{\sigma_-}\times \Gamma^{K'} \times \Cl(\Dcal(M))^{K'} \times [0,1] \times (\Bcal^*)^{K'} \\
	\longmapsto \ell^\text{hom}(Q^\theta, \tau(\gamma^\theta,\Dfrak(u)), \bbf)
\end{multline*}
is continuous and its domain is compact, so that it is uniformly continuous.

\item Almost surely, for all $N \in \Nbb^*$, 
\begin{multline*}
\sup_{(\pi, Q,\gamma, \Dfrak, u, \bbf)} \sup_{s \in \{0, \dots, (N-1)n\}}
    \Bigg| \frac{1}{n} \ell^\text{hom}_n(\pi, Q, \tau(\gamma,\Dfrak(u)), \bbf)\{(Z',B)_{s+1}^{s+n}\} \\
    	- \ell^\text{hom}(Q, \tau(\gamma,\Dfrak(u)), \bbf) \Bigg|
    \underset{n \rightarrow \infty}{\longrightarrow} 0
\end{multline*}
where the supremum is taken for $(\pi,Q,\gamma, \Dfrak, u, \bbf) \in \Delta_{K'} \times \Sigma_{K'}^{\sigma_-} \times \Gamma^{K'} \times \Cl(\Dcal(M))^{K'} \times [0,1] \times (\Bcal^*)^{K'}$.
\end{itemize}
\end{lemma}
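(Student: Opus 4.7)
The plan is to reduce the three claims to the classical theory of homogeneous HMM of \cite{douc2004asymptotic}, applied to the de-trended process $(Z'_t, B_t)_{t \geq 1}$. Under $\Pbb^*$, this process is a homogeneous HMM with hidden state space $[\Kast]$, transition matrix $Q^*$ satisfying \textbf{(Aerg)}, and emission densities $(z,b) \mapsto \gamma^*_{\xast}(z - \Delta(\xast)) \one(b = \bbf^*(\xast))$, so it is stationary and geometrically ergodic. A crucial consequence is that under \textbf{(Amax)}, \textbf{(Amin)} and \textbf{(Aint)}, the integrability $\Ebb^*|\log \gamma_x(Z'_1 - D_x)| < \infty$ holds uniformly for $\gamma \in \Gamma$ and $|D_x| \leq M + \|\Delta\|_\infty$, since $|\log \gamma(z - D)| \leq |\log g(0)| + |\log m(|z| + M + \|\Delta\|_\infty)|$ and $|Z'_1| \leq Z^{\max}_1 + \|\Delta\|_\infty$.

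For the first claim (existence of $\ell^\text{hom}$), this integrability together with \textbf{(Aerg)} matches the hypotheses of the existence results in \cite{douc2004asymptotic}, giving $\Pbb^*$-almost sure convergence of $\frac{1}{n} \ell_n^\text{hom}(\pi, Q, \tau(\gamma, \Dbf), \bbf)\{(Z', B)_1^n\}$ to a finite deterministic limit independent of $\pi$ (the latter coming from exponential forgetting, \cite[Lemma 1]{douc2004asymptotic}). For the second claim (continuity), for each fixed $n$ the map $(\pi, Q, \gamma, \Dbf, \bbf) \mapsto \frac{1}{n} \ell_n^\text{hom}$ is continuous by direct inspection of the sum-product formula, using the pointwise convergence topology on $\Gamma$ together with the modulus of continuity from \textbf{(Areg)}. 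Continuity passes to the limit $n \to \infty$ by dominated convergence, justified by the integrable bound on $|\log \gamma(Z'_1 - D)|$ above. Composition with the continuous evaluation map $(\Dfrak, u) \mapsto \Dfrak(u)$ on $\Cl(\Dcal(M)) \times [0,1]$, which is continuous because $\Cl(\Dcal(M))$ is equicontinuous by Proposition~\ref{prop_compacite_Dcal}, yields joint continuity in $(Q, \gamma, \Dfrak, u, \bbf)$. The domain is compact (compactness of $\Sigma_{K'}^{\sigma_-}$, $\Gamma^{K'}$, $\Cl(\Dcal(M))^{K'}$ and $[0,1]$, finiteness of $(\Bcal^*)^{K'}$), hence the map is uniformly continuous.

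The third claim (uniform convergence in both parameters and starting time $s$) is the main obstacle. The strategy is: using exponential forgetting, approximate $\frac{1}{n} \ell_n^\text{hom}(\pi, Q, \tau(\gamma, \Dfrak(u)), \bbf)\{(Z', B)_{s+1}^{s+n}\}$ by the stationary ergodic average $\frac{1}{n} \sum_{t=s+1}^{s+n} U_t^\theta$, where $U_t^\theta := \log p^\theta(Z'_t, B_t \mid (Z', B)_{-\infty}^{t-1})$ for $\theta = (Q, \tau(\gamma, \Dfrak(u)), \bbf)$, with approximation error uniformly $O(1/n)$ in $s$ thanks to the geometric forgetting rate. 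For the uniformity in $s$, apply Birkhoff's theorem to $(U_t^\theta)_t$, giving that $T_k^\theta := \sum_{t=1}^k (U_t^\theta - \Ebb^* U_1^\theta)$ satisfies $T_k^\theta / k \to 0$ almost surely. Then for any $\varepsilon > 0$ and for $n$ large enough, $|T_k^\theta| \leq \varepsilon k$ for all $k \geq n$, so that
\begin{equation*}
\sup_{s \leq (N-1) n} \frac{|T_{s+n}^\theta - T_s^\theta|}{n}
	\leq \frac{\varepsilon (s+n) + \varepsilon s}{n}
	\leq (2N - 1) \varepsilon,
\end{equation*}
giving the uniformity in $s$ at each fixed parameter.

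Finally, for uniformity in $\theta$, I would use an $\varepsilon$-net argument on the compact parameter space provided by the second claim, controlling the discrepancy between neighbouring parameters via the Lipschitz bound of \textbf{(Areg)}: the contribution of a parameter shift is bounded by $\omega(\cdot) \cdot \frac{1}{n}\sum_{t=s+1}^{s+n} L(Z^{\max}_t + \|\Delta\|_\infty + M)$, and the same Birkhoff argument applied to $L(Z^{\max}_t + \|\Delta\|_\infty + M)$ (integrable by \textbf{(Areg)}) makes this uniformly $O(1)$ in $n$ and $s$. The hardest technical point is thus ensuring that the $\varepsilon$-net covers all of $Q, \gamma, \Dfrak, u, \bbf$ simultaneously while the two uniformities (in $s$ and in $\theta$) combine cleanly; compactness, uniform continuity of $\ell^\text{hom}$, and the $\Pbb^*$-integrability of the envelopes $|\log m(Z^{\max}_1 + \cdots)|$ and $L(Z^{\max}_1 + \cdots)$ together make this step routine once set up properly.
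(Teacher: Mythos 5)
Your overall architecture is the right one and largely matches the paper's: both reduce everything to the homogeneous theory of \cite{douc2004asymptotic} applied to the de-trended stationary ergodic process $(Z'_t,B_t)_{t\geq 1}$, and your treatment of the uniformity in $s$ (telescoping $T_{s+n}-T_s$ over windows, with the $(2N-1)\varepsilon$ bound) is essentially the paper's argument, which writes the windowed log-likelihood started from the filtered law as $l_{s+n}(\pi_U)\{V_1^{s+n}\}-l_s(\pi_U)\{V_1^s\}$ up to a $1/\sigma_-^2$ error coming from the forgetting lemma. However, there are two genuine gaps. First, your justification of the continuity of the limit $\ell^\text{hom}$ --- ``continuity passes to the limit $n\to\infty$ by dominated convergence'' --- is not a valid argument: a pointwise limit of continuous functions need not be continuous, and dominated convergence controls expectations, not limits in $n$. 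To make this work you must either invoke the uniform-in-parameter convergence of $\frac{1}{n}\ell_n^\text{hom}$ (which is your third bullet, creating a circularity in the order of your claims), or pass through the representation $\ell^\text{hom}=\Ebb^*[\log p^\omega(V_1\mid V_{-\infty}^0)]$ and first prove a.s.\ continuity of $\omega\mapsto\log p^\omega(V_1\mid V_{-\infty}^0)$ via uniform exponential forgetting; the paper avoids the issue entirely by citing Proposition 2 of \cite{douc2004asymptotic}, which delivers continuity of the limit and uniform convergence simultaneously.

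Second, and more seriously, your mechanism for uniformity in the parameter --- an $\varepsilon$-net controlled by the modulus from \textbf{(Areg)} --- only works in the $(\Dfrak,u)$ directions. Assumption \textbf{(Areg)} bounds $|\log(\gamma(z+\eta)/\gamma(z))|$, i.e.\ the effect of \emph{translating the argument}; it gives no control on $|\log\gamma(z)-\log\gamma'(z)|$ for two nearby elements $\gamma,\gamma'$ of $\Gamma$, which carries only the pointwise-convergence topology, nor on perturbations of $Q$ inside $\Sigma_{K'}^{\sigma_-}$. A net-plus-Lipschitz argument therefore cannot close the supremum over $\gamma$ and $Q$. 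The standard repair is the Wald--Leroux localization used in the proof of Proposition 2 of \cite{douc2004asymptotic}: replace the log-density by $\sup_{\omega'\in B(\omega,\rho)}\log p^{\omega'}$, apply the ergodic theorem to this envelope (integrable by \textbf{(Amax)}, \textbf{(Amin)}, \textbf{(Aint)}), and let $\rho\to 0$ using monotone convergence and upper semicontinuity; this only needs pointwise continuity of $\omega\mapsto\gamma^\omega(v)$ plus compactness of $\Omega=\Sigma_{K'}^{\sigma_-}\times\Gamma^{K'}\times\Cl(\Dcal(M))^{K'}\times[0,1]\times(\Bcal^*)^{K'}$. The paper simply verifies the hypotheses of that proposition (compact metrizable $\Omega$, pointwise continuity, the bounds \textbf{(Aerg)}, \textbf{(Amax)}, and $\Ebb^*[-\log m(M+Z^{\max}_1)]<\infty$) and inherits the uniform convergence; you should do the same rather than attempt the net argument.
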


\begin{proof}
Proof in Section~\ref{sec_preuve_cvg_unif_logV}
\end{proof}

As a consequence, the family of functions 
\begin{equation*}
\bigcup_{K'=1}^K \{ u \in [0,1]
	\longmapsto \ell^\text{hom}(Q, \tau(\gamma,\Dfrak(u)), \bbf) \}_{Q\in \Sigma_{K'}^{\sigma_-}, \gamma\in\Gamma^{K'}, \Dfrak \in \Dcal(M)^{K'}, \bbf \in (\Bcal^*)^{K'}}
\end{equation*}
is uniformly equicontinuous, which ensures the following result.

\begin{corollary}[Riemann approximation of the integral]
\label{cor_approx_riemannuniforme}
The quantity
\begin{multline}
\label{eq_RN_approx_riemannuniforme}
R_N := \sup_{n \geq n_1(M)} \sup_{\theta \in \Theta^\text{OK}_n(M)}
	\Bigg| \frac{1}{N} \sum_{i=0}^{N-1} \ell^\text{hom}\left(Q^{\theta}, \tau\left(\gamma^{\theta},\Dfrak^{\theta,n}\left(\frac{i}{N}\right)\right), \bbf^\theta\right) \\
		- \int_0^1 \ell^\text{hom}\left(Q^{\theta}, \tau\left(\gamma^{\theta},\Dfrak^{\theta,n}(u)\right), \bbf^\theta\right) du
	\Bigg|
\end{multline}
satisfies
\begin{equation*}
R_N \underset{N \rightarrow +\infty}{\longrightarrow} 0.
\end{equation*}
\end{corollary}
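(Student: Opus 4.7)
The plan is to deduce the uniform convergence $R_N \to 0$ directly from the two uniform regularity properties already established: the uniform equicontinuity of the residual trends $\Dfrak^{\theta,n}$ given by Equation~\eqref{eq_equiC0}, and the uniform continuity of the mapping $(Q,\gamma, \Dfrak, u, \bbf) \mapsto \ell^\text{hom}(Q, \tau(\gamma,\Dfrak(u)), \bbf)$ on the compact domain $\Sigma_{K'}^{\sigma_-}\times \Gamma^{K'} \times \Cl(\Dcal(M))^{K'} \times [0,1] \times (\Bcal^*)^{K'}$ given by the second point of Lemma~\ref{lem_cvg_lstat}.

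First, I would let $\omega_\ell$ be a modulus of uniform continuity of $(Q,\gamma,\Dfrak,u,\bbf) \mapsto \ell^\text{hom}(Q,\tau(\gamma,\Dfrak(u)),\bbf)$, so that in particular if we fix $(Q,\gamma,\Dfrak,\bbf)$ and vary only $u$, we have
\begin{equation*}
\forall (u,u') \in [0,1]^2, \qquad
|\ell^\text{hom}(Q,\tau(\gamma,\Dfrak(u)),\bbf) - \ell^\text{hom}(Q,\tau(\gamma,\Dfrak(u')),\bbf)|
	\leq \omega_\ell(\|\Dfrak(u) - \Dfrak(u')\|_\infty).
\end{equation*}
Next, for $n \geq n_1(M)$, $\theta \in \Theta^\text{OK}_n(M)$ and $u \in [i/N, (i+1)/N]$, Equation~\eqref{eq_equiC0} applied with $\delta = 1/N$ yields
\begin{equation*}
\|\Dfrak^{\theta,n}(u) - \Dfrak^{\theta,n}(i/N)\|_\infty \leq \nu(1/N),
\end{equation*}
uniformly in $n$ and $\theta$.

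Combining both bounds, we obtain uniformly in $n \geq n_1(M)$, $\theta \in \Theta^\text{OK}_n(M)$ and $u \in [i/N, (i+1)/N]$:
\begin{equation*}
\left| \ell^\text{hom}\!\left(Q^\theta, \tau(\gamma^\theta,\Dfrak^{\theta,n}(u)), \bbf^\theta\right)
	- \ell^\text{hom}\!\left(Q^\theta, \tau(\gamma^\theta,\Dfrak^{\theta,n}(i/N)), \bbf^\theta\right) \right|
	\leq \omega_\ell(\nu(1/N)).
\end{equation*}
Integrating this bound over each subinterval $[i/N, (i+1)/N]$ and summing over $i = 0, \dots, N-1$, one recognizes that each term $\frac{1}{N} \ell^\text{hom}(Q^\theta, \tau(\gamma^\theta,\Dfrak^{\theta,n}(i/N)), \bbf^\theta)$ equals the integral of the piecewise-constant function over $[i/N,(i+1)/N]$, so that the triangle inequality gives $R_N \leq \omega_\ell(\nu(1/N))$. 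Since $\nu$ and $\omega_\ell$ are moduli of continuity (so they vanish at $0$ and are continuous there), we conclude $R_N \to 0$ as $N \to \infty$.

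The argument is essentially routine once the two equicontinuity statements are granted; no step appears to hide a difficulty. The only point requiring care is to make sure that the modulus $\omega_\ell$ applies with $\Dfrak = \Dfrak^{\theta,n}$, which is guaranteed because $\Dfrak^{\theta,n} \in \Dcal(M)^{K^\theta} \subset \Cl(\Dcal(M))^{K^\theta}$ by Equation~\eqref{eq_def_Dcal1}, placing us squarely in the compact domain on which Lemma~\ref{lem_cvg_lstat} furnishes uniform continuity.
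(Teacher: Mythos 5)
Your argument is correct and is essentially the paper's own proof: the paper deduces the corollary in one line from the uniform equicontinuity of the family $u \mapsto \ell^\text{hom}(Q, \tau(\gamma,\Dfrak(u)), \bbf)$, which is exactly what you establish by composing the modulus $\nu$ from~\eqref{eq_equiC0} with the modulus of uniform continuity furnished by the second point of Lemma~\ref{lem_cvg_lstat}. (Note you could even skip~\eqref{eq_equiC0}: the uniform continuity of Lemma~\ref{lem_cvg_lstat} on the product space already gives a modulus directly in $|u-u'| \leq 1/N$, since only the $u$-coordinate changes.)
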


The integrated log-likelihood $\ell^\text{int}$ from Definition \ref{def_int_loglik1} is continuous by uniform continuity of $\ell^\text{hom}$. We may now prove the main result of this section, that is the convergence of the normalized log-likelihood to the integrated log-likelihood.

\paragraph*{Proof of Theorem \ref{th_conv_int_loglik}}

By the triangle inequality and using Equations~\eqref{eq_RN_approx_riemannuniforme} and~\eqref{eq_stationnarisation}, for all $n \geq n_1(M)$ and $N \in \Nbb^*$,
\begin{align*}
\sup_{\theta \in \Theta^\text{OK}_n(M)} &
	\left| \frac{1}{n} \ell_n^{(Y,B)}(\theta)
	- \ell^\text{int}(Q^\theta,\gamma^\theta,\Dfrak^{\theta,n},\bbf^\theta) \right| \\
		&\leq \omega\left(\nu\left(\frac{1}{N}\right)\right) \frac{1}{n} \sum_{t=1}^n L(\|\Delta\|_\infty + M + Z^{\max}_t) + R_N \\
		&+ \sup_{\theta \in \Theta^\text{OK}_n(M)}
	\left| \frac{1}{n} \ell_n^{(Y,B)}\left[\frac{1}{N}\right](\theta)
	- \frac{1}{N} \sum_{i=0}^{N-1} \ell^\text{hom}\left(Q^{\theta}, \tau\left(\gamma^{\theta},\Dfrak^{\theta,n}\left(\frac{i}{N}\right)\right), \bbf^\theta \right) \right|.
\end{align*}

For the sake of simplicity, assume that $\frac{n}{N}$ is an integer. By Equation~\eqref{eq_thetaNn}, for all $\theta \in \Theta^\text{OK}_n(M)$, there exists $\theta[N,n]$ such that
\begin{equation*}
\frac{1}{n} \ell_n^{(Y,B)}\left[\frac{1}{N}\right](\theta) = \frac{1}{n} \ell_n^{(Y,B)}(\theta[N,n]),
\end{equation*}
so that
\begin{align*}
\frac{1}{n} \ell_n^{(Y,B)}\left[\frac{1}{N}\right](\theta)
	&= \frac{1}{n} \sum_{i=0}^{N-1} \log p^{\theta[N,n]}\left((Y,B)_{1 + i\frac{n}{N}}^{\frac{n}{N} + i\frac{n}{N}} \; | \; (Y,B)_1^{i\frac{n}{N}}\right) \\
	&= \frac{1}{N} \sum_{i=0}^{N-1} \frac{1}{\frac{n}{N}}
		\ell^\text{hom}_{\frac{n}{N}}\left(\pi^{\theta[N,n]}_{i\frac{n}{N}}, Q^\theta, \tau\left(\gamma^\theta,\Dfrak^{\theta,n}\left(\frac{i}{N}\right)\right), \bbf^\theta\right)\left\{ (Z',B)_{1 + i\frac{n}{N}}^{\frac{n}{N} + i\frac{n}{N}} \right\},
\end{align*}
where $\pi^{\theta[N,n]}_{i\frac{n}{N}}$ is defined as the distribution of $X_{1 + i\frac{n}{N}}$ conditionally to $(Y,B)_{1 + i\frac{n}{N}}^{\frac{n}{N} + i\frac{n}{N}}$ under the parameter $\theta[N,n]$.
Hence, Lemma~\ref{lem_cvg_lstat} implies that almost surely,
\begin{align*}
\limsup_{n \rightarrow +\infty} \sup_{\theta \in \Theta^\text{OK}_n(M)}
	&\left| \frac{1}{n} \ell_n^{(Y,B)}(\theta)
	- \ell^\text{int}(Q^\theta,\gamma^\theta,\Dfrak^{\theta,n},\bbf^\theta) \right| \\
	&\leq \inf_{N \in \Nbb^*} \left[ \omega\left(\nu\left(\frac{1}{N}\right)\right) \Ebb^*[L(\|\Delta\|_\infty + M + Z^{\max}_1)] + R_N \right] \\
	&= 0.
\end{align*}
The conclusion follows from Theorem~\ref{th_approx_blocs}.\bigskip

\subsection{Maximizers of the integrated log-likelihood and identifiability}

In this section we prove Proposition \ref{prop_identifiabilite_via_lint_0}. Assume \textbf{(Amax)}, \textbf{(Amin)}, \textbf{(Areg)}, \textbf{(Aid)} and \textbf{(Acentering)} and assume that $\Kast = K$ is known.

\begin{remark}
Assumptions \textbf{(Areg)}, \textbf{(Amax)} and \textbf{(Amin)} can be replaced by
\begin{equation*}
\begin{cases}
\forall \theta \in \Theta, \quad
\forall x \in [\Kast], \quad
z \in \Rbb \longmapsto \gamma^{\theta}(z) \text{ is continuous,} \\
\forall x \in [\Kast], \quad
\gamma^*_x(z) \underset{|z| \rightarrow +\infty}{\longrightarrow} 0 \\
\forall x \in [\Kast], \quad
\forall z \in \Rbb, \quad
\gamma^*_x(z) > 0.
\end{cases}
\end{equation*}
\end{remark}

The maximum of $\ell^\text{int}$ is reached at $(Q,\gamma, \Dfrak = (D_x)_{x \in [\Kast]}, \bbf)$ if and only if the integrand is maximal for almost every $u \in [0,1]$, which means under \textbf{(Aid)} that
\begin{equation*}
\left(Q, \left(\gamma(\cdot - D_x(u)) \otimes \one_{\bbf(x)}\right)_{x \in [\Kast]}\right)
	= \left(Q^*, \left(\gamma^*_x(\cdot - \Delta(x)) \otimes \one_{\bbf^*(x)}\right)_{x \in [\Kast]}\right)
\end{equation*}
up to permutation of the hidden states for all $u \in [0,1]$.

Let us assume that the permutation is not constant at $u$. Since there are only a finite number of possible permutations of $[\Kast]$, there exist two sequences $(u_i)_{i \geq 1}$ and $(v_i)_{i \geq 1}$ converging to $u$, one corresponding to a permutation $p$ and the other to a permutation $p' \neq p$, that is
\begin{equation*}
\forall i \geq 1, \quad
\forall x \in [\Kast], \quad 
\begin{cases}
\gamma_{x}(\cdot - D_x(u_i)) = \gamma^*_{p(x)}(\cdot - \Delta(p(x))) \quad \text{and} \quad \bbf(x) = \bbf^*(p(x)) \\
\gamma_{x}(\cdot - D_x(v_i)) = \gamma^*_{p'(x)}(\cdot - \Delta(p'(x))) \quad \text{and} \quad \bbf(x) = \bbf^*(p'(x))
\end{cases}
\end{equation*}

Therefore, by continuity, for all $x \in [\Kast]$ 
\begin{align*}
(\gamma^*_{p(x)}(\cdot - \Delta(p(x))), \bbf^*(p(x))) = (\gamma^*_{p'(x)}(\cdot - \Delta(p'(x))), \bbf^*(p'(x))),
\end{align*}
so that $p = p'$ according to \textbf{(Aid)}, which contradicts the assumption that the permutation is not constant in $u$. Therefore, the permutation does not depend on $u$.

One may assume without loss of generality that the permutation is the identity, in other words $Q = Q^*$, $\bbf = \bbf^*$ and
\begin{equation*}
\forall u \in [0,1], \quad
\forall x \in [\Kast], \quad 
\gamma_{x}(\cdot - D_x(u)) = \gamma^*_x(\cdot - \Delta(x)).
\end{equation*}

Here, we took $u$ in the whole segment $[0,1]$ instead of a subset with measure 1 because the mapping $u \in [0,1] \longmapsto \gamma_{x}(\cdot - D_x(u))$ is continuous under \textbf{(Areg)}. If $D_x$ is not constant at some $x \in [\Kast]$, this entails that $\gamma^*_x$ is invariant by translation, so that it is constant, which contradicts \textbf{(Amax)}. Therefore, $\Dfrak$ is constant.

Finally,
\begin{equation*}
\forall x \in [\Kast], \quad 
\frac{1}{2} = \int_{z \leq D_x} \gamma_{x}(z - D_x) dz = \int_{z \leq D_x} \gamma^*_x(z - \Delta(x)) dz
\end{equation*}
using \textbf{(Acentering)}, so that $D_x$ is a median of $\gamma^*_x$. To conclude, note that under \textbf{(Amin)} and \textbf{(Acentering)}, $\Delta(x)$ is the only median of $\gamma^*_x$.

\subsection{Uniform convergence of the homogeneous log-likelihood}
\label{sec_preuve_cvg_unif_logV}

Let us prove Lemma~\ref{lem_cvg_lstat}.
The following theorem is a reformulation of Proposition 2 of~\cite{douc2004asymptotic}. Note that their proof also works when the space of parameters is not parametric.

\begin{theorem}
Let $\Vcal$ be a Polish space and write $\Dcal(\Vcal)$ the set of nonnegative functions of $\Vcal$.
Let $K' \in \Nbb^*$. Let $\Qcal_{K'}$ be the set of transition matrices of size $K'$ and $\Delta_{K'}$ the set of probability measures on $[K']$. Let $(V_t)_{t \geq 1}$ be an ergodic and stationary process taking values in $\Vcal$ with distribution $\Pbb^*$.

Consider a compact metric space $\Omega$ and mappings $\omega \longmapsto Q^\omega \in \Qcal_{K'}$ and $\omega \longmapsto \gamma^\omega \in \Dcal(\Vcal)^{K'}$.
Assume that $\omega \longmapsto Q^\omega$ is continuous and for all $v \in \Vcal$, the mapping $\omega \longmapsto \gamma^\omega(v) \in \Rbb_+^{K'}$ is continuous. Finally, assume that there exists a constant $\sigma_- > 0$ such that
\begin{gather}
\label{eq_douc_minorationQ}
\inf_{\omega \in \Omega} \inf_{x,x' \in [K']} Q^\omega(x,x') \geq \sigma_-, \\ 
\label{eq_douc_majorationgamma}
\sup_{\omega \in \Omega} \sup_{x \in [K']} \sup_{v \in \Vcal} \gamma^\omega_x(v) < \infty, \\ 
\label{eq_douc_emdensitesintegrables}
\Ebb^*\left[ \sup_{\omega \in \Omega} \Big(\log \sum_{x \in [K']} \gamma^\omega_x(V_1)\Big)_- \right] < \infty.
\end{gather}

For all $\pi \in \Delta_{K'}$, $\omega \in \Omega$ and $v_1^n \in \Vcal^n$, let
\begin{equation*}
\frac{1}{n} l_n(\pi, Q^\omega, \gamma^\omega)\{v_1^n\}
	:= \frac{1}{n} \log \sum_{x_1^n \in [K']^n} \pi(x_1) Q^\omega(x_1, x_2) \dots Q^\omega(x_{n-1}, x_n) \prod_{t=1}^n \gamma^\omega_{x_t}(v_t)
\end{equation*}
be the log-likelihood corresponding to the HMM with parameters $(\pi, Q^\omega, \gamma^\omega)$ and to the observations $v_1^n$.

Then for all $\pi \in \Delta_{K'}$ and $\omega \in \Omega$, there exists a finite $l(Q^\omega, \gamma^\omega)$ such that almost surely,
\begin{equation*}
\frac{1}{n} l_n(\pi, Q^\omega, \gamma^\omega)\{V_1^n\}
	\underset{n \rightarrow \infty}{\longrightarrow} l(Q^\omega, \gamma^\omega).
\end{equation*}

In addition, for all $N \in \Nbb^*$, the mapping $\omega \longmapsto l(Q^\omega, \gamma^\omega)$ is continuous and
\begin{equation*}
\sup_{\omega \in \Omega} \sup_{\pi \in \Delta_{K'}}
	\left| \frac{1}{n} l_n(\pi, Q^\omega, \gamma^\omega)\{V_1^n \}
    	- l(Q^\omega, \gamma^\omega) \right|
    \underset{n \rightarrow \infty}{\longrightarrow} 0
\end{equation*}
almost surely.
\end{theorem}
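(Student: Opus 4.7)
The plan is to adapt the proof of Proposition 2 of~\cite{douc2004asymptotic} to the non-parametric setting. Their argument relies only on the exponential forgetting granted by~\eqref{eq_douc_minorationQ}, on the stationarity and ergodicity of $(V_t)$, and on the envelope conditions~\eqref{eq_douc_majorationgamma}--\eqref{eq_douc_emdensitesintegrables}; it never uses the vector-space structure of the parameter space. The key steps are (i) reduction to a two-sided stationary process via a standard stationary extension, (ii) pointwise convergence of the log-likelihood for each fixed $\omega$, and (iii) upgrade to uniform convergence and continuity of the limit via compactness of $\Omega$.

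For step (ii), decompose
\begin{equation*}
\frac{1}{n} l_n(\pi, Q^\omega, \gamma^\omega)\{V_1^n\}
    = \frac{1}{n} \sum_{t=1}^n \log p^\omega(V_t \mid V_1^{t-1}, X_1 \sim \pi).
\end{equation*}
The geometric forgetting implied by~\eqref{eq_douc_minorationQ} (with rate $\rho = 1 - \sigma_-/(1-\sigma_-) < 1$) yields
\begin{equation*}
\bigl| \log p^\omega(V_t \mid V_{-m}^{t-1}, X_{-m} \sim \pi)
- \log p^\omega(V_t \mid V_{-m'}^{t-1}, X_{-m'} \sim \pi') \bigr|
\leq C \rho^{t + m \wedge m'}
\end{equation*}
uniformly in $\pi, \pi'$, $\omega$ and $V$. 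This Cauchy property ensures that $\log p^\omega(V_t \mid V_{-m}^{t-1}, X_{-m} \sim \pi)$ converges a.s.\ as $m \to \infty$ to a limit $\Phi^\omega_t$ measurable with respect to $\sigma(V_{-\infty}^t)$ and independent of $\pi$. By stationarity of the two-sided $(V_t)$, the sequence $(\Phi^\omega_t)$ is stationary, and ergodicity combined with Birkhoff's theorem gives $\frac{1}{n} l_n \to \Ebb^*[\Phi^\omega_0] =: l(Q^\omega, \gamma^\omega)$ almost surely. Finiteness follows since the positive part of $\Phi^\omega_0$ is bounded via~\eqref{eq_douc_majorationgamma} and the negative part is integrable by~\eqref{eq_douc_emdensitesintegrables}.

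For step (iii), the uniform convergence and continuity of $\omega \mapsto l(Q^\omega, \gamma^\omega)$ are obtained together by an equicontinuity argument. Given $\varepsilon > 0$, one shows that for every $\omega_0 \in \Omega$ there is a neighborhood $U$ such that, almost surely,
\begin{equation*}
\sup_{\omega \in U} \sup_{n} \sup_{\pi}
\left| \frac{1}{n} l_n(\pi, Q^\omega, \gamma^\omega)\{V_1^n\} - \frac{1}{n} l_n(\pi, Q^{\omega_0}, \gamma^{\omega_0})\{V_1^n\} \right| \leq \varepsilon,
\end{equation*}
by controlling the one-step conditional densities through the continuity of $\omega \mapsto Q^\omega$ and $\omega \mapsto \gamma^\omega(v)$ combined with a dominated-convergence argument based on~\eqref{eq_douc_emdensitesintegrables}. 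Compactness of $\Omega$ then provides a finite subcover, reducing uniform convergence to pointwise convergence at finitely many centers (already established in step (ii)). The continuity of $\omega \mapsto l(Q^\omega, \gamma^\omega)$ follows since it is a uniform limit of continuous functions, and uniform continuity on the compact $\Omega$ is then automatic. The main obstacle is precisely this equicontinuity estimate uniformly in $n$ and $\pi$: perturbations of $Q^\omega$ and $\gamma^\omega$ must be propagated through the conditional densities $p^\omega(V_t \mid V_1^{t-1}, X_1 \sim \pi)$ without constants blowing up with $n$ or $t$, and this is precisely where exponential forgetting is essential for the bound to hold uniformly.
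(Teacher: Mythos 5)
Your proposal follows essentially the same route as the paper, which in fact gives no self-contained proof here: it states the result as a reformulation of Proposition 2 of \cite{douc2004asymptotic} and simply observes that their argument (two-sided stationary extension, exponential forgetting of the initial condition, Birkhoff's theorem, then uniformity via compactness of the parameter set) never uses the parametric structure. One caveat on your step (iii): the displayed equicontinuity estimate, with $\sup_n$ inside an almost-sure statement over a single fixed neighborhood $U$ of $\omega_0$, is stronger than the hypotheses deliver --- pointwise continuity of $\omega \mapsto \gamma^\omega(v)$ gives a neighborhood depending on $v$, and no fixed $U$ can be expected to control the whole trajectory $(V_t)_{t \geq 1}$ and every $n$ simultaneously. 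The standard repair, which is what Douc et al.\ actually do, is to apply the ergodic theorem to the local suprema $\sup_{\omega' \in B(\omega_0,\delta)} |\Phi^{\omega'}_0 - \Phi^{\omega_0}_0|$ of your limiting conditional log-density, obtaining a $\limsup_n$ bound by an expectation that vanishes as $\delta \to 0$ by dominated convergence; this yields the same finite-subcover conclusion without changing your architecture.
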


Let us check these assumptions. First, let $\Vcal = \Rbb \times \Bcal^*$, $V_t = (Z'_t,B_t)$ and
\begin{equation*}
\Omega = \left\{ \omega = (Q,\gamma,\Dfrak,u,\bbf) \in \Sigma_{K'}^{\sigma_-} \times \Gamma^{K'} \times \text{Cl}(\Dcal(M))^{K'} \times [0,1] \times (\Bcal^*)^{K'} \right\}.
\end{equation*}

By Proposition~\ref{prop_compacite_Dcal}, $\Omega$ is compact. It is also metrizable under~\textbf{(Areg)}: for instance, let $(x_i)_{i \geq 1}$ be a dense sequence in $\Rbb$, endow $\Gamma$ with the distance
\begin{equation*}
d_\Gamma(\gamma, \gamma') = \sum_{i \geq 1} 2^{-i} ( |\gamma(x_i) - \gamma'(x_i)| \wedge 1 )
\end{equation*}
and thus $\Omega$ is metrizable as a product of metric spaces.

By the uniform continuity of $\text{Cl}(\Dcal(M))$, the mappings 
\begin{equation*}
\begin{cases}
\omega = (Q,\gamma,\Dfrak,u,\bbf) \in \Omega \longmapsto Q^\omega := Q \\
\omega = (Q,\gamma,\Dfrak,u,\bbf) \in \Omega \longmapsto \gamma^\omega(z',b) := (\gamma_{x}(z' - D_x(u)) \one_{\bbf(x)}(b))_{x \in [K']}
\end{cases}
\end{equation*}
are continuous for all $(z',b) \in \Rbb \times \Bcal^*$. The lower bound~\eqref{eq_douc_minorationQ} on the transition matrices is ensured by \textbf{(Aerg)} and the upper bound~\eqref{eq_douc_majorationgamma} on the densities is implied by \textbf{(Amax)}. Finally, the integrability condition~\eqref{eq_douc_emdensitesintegrables} follows from the fact that for all $\omega = (Q,\gamma,\Dfrak = (D_x)_{x \in [K']},u,\bbf) \in \Omega$,
\begin{equation*}
\sum_{x \in [K']} \gamma_x^\omega(Z'_1,B_1)
	\geq \inf_{x \in [K']} \gamma_x(Z'_1 - D_x(u))
    \geq m(M + Z^{\max}_1)
\end{equation*}
by \textbf{(Amin)}, and $\Ebb^*[ - \log m(M + Z^{\max}_1) ] < \infty$ by \textbf{(Aint)}.

Thus, the previous theorem holds, which shows that the application
\begin{equation*}
\omega \longmapsto l(Q^\omega, \gamma^\omega) =: l^\text{hom}(Q, \tau(\gamma, \Dfrak(u)), \bbf)
\end{equation*}
is continuous on $\Omega$. For the uniform convergence, let $\pi_U$ be the uniform distribution on $[K']$ and let
\begin{equation*}
S_{s,n}(\omega) = \frac{1}{n} l_n(\pi_U, Q^\omega, \gamma^\omega)\{V_{s+1}^{s+n}\}
\end{equation*}
for all $s,n \in \Nbb^*$ and $\omega \in \Omega$.

The theorem implies that almost surely,
\begin{equation*}
\lim_{n \rightarrow \infty} \sup_{\omega \in \Omega} \left| \frac{1}{n} S_{0,n}(\omega) - l(Q^\omega,\gamma^\omega) \right| = 0.
\end{equation*}

Hence, for all $\epsilon > 0$, there exists a (random) $n(\epsilon)$ such that,
\begin{equation}
\label{eq_approx_logV_par_S}
\forall n \geq n(\epsilon), \quad 
\sup_{\omega \in \Omega} \left| \frac{1}{n} S_{0,n}(\omega) - l(Q^\omega,\gamma^\omega) \right| \leq \epsilon.
\end{equation}

The following Lemma is a reformulation of Lemma 2 of~\cite{douc2004asymptotic} for compact nonparametric parameter spaces.

\begin{lemma}
\label{lem_oubli_initial_logV}
Under the same assumptions as the previous theorem, for all $v_1^n \in \Vcal^n$,
\begin{equation*}
\sup_{\omega \in \Omega} \sup_{\pi \in \Delta_K}
	|l_n(\pi,Q^\omega, \gamma^\omega)\{v_1^n\}
    	- l_n(\pi_U,Q^\omega, \gamma^\omega)\{v_1^n\}|
    \leq \frac{1}{\sigma_-^2}.
\end{equation*}
\end{lemma}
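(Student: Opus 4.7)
The plan is to exploit the Doeblin-type minorization $Q^\omega(x,y) \geq \sigma_-$ from (Aerg) to factor out the initial-distribution dependence of the likelihood. I would start from the exact decomposition
\begin{equation*}
p^{\pi,\omega}(v_1^n)
	= \sum_{x_1} \pi(x_1)\, \gamma^\omega_{x_1}(v_1)\, G^\omega(x_1),
	\qquad G^\omega(x_1) := \sum_{x_2^n}\prod_{t=2}^n Q^\omega(x_{t-1},x_t)\,\gamma^\omega_{x_t}(v_t),
\end{equation*}
and observe that the bounds $\sigma_- \leq Q^\omega(x_{t-1},x_t) \leq 1$, applied to the first transition factor $Q^\omega(x_1,x_2)$ in the defining sum, immediately yield $G^\omega(x_1) \in [\sigma_-\bar G^\omega,\, \bar G^\omega]$ for some $\bar G^\omega > 0$ independent of $x_1$. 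Writing $A_\pi := \sum_{x_1}\pi(x_1)\gamma^\omega_{x_1}(v_1)$, this gives $p^{\pi,\omega}(v_1^n) \in [\sigma_- \bar G^\omega A_\pi,\, \bar G^\omega A_\pi]$, and the same sandwich for $\pi_U$.

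For the upper-bound direction the argument is then short: (Aerg) forces $K'\sigma_- \leq 1$, hence $\pi_U(x) = 1/K' \geq \sigma_-$, and so
\begin{equation*}
A_{\pi_U} = \frac{1}{K'}\sum_x \gamma^\omega_x(v_1) \geq \sigma_- \max_x \gamma^\omega_x(v_1) \geq \sigma_-\, A_\pi.
\end{equation*}
Combining this with the two-sided bound on $G^\omega$ yields $p^{\pi,\omega}(v_1^n)/p^{\pi_U,\omega}(v_1^n) \leq 1/\sigma_-^2$, and hence $l_n(\pi) - l_n(\pi_U) \leq 2\log(1/\sigma_-) \leq 1/\sigma_-^2$, uniformly in $\pi$, $\omega$ and $v_1^n$, as claimed.

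The main obstacle is the reverse direction. A $\pi$ concentrated on a state $x_1^\ast$ with anomalously small $\gamma^\omega_{x_1^\ast}(v_1)$ makes $A_\pi$ arbitrarily small, so the naive pointwise lower bound on $p^{\pi,\omega}(v_1^n)/p^{\pi_U,\omega}(v_1^n)$ fails. The remedy is the classical argument in Lemma~2 of \cite{douc2004asymptotic}: one re-parameterizes the likelihood by conditioning on $X_2$ (rather than $X_1$), writing $p^{\pi,\omega}(v_1^n) = \sum_{x_2}\widetilde A_\pi(x_2)\,\widetilde G^\omega(x_2)$ with $\widetilde A_\pi(x_2) = \sum_{x_1}\pi(x_1)\gamma^\omega_{x_1}(v_1)Q^\omega(x_1,x_2)$. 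The minorization of $Q^\omega$ then gives $\widetilde A_\pi(x_2) \geq \sigma_- A_\pi$ and $\widetilde A_\pi(x_2)/\widetilde A_{\pi_U}(x_2) \leq K' \leq 1/\sigma_-$ regardless of how concentrated $\pi$ is, which produces the matching lower bound. Extending the scalar bound thus obtained to a supremum over the compact nonparametric set $\Omega$ is routine, using the continuity of $\omega \mapsto Q^\omega$ and of $\omega \mapsto \gamma^\omega(v)$ granted by the assumptions of the previous theorem.
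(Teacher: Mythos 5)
Your first direction is fine, but the ``matching lower bound'' is not actually established, and that is where the real content of the lemma lies. The two inequalities you extract from the $X_2$-reparameterization, namely $\widetilde A_\pi(x_2)\geq\sigma_- A_\pi$ and $\widetilde A_\pi(x_2)\leq K'\,\widetilde A_{\pi_U}(x_2)$, both bound $p^{\pi,\omega}(v_1^n)$ \emph{from above} in terms of $p^{\pi_U,\omega}(v_1^n)$ (the second gives $p^{\pi,\omega}\leq K'\,p^{\pi_U,\omega}$, i.e.\ again $l_n(\pi)-l_n(\pi_U)\leq\log K'$). What the reverse direction requires is $\widetilde A_{\pi_U}(x_2)\leq C\,\widetilde A_\pi(x_2)$, and this fails for exactly the reason you yourself identified: the factor $\gamma^\omega_{x_1}(v_1)$ stays glued to $\pi(x_1)$ inside $\widetilde A_\pi(x_2)=\sum_{x_1}\pi(x_1)\gamma^\omega_{x_1}(v_1)Q^\omega(x_1,x_2)$, so a $\pi$ concentrated on a state with tiny (or vanishing) first emission weight still makes $\widetilde A_\pi(x_2)$ arbitrarily small compared with $\widetilde A_{\pi_U}(x_2)$. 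Conditioning on $X_2$ mixes the hidden state but does not detach the first emission from the initial law. In fact, with the emission densities actually used here, $\gamma^\omega_x(z',b)=\gamma_x(z'-D_x(u))\one_{\bbf(x)}(b)$, taking $\pi=\delta_{x^\ast}$ with $\bbf(x^\ast)\neq b_1$ gives $l_n(\pi,Q^\omega,\gamma^\omega)\{v_1^n\}=-\infty$ while $l_n(\pi_U,\cdot)$ stays finite, so no uniform two-sided bound over all of $\Delta_{K'}$ can hold with the convention in which $\pi$ is the law of $X_1$ and $X_1$ emits $v_1$.

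The resolution, which is what Lemma~2 of \cite{douc2004asymptotic} (the source the paper invokes instead of giving a proof) really uses, is that the initial law there is the law of $X_0$, which emits no observation: the likelihood reads $\sum_{x_0^n}\pi(x_0)\prod_{t=1}^nQ^\omega(x_{t-1},x_t)\gamma^\omega_{x_t}(v_t)$, the coefficient of the block indexed by $x_1$ is $\sum_{x_0}\pi(x_0)Q^\omega(x_0,x_1)\in[\sigma_-,1]$ for \emph{every} $\pi$, and the two-sided bound $|\log p^{\pi}-\log p^{\pi'}|\leq\log(1/\sigma_-)\leq 1/\sigma_-^2$ drops out in one line. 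Equivalently, keep your convention but restrict the supremum to initial laws with $\min_x\pi(x)\geq\sigma_-$ --- which is all the paper ever needs, since $\pi_U(x)=1/K'\geq\sigma_-$ and every prediction distribution $p^\theta(X_{s+1}=\cdot\mid V_1^s)$ is bounded below by $\sigma_-$ under \textbf{(Aerg)}. Under that restriction your own first sandwich already closes the argument in both directions: $A_\pi\geq\sigma_-\sum_x\gamma^\omega_x(v_1)=\sigma_- K' A_{\pi_U}$ and $A_\pi\leq K'A_{\pi_U}$, so $A_\pi/A_{\pi_U}\in[\sigma_- K',K']\subset[\sigma_-,1/\sigma_-]$, whence $|l_n(\pi)-l_n(\pi_U)|\leq 2\log(1/\sigma_-)\leq 1/\sigma_-^2$. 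You need to state one of these two fixes explicitly; as written, the reverse inequality does not go through.
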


Therefore,
\begin{align*}
| n S_{s,n}(\omega) - l_n(\pi_{X_{s+1} | V_1^s, X_1 \sim \pi_U},Q^\omega,\gamma^\omega)\{V_{s+1}^{s+n}\} | \leq \frac{1}{\sigma_-^2}.
\end{align*}

Note that
\begin{align*}
l_n(\pi_{X_{s+1} | V_1^s, X_1 \sim \pi_U},Q^\omega,\gamma^\omega)\{V_{s+1}^{s+n}\}
	&= l_{s+n}(\pi_U,Q^\omega,\gamma^\omega)\{V_{1}^{s+n}\} - l_s(\pi_U,Q^\omega,\gamma^\omega)\{V_{1}^{s}\} \\
    &= (s+n) S_{0,s+n}(\omega) - s S_{0,s}(\omega),
\end{align*}
so that
\begin{align*}
| n S_{s,n}(\omega) - (s+n) S_{0,s+n}(\omega) - s S_{0,s}(\omega) | \leq \frac{1}{\sigma_-^2}.
\end{align*}

Thus, equation~\eqref{eq_approx_logV_par_S} entails that for all $s \geq 1$, $n \geq n(\epsilon)$ and $\omega \in \Omega$,
\begin{align*}
| n S_{s,n}(\omega) - n l(Q^\omega,\gamma^\omega) |
	\leq (2s+n) \epsilon + \frac{1}{\sigma_-^2}.
\end{align*}

Therefore, by Lemma~\ref{lem_oubli_initial_logV}, for all $n \geq n(\epsilon)$ and $s \in \{0, \dots, (N-1)n \}$:
\begin{align*}
\sup_{\omega \in \Omega}
\sup_{\pi \in \Delta_{K'}}
	\left| \frac{1}{n} l_n(\pi,Q^\omega,\gamma^\omega)\{V_{s+1}^{s+n}\} - l(Q^\omega,\gamma^\omega) \right|
	\leq (2N-1) \epsilon + \frac{2}{n \sigma_-^2},
\end{align*}
which concludes the proof.

\section{Miscellaneous proofs}

\subsection{Proof of Lemma~\ref{lem_hoeffding}}
\label{sec_proof_infSegments}

Let us first state a Hoeffding inequality for uniformly ergodic Markov chains using \textbf{(Aerg)} (see e.g. \cite{GO02MChoeffding}): for all $\epsilon > 0$, $x_1 \in [\Kast]$ and $n \geq \frac{1}{2 \epsilon \sigma_-}$,
\begin{equation*}
\Pbb\left( \Pbb(X_1 = \xast) - \frac{1}{n} \sum_{t=1}^n \one_{X_t = \xast} \geq \epsilon \; \Big| \; X_1 = x_1 \right) \leq \exp\left( - \frac{\sigma_-^2 \epsilon^2}{2} n \right).
\end{equation*}

The value of $\Pbb(X_1 = \xast)$ in the inequality is the one corresponding to the stationary distribution, so it is bounded below by $\sigma_-$ using \textbf{(Aerg)}. Thus, for all $\delta > 0$, $\epsilon > 0$, $n \geq \frac{1}{\delta \epsilon \sigma_-}$ and $x_1 \in [\Kast]$,
\begin{equation*}
\Pbb\left( \frac{2}{\delta n} \sum_{t=1}^{\delta n / 2} \one_{X_t = \xast} \leq \sigma_- - \epsilon \; \Big| \; X_1 = x_1 \right) \leq \exp\left( - \frac{\sigma_-^2 \epsilon^2 \delta}{4} n \right).
\end{equation*}

Assume $n \geq \frac{2}{\delta (\sigma_-)^2}$. Choose $\epsilon = \sigma_- / 2$ and apply a union bound on a covering $\Rcal$ of $\{1, \dots, n\}$ in at most $2n / \delta$ segments of size $\delta n/2$:
\begin{equation*}
\Pbb\left( \inf_{S \in \Rcal} \frac{1}{n} \sum_{t \in S} \one_{X_t = \xast} \leq \frac{\delta \sigma_-}{4} \right) \leq \frac{2 n}{\delta} \exp\left( - \frac{\sigma_-^4 \delta}{16} n \right).
\end{equation*}

Borel-Cantelli's lemma yields the result.

\subsection{Proof of Lemma~\ref{lemma_top_quantile}}
\label{sec_proof_cvgFractionSomme}

Without loss of generality, we assume $\delta_n \longrightarrow \delta$ almost surely (this is possible by replacing $\delta_n$ by $\delta_n \wedge \delta$ in the first statement and $\delta_n \vee \delta$ in the second).

Let us first show that the two statements are equivalent. Assume that the second one holds. Let $(U_t)_{t \geq 1}$ and $(\delta_n)_n$ as in the first statement. Apply the second one to the i.i.d sequence of non-negative integrable random variables $(-U_t)_{t \geq 1}$:
\begin{align*}
\limsup_{n \rightarrow \infty} 
	\underset{|S| \leq \delta_n n}{\sup_{S \subset \{1, \dots, n\}}}
		\frac{1}{n} \sum_{t \in S} (-U_t)
	&\leq \Ebb[(-U_1) \one_{-U_1 \geq -q_U(\delta)}] \\
	&\leq - \Ebb[U_1 \one_{U_1 \leq q_U(\delta)}].
\end{align*}

Add $\Ebb[U_1]$ on each side and use the law of large numbers:
\begin{align*}
\limsup_{n \rightarrow \infty} 
	\underset{|S| \leq \delta_n n}{\sup_{S \subset \{1, \dots, n\}}}
		\frac{1}{n} \sum_{t \notin S} U_t
	&\leq \Ebb[U_1 \one_{U_1 > q_U(\delta)}].
\end{align*}

Finally, replace $\delta_n$ by $1 - \delta_n$, $\delta$ by $1 - \delta$ and the sets $S$ by their complementary to obtain the first statement.
\bigskip

Let us now show the second statement (regarding non-negative random variables). Given a random vector $(V_1, \dots, V_n)$, we write $V_{(1)} \leq V_{(2)} \leq \dots \leq V_{(n)}$ its order statistics. Let $\delta \in (0,1)$ and let $(\delta_n)_n$ be a non-increasing sequence of $[0,1]$-valued random variables whose limit is $\delta$ almost surely.

For all $\beta \in (0,1)$, write $\hat{q}_V(\beta) := V_{(\lfloor \beta n \rfloor)}$ the empirical $\beta$-quantile. Then
\begin{equation*}
\underset{|S| \leq \delta_n n}{\sup_{S \subset \{1, \dots, n\}}}
	\frac{1}{n} \sum_{t \in S} V_t
	= \frac{1}{n} \sum_{t=1}^n V_t \one_{V_t \geq \hat{q}_V(1 - \delta_n)}.
\end{equation*}

Let us show that almost surely
\begin{equation*}
\left| \frac{1}{n} \sum_{t=1}^n V_t \one_{V_t \geq \hat{q}_V(1 - \delta_n)} - \frac{1}{n} \sum_{t=1}^n V_t \one_{V_t \geq q_V(1 - \delta)}\right| \underset{n\to\infty} {\longrightarrow} 0
\end{equation*}
and the result will follow by the law of large numbers. Thus we have to show that
\begin{equation*}
\frac{1}{n} \sum_{t=1}^n V_t \left(\one_{q_V(1 - \delta) \leq V_t \leq \hat{q}_V(1 - \delta_n)} + \one_{\hat{q}_V(1 - \delta_n) \leq V_t \leq q_V(1 - \delta)} \right)
\end{equation*}
goes to $0$ almost surely. Using Hoeffding's inequality, for all $\beta \in (0,1)$,
\begin{equation*}
\Pbb\left(\left|\frac{|\{t \in \{1, \dots, n\} \text{ s.t. } V_t \geq q_V(\beta)\}|}{n} - \Pbb(V_1 \geq q_V(\beta))\right| \geq \sqrt{\frac{\log n}{n}} \right) \leq 2 n^{-2}.
\end{equation*}

In particular, taking $\beta = 1-\delta$, Borel-Cantelli's lemma shows that almost surely, for large enough $n$,
\begin{equation*}
\hat{q}_V\left(1 - \delta - \sqrt{\frac{\log n}{n}}\right)
	\leq q_V(1 - \delta)
	\leq \hat{q}_V\left(1 - \delta + \sqrt{\frac{\log n}{n}}\right),
\end{equation*}
so that there are at most $\sqrt{n \log n}$ terms between $q_V(1 - \delta)$ and $\hat{q}_V(1 - \delta)$. Hence there are at most $\sqrt{n \log n} + (\delta_n - \delta) n$ terms between $q_V(1 - \delta)$ and $\hat{q}_V(1 - \delta_n)$. As $(\delta_n)_n$ is non-increasing, this yields
\begin{align*}
\hat{q}_V(1 - \delta_n)
	\leq \hat{q}_V(1 - \delta)
	\leq q_V\left(1 - \delta + \sqrt{\frac{\log n}{n}}\right),
\end{align*}
which ensures that these terms are bounded above by $q_V(1 - \delta + \sqrt{\frac{\log n}{n}})$. Thus, almost surely, for $n$ large enough,
\begin{align*}
\frac{1}{n} \sum_{t=1}^n V_t \left(\one_{q_V(1 - \delta) \leq V_t \leq \hat{q}_V(1 - \delta_n)} + \one_{\hat{q}_V(1 - \delta_n) \leq V_t \leq q_V(1 - \delta)} \right)
	\leq \left[ \sqrt{\frac{\log n}{n}} + (\delta_n - \delta) \right] q_V(1 - \delta/2),
\end{align*}
which indeed converges to $0$.

\bibliographystyle{plainnat}
\bibliography{these}

\end{document}